\def\E{\ifmmode{\mathbb E}\else{$\mathbb E$}\fi} 
\def\N{\ifmmode{\mathbb N}\else{$\mathbb N$}\fi} 
\def\R{\ifmmode{\mathbb R}\else{$\mathbb R$}\fi} 
\def\Q{\ifmmode{\mathbb Q}\else{$\mathbb Q$}\fi} 
\def\C{\ifmmode{\mathbb C}\else{$\mathbb C$}\fi} 
\def\H{\ifmmode{\mathbb H}\else{$\mathbb H$}\fi} 
\def\Z{\ifmmode{\mathbb Z}\else{$\mathbb Z$}\fi} 
\def\P{\ifmmode{\mathbb P}\else{$\mathbb P$}\fi} 
\def\T{\ifmmode{\mathbb T}\else{$\mathbb T$}\fi} 
\def\SS{\ifmmode{\mathbb S}\else{$\mathbb S$}\fi} 
\def\DD{\ifmmode{\mathbb D}\else{$\mathbb D$}\fi} 
\newcommand{\del}{\partial}
\newcommand{\Cont}{{\operatorname{Cont}}}
\newcommand{\ben}{\begin{enumerate}}
\newcommand{\een}{\end{enumerate}}
\newcommand{\be}{\begin{equation}}
\newcommand{\ee}{\end{equation}}
\newcommand{\bea}{\begin{eqnarray}}
\newcommand{\eea}{\end{eqnarray}}
\newcommand{\beastar}{\begin{eqnarray*}}
\newcommand{\eeastar}{\end{eqnarray*}}
\newcommand{\bc}{\begin{center}}
\newcommand{\ec}{\end{center}}
\theoremstyle{theorem}
\newtheorem{thm}{Theorem}[section]
\newtheorem{cor}[thm]{Corollary}
\newtheorem{lem}[thm]{Lemma}
\newtheorem{prop}[thm]{Proposition}
\newtheorem{sublem}[thm]{Sublemma}
\theoremstyle{definition}
\newtheorem{defn}[thm]{Definition}
\newtheorem{rem}[thm]{Remark}
\newtheorem{exm}[thm]{Example}
\newtheorem{choice}[thm]{Choice}
\newtheorem{notation}[thm]{\rm\bfseries{Notation}}
\newtheorem*{thm*}{Theorem}
\numberwithin{equation}{section}
\def\R{{\mathbb R}}
\def\Crit{{\hbox{Crit}}}
\def\E{{\mathbb E}}
\def\Z{{\mathbb Z}}
\def\C{{\mathbb C}}
\def\R{{\mathbb R}}
\def\P{{\mathbb P}}
\def\N{{\mathbb N}}
\def\11{{\mathbb I}}
\def\C{\mathbb{C}}
\def\Z{\mathbb{Z}}
\def\T{\mathbb{T}}
\def\Q{\mathbb{Q}}
\def\X{\mathbb{X}}
\def\E{\ifmmode{\mathbb E}\else{$\mathbb E$}\fi} 
\def\N{\ifmmode{\mathbb N}\else{$\mathbb N$}\fi} 
\def\R{\ifmmode{\mathbb R}\else{$\mathbb R$}\fi} 
\def\Q{\ifmmode{\mathbb Q}\else{$\mathbb Q$}\fi} 
\def\C{\ifmmode{\mathbb C}\else{$\mathbb C$}\fi} 
\def\H{\ifmmode{\mathbb H}\else{$\mathbb H$}\fi} 
\def\Z{\ifmmode{\mathbb Z}\else{$\mathbb Z$}\fi} 
\def\P{\ifmmode{\mathbb P}\else{$\mathbb P$}\fi} 
\def\SS{\ifmmode{\mathbb S}\else{$\mathbb S$}\fi} 
\def\DD{\ifmmode{\mathbb D}\else{$\mathbb D$}\fi} 
\def\R{{\mathbb R}}
\def\Crit{{\hbox{Crit}}}
\def\E{{\mathbb E}}
\def\Z{{\mathbb Z}}
\def\C{{\mathbb C}}
\def\R{{\mathbb R}}
\def\N{{\mathbb N}}
\def\CA{{\mathcal A}}
\def\CE{{\mathcal E}}
\def\CL{{\mathcal L}}
\def\CP{{\mathcal P}}
\def\CP{{\mathcal P}}
\def\CV{{\mathcal V}}
\def\tildeX{{\widetilde X}}
\def\tildeP{{\widetilde P}}
\def\tildegamma{{\widetilde \gamma}}
\def\darr#1{\raise1.5ex\hbox{$\leftrightarrow$}
\mkern-16.5mu #1}
\def\roughly#1{\raise.3ex\hbox{$#1$\kern-.75em
\lower1ex\hbox{$\sim$}}}
\def\opname#1{\mathop{\kern0pt{\rm #1}}\nolimits}
\def\dim{\opname{dim}}
\def\supp{\operatorname{supp}}
\def\Dev{\operatorname{Dev}}
\def\leng{\operatorname{leng}}
\def\span{\operatorname{span}}
\def\Cont{\operatorname{Cont}}
\def\Crit{\operatorname{Crit}}
\def\Spec{\operatorname{Spec}}
\def\Sing{\operatorname{Sing}}
\def\GFQI{\frak{G}}
\def\Index{\operatorname{Index}}
\def\Image{\operatorname{Image}}
\def\ev{\operatorname{ev}}
\DeclareFontFamily{U}{MnSymbolC}{}
\DeclareSymbolFont{MnSyC}{U}{MnSymbolC}{m}{n}
\DeclareFontShape{U}{MnSymbolC}{m}{n}{
    <-6>  MnSymbolC5
   <6-7>  MnSymbolC6
   <7-8>  MnSymbolC7
   <8-9>  MnSymbolC8
   <9-10> MnSymbolC9
  <10-12> MnSymbolC10
  <12->   MnSymbolC12}{}
\DeclareMathSymbol{\intprod}{\mathbin}{MnSyC}{'270}
\begin{document}

\quad \vskip1.375truein

\def\mq{\mathfrak{q}}
\def\mp{\mathfrak{p}}
\def\mH{\mathfrak{H}}
\def\mh{\mathfrak{h}}
\def\ma{\mathfrak{a}}
\def\ms{\mathfrak{s}}
\def\mm{\mathfrak{m}}
\def\mn{\mathfrak{n}}
\def\mz{\mathfrak{z}}
\def\mw{\mathfrak{w}}
\def\Hoch{{\tt Hoch}}
\def\mt{\mathfrak{t}}
\def\ml{\mathfrak{l}}
\def\mT{\mathfrak{T}}
\def\mL{\mathfrak{L}}
\def\mg{\mathfrak{g}}
\def\md{\mathfrak{d}}
\def\mr{\mathfrak{r}}
\def\Cont{\operatorname{Cont}}
\def\Crit{\operatorname{Crit}}
\def\Spec{\operatorname{Spec}}
\def\Sing{\operatorname{Sing}}
\def\GFQI{\text{\rm GFQI}}
\def\Index{\operatorname{Index}}
\def\Cross{\operatorname{Cross}}
\def\Ham{\operatorname{Ham}}
\def\Fix{\operatorname{Fix}}
\def\Graph{\operatorname{Graph}}
\def\id{\text{\rm Id}}
\def\Exp{\operatorname{Exp}}

\title[Canonical generating function]
{Contact action functional, calculus of variations and 
canonical generating functions of Legendrian submanifolds}

\author{Yong-Geun Oh, Seungook Yu}
\address{Center for Geometry and Physics, Institute for Basic Science (IBS),
77 Cheongam-ro, Nam-gu, Pohang-si, Gyeongsangbuk-do, Korea 37673
\& POSTECH, Gyeongsangbuk-do, Korea}
\email{yongoh1@postech.ac.kr}
\address{CNRS, Laboratoire de Mathématiques Jean Leray, Nantes Université, 
2 Chemin de la Houssinière, 44322 Nantes, France}
\curraddr{POSTECH,
77 Cheongam-ro, Nam-gu, Pohang-si, Gyeongsangbuk-do, Korea 37673}
\email{seungook.yu@univ-nantes.fr,yso1460@postech.ac.kr}
\thanks{This work is supported by the IBS project \# IBS-R003-D1}

\date{November 26, 2023; revised on June 19, 2025}

\begin{abstract}
In the present paper, we formulate a contact analogue on the one-jet bundle $J^1B$
of Weinstein's observation which says the classical action functional on the cotangent bundle 
is a generating function of any
Hamiltonian isotope of the zero section. We do this by identifying the correct action functional
which is defined on the space of Hamiltonian-translated (piecewise smooth) \emph{horizontal}
curves of the contact distribution, which we call the \emph{Carnot path space}. 
Then we give a \emph{canonical construction} of
generating functions of Legendrians in the 1-jet bundle (contact isotopic to the zero section),
 which is the Legendrian counterpart of
Laudenbach-Sikorav's canonical construction of the
generating function of Hamiltonian isotope
of the zero section on the cotangent bundle which utilizes
a finite dimensional approximation of the action functional.

Motivated by this construction, we develop a Floer theoretic construction of spectral invariants
for the Legendrian submanifolds in the sequel \cite{oh-yso:spectral} which is the contact analogue
to the construction given in \cite{oh:jdg,oh:cag} for the Lagrangian submanifolds in the cotangent bundle.
\end{abstract}

\keywords{One-jet bundle, Legendrian generating function, translated horizontal paths, effective (contact) action functional,
broken trajectory approximation, Weinstein's de-approximation}

\subjclass[2020]{Primary 53D10, 37J51; Secondary 53D35, 37J37, 37J11}

\maketitle

\tableofcontents

\section{Introduction}

\emph{Generating functions} play an important role in
Hamiltonian mechanics and in micro-local analysis, especially in
H\"ormander's calculus of Fourier integral operators \cite{hormander:fourier}.
This construction can be seen as  a particular case of more general push-forward operations
in the calculus of Lagrangian submanifolds. More recently this calculus
of Lagrangian submanifolds has received much attention in relation to
functorial constructions of morphisms and operations in
the Fukaya category \cite{ww-3}. 

Chaperon \cite{chaperon} and Laudenbach-Sikorav \cite{laud-sikorav} used
the \emph{broken geodesic} approximation of
the action functional in the proof of Arnold's conjecture in the
cotangent bundle. Sikorav \cite{sikorav:cmh} identified this approximation 
as a canonical construction of a generating function (quadratic at infinity), and 
also used the calculus of Lagrangian submanifolds (and the
corresponding generating functions) to put the construction of \cite{laud-sikorav}
in a natural setting. (See also  \cite{alan:lecture} and \cite{chekanov:generating}.) 
This technique of generating functions was then
culminated by Viterbo \cite{viterbo:generating} into a construction of some symplectic invariants. 
(See also \cite{traynor}.)
In this construction, the notions of \emph{generating functions quadratic
at infinity} (abbreviated as $\GFQI$) and stable Morse theory play an important role.
(See \cite{eliash-gromov:stable} for a detailed exposition on this aspect.) 
Kragh has been making a series of homotopical constructions to the level of a construction of Thom spectra
by the technique of generating functions and applied them to the nearby Lagrangian conjecture 
\cite{kragh:nearby1,kragh:nearby2}. (See also the joint works with others
\cite{abouzaid-kragh:jtop,abouzaid-kragh:acta,ACGK,AA-GCK}.)

In \cite{oh:jdg,oh:cag}, the first named author utilizes Lagrangian intersection Floer theory to 
define Floer theoretic Lagrangian spectral invariants on the cotangent bundle whose coincidence 
with Viterbo's $\GFQI$ spectral invariants \cite{viterbo:generating} (up to a uniform addition by constant) 
is proven by Milinkovi\'c \cite{milinkovic1,milinkovic2}. The starting point of the construction in \cite{oh:jdg}
is Weinstein's ingenious observation \cite{alan:observation}, which we call \emph{Weinstein's de-approximation},
that the classical Hamilton's action functional, denoted by $\CA_H^{\text{\rm cl}}$,
 is a generating function of
the time-one image $\phi_H^1(o_{T^*B})$ of the zero section of $T^*B$ under the Hamiltonian flow of $H = H(t,x)$.
(See Section \ref{sec:effective}  for the details.)
\begin{rem}[Concerning Weinstein's de-approximation]  At this point it might be useful to set straight the history around
Laudenbach-Sikorav's finite dimensional construction and Weinstein's infinite dimensional construction
which we call `Weinstein's de-approximation'. (See Section \ref{sec:alan-observation} for 
details.) 
When the preprint of Laudenbach-Sikorav's 
paper \cite{laud-sikorav} first came out at the end of 1984, Oh was Weinstein's graduate student. 
(Both Weinstein and Oh studied the preprint before its publication.) In the fall of 1987,  
while Oh was still a graduate student, Weinstein discovered a direct variational explanation 
of the essence of what Laudenbach-Sikorav did in \cite{laud-sikorav}. In one of 
the lectures given in his graduate course \cite{alan:observation}, he
explained his precise formulation of \emph{the Morse family framework} on
the space of paths \emph{as described in the present paper}.
He called his variational construction the \emph{de-approximation of Laudenbach-Sikorav's broken-trajectory approximation} at that time. 
Then in \cite{oh:jdg}, Oh utilized 
this Weinstein's observation as the starting point of his Floer theoretic construction of
Viterbo's generating function invariants \cite{viterbo:generating}, and 
called it `Weinstein's de-approximation'.
After then, Miliknovi\'c \cite{milinkovic1,milinkovic2}
employed this connection between the generating function and the action functional 
in his study of the equivalence between the stable Morse homology of GFQI
 \cite{viterbo:generating,traynor} and the Lagrangian intersection Floer homology \cite{oh:jdg} 
 of the Hamiltonian isotope of the zero section of $T^*B$.
 \end{rem}
The main purpose of the present paper is to formulate the Legendrian counterpart thereof
and provide a canonical construction of $\GFQI$ for any contact Hamiltonian isotope of the
zero section of a one-jet bundle $J^1B$.
This is also the first step of the sequel \cite{oh-yso:spectral} where the present authors construct
the Legendrian counterpart on the one-jet bundle
of the first named author's Floer theoretic construction of Lagrangian spectral invariants on the cotangent bundle.
We believe that the invariants coincide with the $\GFQI$ spectral invariants constructed by
Th\'eret in \cite{theret}, Bhupal \cite{bhupal} and Sandon \cite{sandon:homology} following Viterbo's scheme of stable Morse theory.
The main step of the $\GFQI$ invariants is the existence and certain uniqueness results of the generating function
quadratic at infinity which are established in \cite{laud-sikorav}, \cite{sikorav:cmh} for the existence
and in \cite{viterbo:generating}, \cite{theret} for the uniqueness.
(See Section \ref{sec:gfqi-generation}  for some quick review on the $\GFQI$ construction.)

The known construction of generating functions quadratic at infinity for Legendrian submanifolds in
the one-jet bundle \cite{chaperon}, \cite{chekanov:generating}, \cite{ferrand} is rather different from the natural construction 
of Laudenbach-Sikorav which uses a Hamiltonian variation of Bott's finite dimensional geodesic approximation 
in his Morse theoretic study of the topology of loop spaces. 
 This construction \emph{{in the chain level}}
 depends on the rank $N$ of the associated vector bundle
such that the associated stable Morse homology has the limit as $N \to \infty$ 
relating to Floer homology for the action functional. (See \cite{milinkovic1,milinkovic2}.)
This chain level study is fundamental for 
the aforementioned Kragh's homotopical construction above.

It turns out that finding the contact counterpart of the aforementioned Weinstein's de-approximation of 
Laudenbach-Sikorav's construction of a `canonical generating function'
described in \cite{alan:observation} is rather nontrivial largely because
the commonly used action functional carries large degeneracy in its variational study arising from
the whole set of reparametrizations of the curves. To remove this degeneracy and rigidify the relevant
calculus of variations, we need to put some constraints. For this purpose,
we need to do some general study of the space of \emph{(translated) horizontal paths} of
the contact distribution of a contact manifold $(M,\xi)$ and get into a little bit
of Carnot-Carath\'eodory geometry. (See \cite{mitchell}
for the relevant results and \cite{montgomery} for a nice panoramic exposition on Carnot-Carath\'eodory
geometry.)

In the sequel \cite{oh-yso:spectral}, utilizing these background geometric preparations 
and analysis of (perturbed) contact instantons established in 
\cite{oh:contacton-Legendrian-bdy, oh:entanglement1, oh:contacton-transversality},
we carry out the Floer theoretic construction of Legendrian spectral invariants as
mentioned above, which is the contact counterpart of the construction given in \cite{oh:jdg,oh:cag}.

\subsection{The space of Hamiltonian-translated horizontal paths}

Let $(M,\xi)$ be a contact manifold. 
A contact diffeomorphism, abbreviated as a contactomorphism,
$\psi: M \to M$ is a diffeomorphism satisfying
$d\psi(\xi) \subset \xi$, and a contact vector field $X$ is one whose flow 
generates an isotopy of contactomorphisms. Equivalently, $X$ is contact if and only if 
the Lie derivative
$\CL_XY = [X,Y]$ is tangent to $\xi$ whenever the vector field $Y$ is tangent to $\xi$.

Now let $\xi$ be coorientable which we will 
assume from now on. Then we choose a contact form $\lambda$, i.e., one form with $\xi = \ker \lambda$.
A choice of contact form induces a decomposition
\be\label{eq:lambda-decompos}
TM = \xi \oplus \R \langle R_\lambda \rangle
\ee
where $R_\lambda$ is the $\lambda$-Reeb vector field uniquely determined by the equations
$$
R_\lambda \intprod \lambda = 1, \quad R_\lambda \intprod  d\lambda = 0.
$$
The choice of contact form also enables us to associate a natural $\R$-valued function
given by $H = -\lambda(X)$ to \emph{any} vector field.

Conversely, for any given $\R$-valued 
 function $H$,
we can associate a \emph{contact vector field} by the defining equation
\be\label{eq:XH}
\begin{cases}
X \intprod \lambda = -H \\
X  \intprod d\lambda = dH - R_\lambda[H]\lambda.
\end{cases}
\ee
(See \cite[Theorem 2.3]{geiges} for the proof, for example, but with different sign convention
of $H: = \lambda(X)$. In the present paper, we systematically follow the sign convention 
of \cite{BCT,dMV,oh:contacton-Legendrian-bdy} in our sometimes rather elaborate calculations. 
We refer the readers to Section \ref{sec:preliminaries} for a summary of contact Hamiltonian calculus 
in our convention.  To avoid readers' confusion arising
from this difference of conventions from other literature, 
we make our contact Hamiltonian calculus self-contained.)

The contact Hamilton's equation $\dot y = X_H(t,y)$ can be rewritten as
\be\label{eq:contact-Hamilton's-eq}
\begin{cases}
(\dot \gamma - X_H(t,\gamma(t)))^\pi = 0 \\
\gamma^*\lambda + H(t,\gamma(t))\, dt = 0
\end{cases}
\ee
 by splitting the equation into the contact distribution direction and the Reeb direction. This decomposition
 suits well with the analytic study of perturbed contact instanton equation
as presented in \cite{oh:contacton-Legendrian-bdy,oh:entanglement1}.
A particular type of perturbed action functional is introduced by the first-named author
in \cite{oh:contacton-Legendrian-bdy}, \cite{oh:entanglement1}  for the variational study of
this equation under the general Legendrian boundary condition in any contact manifold.
\begin{rem}
We would like to emphasize that readers should take our sign convention 
\cite{BCT,dMV,oh:contacton-Legendrian-bdy}  (or others, if any) \emph{as a whole
package} which should be consistently used in the various calculations we involve for the purpose of
 the present paper. 
\end{rem}

By definition of (coorientation-preserving) contactomorphism $\psi$ of a coorientable contact structure
$\xi$ equipped with a contact form $\lambda$, we can write $\psi^*\lambda = f \lambda$ for
some positive function $f = f_\psi^\lambda$. Writing $g = g_\psi^\lambda := \log f$, which we call
the \emph{conformal exponent} of contactomorphism $\psi$, we have
$$
\psi^*\lambda = e^{g_\psi} \lambda.
$$

\begin{notation} Let $H = H(t,y)$ be a contact Hamiltonian. We denote by $\psi_H^t$ the contact Hamiltonian
flow of $H$. We denote by $\phi_H^t$ the isotopy defined by
\be\label{eq:phiHt}
\phi_H^t: = \psi_H^t (\psi_H^1)^{-1}.
\ee
\end{notation}

We remark that $\phi_H^1 = \id$ and $\phi_H^0 = (\psi_H^1)^{-1}$. 
The curve  $\gamma$ defined by $\gamma(t) := \phi_H^t(y)$ solves the final-value problem
$$
\dot \gamma = X_H(\gamma), \quad  \gamma(1) = y
$$
for the given point $y$.
(See \cite{oh:cag} for a systematic discussion 
on this point of view.)

\begin{defn}[Perturbed action functional]
Let $H = H(t,y)$ be a contact Hamiltonian and denote by $\CL(M)$ the free path space, i.e., the set of 
smooth maps $\gamma: [0,1] \to M$. We
 define the functional $\CA_{H;\lambda}: \CL(M) \rightarrow \R$  by 
\bea\label{eq:perturbed-action}
\CA_{H;\lambda}(\gamma) =  \int_0^1 e^{g_{(\phi_H^t)^{-1}} (\gamma(t))} 
\left(\lambda(\dot \gamma (t)) + H_t (\gamma (t))\right) \, dt.
\eea
\end{defn}
Note that, when $H = 0$, we have
$$
\CA_{0;\lambda} (\gamma): = \int_0^1 \gamma^* \lambda
$$
which is the standard contact action functional for the Reeb flow. Since we will
not vary $\lambda$, we will simply omit the dependence of $\lambda$ and write $\CA_H$ and $\CA_0$
respectively.

\begin{rem} \begin{enumerate}
\item 
Under the assignment
$$
\Phi_H(\gamma)(t) = (\phi_H^t)^{-1}(\gamma(t)) = \overline\gamma(t),
$$
the map $\Phi_H$ transforms paths with Legendrian boundary condition 
 from $(R_0,R_1)$ to $(\psi_H(R_0),R_1)$ of $\gamma$ at $t = 0, \, 1$ for a given pair of
 Legendrian submanifolds $(R_0,R_1)$. (See \eqref{eq:PhiH} for the precise definition.)
\item In fact, one can associate an action functional to each time-dependent family of contact forms
$\{\lambda^s\}_{s \in [0,1]}$ by
$$
\CA_{\{\lambda^{(\cdot)}\}}(\gamma): = \int_0^1 \lambda^t(\dot \gamma(t))\, dt
$$
which we will again simply write as $\CA_0$ for the simplicity of notation.
The action functional $\CA_H$ is obtained by considering the family given by
$$
\lambda^s = ((\phi_H^s)^{-1})^*\lambda = (\phi_H^s)_*\lambda.
$$
See Lemma \ref{lem:CAHu=CAw}.
\end{enumerate}
\end{rem}

We attract readers' attention to the fact that
the value of $\CA_H$ at any Hamiltonian trajectory is \emph{automatically} zero which
exhibits the nature of \emph{constrained variation} 
with the contact Hamiltonian trajectories in the point of view of the calculus of variation.  
(See the discussion of Section \ref{sec:Carnot-pathspace}, especially Theorem \ref{thm:Carnot-Frechet},
 for  further elaboration.)
This has led us to consider the  restriction
of $\CA_H$ to the following Carnot-type path space.

\begin{defn}[Translated horizontal paths]
For each given Hamiltonian $H = H(t,y)$, we define the
subset
$$
\CL^{\text{\rm Carnot}}(M,H) \subset \CL(M)
$$
to be the set of piecewise smooth continuous curves $\gamma:[0,1] \to M$ that satisfy 
\be\label{eq:Carnot-path}
\gamma^*\lambda + H(t,\gamma(t)) \, dt = 0.
\ee
We call an element $\gamma$ therein an \emph{$H$-translated horizontal path} of the contact
distribution $\xi$, and the set of such paths the \emph{$H$-Carnot path space}.
\end{defn}

\begin{rem} Recall that in thermodynamics the notion of a \emph{Carnot path} is a finite succession of
paths which are alternately adiabatic and isothermal. Such a path belongs to the realm of piecewise
smooth  horizontal paths for the thermodynamic contact form, e.g.,
$\lambda = dU - TdS + PdV$ in $J^1\R^2 \cong \R^5$. This is the reason
for our naming of (Hamiltonian-translated) Carnot path space here.
\end{rem}

In fact, if we define $\overline \gamma(t): = (\phi_H^t)^{-1}(\gamma(t))$, \eqref{eq:Carnot-path} is
equivalent to saying that $\overline \gamma$ is tangent to the contact distribution, i.e., that it satisfies
\be\label{eq:horizontal}
\frac{d \overline \gamma}{dt}\in \xi.
\ee
In Carnot-Carath\'eodory geometry, such a path $\overline \gamma$ is called a 
\emph{horizontal curve} of the distribution $\xi$. 
Since the contact distribution satisfies the 
H\"ormander's condition \cite{hormander:hypoelliptic}, it is an immediate consequence of the general result
by Chow \cite{chow} that $\CL^{\text{\rm Carnot}}(M,H)$ is \emph{locally transitive} in the sense that any point
$p\in M$ can be joined to a given point $m \in M$, provided $p$ is sufficiently close to $m$.
(See \cite{mitchell} for a quick summary of the Carnot-Carath\'eodory geometry needed for our discussion.
For readers' convenience, we provide some quick discussion on the Carnot-Carath\'eodory metrics
on $M$ equipped with a distribution that satisfies H\"ormander's condition 
\cite{hormander:hypoelliptic} and its relationship
with the above translated Carnot paths in Appendix \ref{sec:Carnot-horizontal}.)

However, a priori it is not clear whether this subset carries a smooth structure in the sense of
an infinite dimensional Fr\'echet manifold on which we can do calculus of variation.
In this regard, we prove the following.

\begin{thm}\label{thm:Carnot} For any smooth Hamiltonian $H =H(t,y)$, the subset
$$
\CL^{\text{\rm Carnot}}(M,H) \subset \CL(M)
$$
is a smooth submanifold of the free path space $\CL(M)$ and so carries the induced Fr\'echet manifold structure.
\end{thm}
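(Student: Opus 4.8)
The plan is to realize $\CL^{\text{\rm Carnot}}(M,H)$ as the zero set of a smooth submersion from $\CL(M)$ into a fixed Fr\'echet space, and to conclude by the implicit function theorem; the only genuine wrinkle will be that $\CL(M)$ is a Fr\'echet, not a Banach, manifold, which I postpone to the end.

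Fix the partition $0=t_0<\cdots<t_N=1$ underlying the piecewise smooth structure on $\CL(M)$, and let $W$ be the Fr\'echet space of functions $[0,1]\to\R$ smooth on each $[t_{i-1},t_i]$, allowing (in general) distinct one-sided limits at the breakpoints $t_i$. I would study
\[
F_H:\CL(M)\longrightarrow W,\qquad F_H(\gamma)(t):=\lambda_{\gamma(t)}(\dot\gamma(t))+H(t,\gamma(t)),
\]
for which $F_H^{-1}(0)=\CL^{\text{\rm Carnot}}(M,H)$ by \eqref{eq:Carnot-path}. (Equivalently, $\Phi_H$ identifies $F_H^{-1}(0)$ with the space of genuine horizontal curves $\{\overline\gamma:\dot{\overline\gamma}\in\xi\}$, cf.\ \eqref{eq:horizontal}; I keep $H$ in the picture to avoid flow-completeness hypotheses on $\Phi_H$.) That $F_H$ is smooth is routine: in the exponential charts of $\CL(M)$ it is a composition of the $1$-jet extension $\gamma\mapsto(\gamma,\dot\gamma)$ with the substitution operator of the smooth function $(t,y,v)\mapsto\lambda_y(v)+H(t,y)$ on $[0,1]\times TM$.

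The substance is the linearization together with a right inverse. For $\gamma\in\CL(M)$ and a vector field $\eta$ along $\gamma$, write $\eta=\partial_s\Gamma|_{s=0}$ for a variation $\Gamma(t,s)$; from $\Gamma^*\lambda=A\,dt+B\,ds$, $A=\lambda(\partial_t\Gamma)$, $B=\lambda(\partial_s\Gamma)$, the structure identity $d(\Gamma^*\lambda)=\Gamma^*d\lambda$ gives $\partial_sA=\partial_tB-d\lambda(\partial_t\Gamma,\partial_s\Gamma)$, hence
\[
dF_{H,\gamma}(\eta)(t)=\frac{d}{dt}\lambda(\eta(t))-d\lambda(\dot\gamma(t),\eta(t))+dH_t(\eta(t)),
\]
$dH_t$ denoting the fibre differential of $H$ along $\gamma$. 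The key observation is that $dF_{H,\gamma}$ has a bounded linear right inverse \emph{at every} $\gamma$ --- in particular with no nondegeneracy assumption on $\dot\gamma$, which may well vanish somewhere --- obtained by varying in the Reeb direction. For $u\in W$ set $\eta_u(t):=u(t)R_{\gamma(t)}$, where $R$ is the Reeb field of $\lambda$; since $\lambda(R)=1$ and $\iota_Rd\lambda=0$, one gets $dF_{H,\gamma}(\eta_u)=\dot u+b\,u$ on each subinterval, with $b(t):=dH_t(R_{\gamma(t)})$ a fixed piecewise smooth function, continuous across the $t_i$. Hence for any $c\in W$ one solves the linear ODE $\dot u+bu=c$ successively on $[t_0,t_1],[t_1,t_2],\dots$ with $u(t_0)=0$ and matching values at the $t_i$, producing $u\in W$ --- continuous across the $t_i$, with $\dot u$ inheriting the jumps of $c$ --- that depends continuously and linearly on $c$; this is a right inverse $S$ with $dF_{H,\gamma}\circ S=\mathrm{id}_W$. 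Therefore the image of $S$ is a closed complement of $\ker dF_{H,\gamma}$, $F_H$ is a submersion at each point of $F_H^{-1}(0)$, and the implicit function theorem displays $F_H^{-1}(0)$ locally as a graph over $\ker dF_{H,\gamma}$, i.e.\ as a submanifold.

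The genuine obstacle is that this last step uses the implicit function theorem on a Fr\'echet manifold, which is not automatic; I would handle it in one of two standard ways. One is to run the entire argument at the Banach level: for each $k\ge1$ replace $\CL(M)$ by the Banach manifold of piecewise $C^{k+1}$ curves and $W$ by piecewise $C^{k}$ functions; the formula for $dF_{H,\gamma}$ and the operator $S$ persist (now bounded between those Banach spaces, with the usual loss of one derivative), the Banach submersion theorem applies, and since $\CL(M)$ is the inverse limit of the $\CL^{C^{k}}(M)$ with compatible straightening charts at consecutive levels, one obtains a Fr\'echet submanifold chart in the limit. The other is to invoke the Nash--Moser implicit function theorem directly, noting that $dF_{H,\gamma}$ is a tame family of operators and $S$ a tame --- in fact smoothing, Volterra-type --- right inverse, so Hamilton's theorem applies. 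Either way $\CL^{\text{\rm Carnot}}(M,H)$ is a smooth submanifold of $\CL(M)$ carrying the induced Fr\'echet manifold structure. One bookkeeping point runs through everything: at the breakpoints $t_i$ the tangent vectors to $\CL(M)$ (and the auxiliary $u$) are continuous while their $t$-derivatives and the values of $F_H$ may jump there --- which is exactly why the target $W$ must be modelled on piecewise-smooth functions with jumps allowed at the $t_i$.
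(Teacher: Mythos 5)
Your proposal is correct and follows essentially the same route as the paper: both realize $\CL^{\text{\rm Carnot}}(M,H)$ as the zero set of $\gamma\mapsto\lambda(\dot\gamma)+H_t(\gamma)$, compute the same linearization, and obtain surjectivity from variations in the Reeb direction, which reduce the problem to the scalar first-order ODE $\dot u+R_\lambda[H_t]\,u=c$. The only differences are in packaging: you exhibit the bounded (Volterra-type) right inverse directly where the paper first runs an $L^2$-duality (Fredholm alternative) argument and then a smoothing correction, and you spell out the Fr\'echet implicit function theorem step (via Banach levels or Nash--Moser) that the paper leaves implicit.
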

We would like to highlight that \emph{this smooth Fr\'echet manifold structure does not involve
any type of transversality or nondegeneracy hypothesis on the Hamiltonian $H$ 
but holds for any smooth Hamiltonian.}
We alert readers that when $H = 0$ this is nothing but
the set of horizontal curves of the contact distribution $\xi$.

\begin{cor} The set of piecewise smooth horizontal paths of the contact distribution of
$(M,\xi)$ is a Fr\'echet manifold.
\end{cor}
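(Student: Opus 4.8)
The plan is to read off this corollary as the special case $H \equiv 0$ of the Theorem above. First I would observe that for the zero Hamiltonian the contact Hamiltonian flow $\psi_H^t$ is the identity, so by \eqref{eq:phiHt} we get $\phi_H^t = \mathrm{id}$ for every $t$, and hence $\overline\gamma = \gamma$. Consequently the defining equation \eqref{eq:Carnot-path} of $\CL^{\text{\rm Carnot}}(M,0)$ reduces to $\gamma^*\lambda = 0$, that is, $\lambda(\dot\gamma(t)) = 0$ at every parameter $t$ where $\gamma$ is differentiable. Since $\xi = \ker\lambda$, this is exactly the horizontality condition $\dot\gamma \in \xi$ recorded in \eqref{eq:horizontal}. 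Thus, as a subset of $\CL(M)$, the space $\CL^{\text{\rm Carnot}}(M,0)$ is nothing but the set of piecewise smooth horizontal paths of the contact distribution $\xi$.

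With this identification in hand, the second step is simply to apply the Theorem with $H \equiv 0$. The zero function is a smooth Hamiltonian, and the Theorem requires \emph{no} transversality or nondegeneracy hypothesis, so it follows at once that $\CL^{\text{\rm Carnot}}(M,0)$ is a smooth submanifold of $\CL(M)$ and therefore carries the induced Frechet manifold structure. Combined with the first step, this is precisely the assertion of the corollary.

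I do not expect any genuine obstacle here: the full analytic content is carried by the Theorem, and the only point that needs to be checked is the elementary identification of the $H = 0$ Carnot path space with the horizontal path space carried out in the first step. It may nevertheless be worth recording that, although the defining equation is written in terms of an auxiliary contact form $\lambda$ for the cooriented distribution, the resulting subset is independent of that choice: replacing $\lambda$ by $e^h\lambda$ for a smooth function $h$ leaves the zero locus of the map $\gamma \mapsto \lambda(\dot\gamma)$ unchanged. Hence the Frechet manifold structure on the space of piecewise smooth horizontal paths of $\xi$ is intrinsic to the contact manifold $(M,\xi)$, consistent with the fact that the statement of the corollary refers only to $\xi$.
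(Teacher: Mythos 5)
Your proposal is correct and follows exactly the route the paper intends: the corollary is stated as the immediate specialization $H \equiv 0$ of the preceding theorem, with the identification of $\CL^{\text{\rm Carnot}}(M,0)$ with the space of horizontal paths being precisely the observation the authors make just before the corollary ("when $H = 0$ this is nothing but the set of horizontal curves of the contact distribution $\xi$"). Your added remark on independence of the choice of contact form $\lambda$ is a harmless and correct bonus, not needed for the argument.
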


\subsection{Morse family and generation of Legendrian submanifolds}
\label{subsec:Morsefamily}

To facilitate our discussion, especially when we deal with the infinite dimensional case 
of the action functional,
 we start by considering the following general notion of \emph{Morse family}
in the theory of \emph{generating functions}. (See \cite{alan:family,lees} for the earlier usage 
of generating functions defined on the general fiber bundles
and  \cite[Section 3, Definition 1]{castrillon-ratiu}  for the explicitly written
definition of the term \emph{Morse family}  in the sense of Definition \ref{defn:Morsefamily}
below for the finite dimensional case, and see 
\cite{do-oh:reduction} for the treatment of the infinite dimensional case.)

\begin{rem} See Section \ref{sec:gfqi-generation} for the definition and properties of
generating functions.  Mostly in the literature, especially in relation to the study of
symplectic topology of cotangent bundles (resp. to that of contact topology of one-jet bundles),
generating functions defined on a \emph{vector bundle} are considered 
\cite{laud-sikorav}, \cite{sikorav:cmh}, \cite{viterbo:generating}, \cite{traynor}, \cite{theret}, \cite{bhupal}, 
\cite{sandon:homology}. However one can consider a general Morse family 
to construct a priori a more general class of exact Lagrangian (resp. Legendrian) submanifolds than
the Hamiltonian isotopes as shown in \cite{alan:family,lees,castrillon-ratiu}, \cite{do-oh:reduction}.
It is an interesting open problem to see whether this family of exact Lagrangian submanifolds
would contain a counter-example of the nearby Lagrangian conjecture by developing a more
functorial study of the construction utilizing some `additive structures' of the Morse family and 
of the family of associated Lagrangian submanifolds \cite{hormander:fourier,viterbo:generating}.
\end{rem}

Let $\pi:\CE \to B$ be a fiber bundle, where $B$ is finite dimensional but $\CE$ is not
necessarily so. We choose a suitable Ehresmann connection $\Gamma$ for $\pi$, i.e., a splitting
\be\label{eq:Gamma-splitting}
\Gamma: \quad T\CE = VT\CE \oplus HT\CE
\ee
where $VT_e\CE: = (d_e\pi)^{-1}(0)$ is a vertical tangent space at $e$, and $HT_e\CE$ is a horizontal one. 
The derivative $d\pi$ restricts to an isomorphism
\be\label{eq:dpie}
d\pi_e: HT_e \CE \to T_{\pi(e)} B
\ee
at all $e \in E$. 
When we are given a smooth function $S:\CE \to \R$, we consider the diagram
\be\label{eq:relative-Morse}
\xymatrix{\CE  \ar[d]^{\pi} \ar[r]^S & \R \\
B &
}
\ee
and decompose its differential
$dS = d^vS + D^hS$ where we define
$$
d^v S(e) = dS(e)|_{VT_e\CE}, \quad D^hS(e) = dS(e)|_{HT_e \CE}
$$
where the notation indicates the vertical derivative $d^vS$ does not depend on the
connection $\Gamma$ while $D^hS$ does. (See \eqref{eq:fiber-derivative} for precise
description of $d^vS$ as the \emph{fiber derivative} which is defined canonically.)

\begin{rem}
At the moment, we avoid attempting to provide precise meaning of the dual space of an
infinite dimensional space (e.g., in the definition of `cotangent bundle')
which will inevitably involve nontrivial functional analytical issues.
(See \cite{kriegl-michor} for an excellent exposition on such matters.)
Therefore readers had better take the current discussion in the formal level (as in the 
variational calculus) to motivate further discussion below. 
For the readers who feel uneasy of the formal discussion, we refer them 
to  \cite[Appendix B]{do-oh:reduction} for some detailed rigorous discussion 
on this dimension counting in the infinite dimensional fibration $\CE \to B$.
\end{rem}

We define
$$
\Sigma_{S}: = (d^vS)^{-1}(o_{(VT)^*\CE}) = \{ e \in \CE \mid d^vS(e) = 0\}
$$
where $o_{ (VT)^*\CE}$ is the zero section 
of the bundle $(VT)^*\CE \to \CE$ dual to $VT\CE \to \CE$.
We  then define a map $\iota_S: \Sigma_{S} \to J^1B$ by
\bea
\iota_{S}(e) &:= &  \left(\pi(e), D^hS(e) \circ (d\pi_e|_{HT_e\CE })^{-1}, S(e)\right) \label{eq:iotaCS-intro}\\
& = & \left(\pi(e), dS(e) \circ (d\pi_e|_{T_e \CE})^{-1}, S(e)\right)
\eea
where the latter identity follows since $e \in \Sigma_S$.  It shows that the definition of
the map $\iota_S$ involves an Ehresmann connection but its image
does not depend on the choice of an Ehresmann connection.

\begin{rem}\label{rem:nonsmooth}
\begin{enumerate}
\item 
We remark that the image of $\iota_S$  could a priori be a pathological space
by two reasons: Firstly $\Sigma_{S}$ can be a pathological space
unless $S$ satisfies suitable transversality hypothesis. Secondly even if
$\Sigma_S$ carries a smooth structure, the map $\iota_S$ can be singular.
However both maps, $d^vS: E \to E^*$ and 
$\iota_S:  \Sigma_S = (d^vS)^{-1}(0)  \subset E \to J^1B$, 
are defined one after the other for any differentiable function $S$ 
\emph{without any kind of transversality condition}. 
We denote by $R_S$ the image,  $\iota_S(\Sigma_S) \subset J^1B$.
 \item Under the  transversality hypothesis \eqref{eq:transversality-dvCSCF} below, 
$R_S$ becomes the (smooth) \emph{push-forward} of the Legendrian graph
$$
\text{\rm Image} \, j^1S \subset  J^1\CE
$$
under the projection $\pi_\CE: \CE \to B$ in the calculus of 
Legendrian submanifolds. 
\item We recall that under the same transversality hypothesis on $S$, the map 
$i_S: \Sigma_S \to T^*B$ given by
$$
i_S(e): = (D^hS(e) \circ (d\pi_e|_{HT_e\CE})^{-1})
$$
defines a Lagrangian immersion which is the push-forward of $\Graph dS \subset T^*\CE$
under the same projection $\pi_\CE: \CE \to B$ in \emph{calculus of  Lagrangian submanifolds} \cite{alan:lecture}, \cite{ww-3}. The above construction is nothing but the contact analogue of this operation.
(We refer to \cite{oh-park} for the details on the Legendrian correspondence.)
\end{enumerate}
\end{rem}
We now consider the following standard transversality hypothesis
\be\label{eq:transversality-dvCSCF}
d^v S  \pitchfork o_{ (VT)^*\CE}
\ee
This ensures smoothness of $\Sigma_S$.

\begin{defn}[Morse family]\label{defn:Morsefamily}
Consider a smooth fibration $\pi: \CE \to B$ and a smooth function $S : \CE \to \R$
visualized in the diagram \eqref{eq:relative-Morse}. 
We say $S$ is \emph{$\pi$-relative Morse}  with respect to the fibration $\pi: \CE \to B$,
if it satisfies \eqref{eq:transversality-dvCSCF}, or simply a \emph{Morse family} when no 
explicitly mentioning with respect to the fibration is needed.
\end{defn}
A basic result is
that the map $\iota_S : \Sigma_S \to J^1B$ given in \eqref{eq:iotaCS-intro}
defines a Legendrian immersion for any Morse family $S: \CE \to \R$.
(See \cite{hormander:fourier,alan:family,lees} in the symplectic/Lagrangian
 context which automatically imply the contact/Legendrian context,
and \cite[Appendix B]{do-oh:reduction}
for detailed explanation in the infinite dimensional setting.)

\begin{defn} \label{defn:S-generates-R} Let $\CE \to B$ be a fiber bundle and
 $S: \CE \to \R$ be a Morse family. 
We say $S$ \emph{generates} a Lagrangian submanifold $L \subset T^*B$ (resp. a Legendrian submanifold 
$R \subset J^1B$), if $L$ (resp. $R$)  is the image of a Lagrangian immersion $i_S: \Sigma_S \to T^*B$
(resp. a Legendrian immersion $\iota_S: \Sigma_S \to J^1B)$ defined above, 
and call $S$ a \emph{generating function} of  $L$ (resp. of $R$).
\end{defn}

We denote by 
$$
\mathfrak{Leg}(J^1B) \, \text{(resp. $\mathfrak{Leg}_0(J^1B)$)}
$$
the set of  Legendrian submanifolds of $J^1B$ (resp. those Legendrian 
isotopic to the zero section $R_0 = o_{J^1B}$).

\subsection{Action functional as a canonical generating function on $J^1B$}
\label{subsec:action-functional}
Now we restrict contact manifolds to the case of one jet bundles
$J^1B$ of a smooth manifold $B$ equipped with the
standard contact structure $\xi =\ker \lambda$ given by the standard contact form
$\lambda = dz - pdq$.
We then consider the class of Legendrian submanifolds given by the Hamiltonian isotope
 $R = \psi_H^1(o_{J^1B})$ of the contact Hamiltonian $H = H(t,y)$ with $y = (x,z) \in J^1B$. 
We denote by
\be\label{eq:piBpiT*B}
\pi_B: J^1B \to B,\quad \pi_{T^*B}: J^1B \to T^*B, \quad z: J^1B \to \R
\ee
the obvious projections. Then we can write
$$
\lambda = dz - \pi^*\theta, \quad \pi = \pi_{T^*B}
$$
where $\theta = pdq$ is the Liouville one-form on the cotangent bundle $T^*B$.

We also specialize  the action functional \eqref{eq:perturbed-action}
to the case of $J^1B$. As mentioned above
the value of $\CA_H$ at any Hamiltonian trajectory vanishes. Largely because of this,
finding a right formulation of the contact counterpart of Weinstein's de-approximation \cite{alan:observation},
more precisely a \emph{canonical variational construction of a generating function from Hamilton's action
 functional},
is much more nontrivial than expected, whose explanation is now in order.
First of all, we need to consider the following variation of $\CA_H$, which we call
the \emph{effective action functional}, on the aforementioned Carnot-type path space.

\begin{defn}[Effective action functional] We define $\widetilde \CA_H: \CL(J^1B)\to \R$ to be
\bea\label{eq:tildeCAH}
\widetilde{\mathcal{A}}_H(\gamma) & = & - \CA_H (\gamma) + z(\gamma(1)) \nonumber \\
& = & - \int_0^1 e^{g_{(\phi_H^t)^{-1}} (\gamma (t))} (\lambda_{\gamma(t)} (\dot \gamma (t)) + H(\gamma(t))) dt + z(\gamma(1)).
\eea
\end{defn}
We mention that  if $\gamma$ is a Hamiltonian trajectory, the action is reduced to
$$
\widetilde{\mathcal{A}}_H(\gamma) = z(\gamma(1)).
$$
(We refer to Section \ref{sec:effective}, especially Remark \ref{rem:adoption}, 
for the explanation on our adoption of the sign used here and for further expounding 
of the definition of $\widetilde \CA_H$.)

Then we denote by 
$$
\CA _H^{\rm CC} : \CL ^{\rm Carnot} (J^1B,H) \to \R
$$
the restriction of $\widetilde \CA _H$ to $\CL ^{\rm Carnot} (J^1B,H)$ and also call 
it (restricted) effective action functional. (Here `CC' stands for Carnot and Carath\'eodory.)

Then we consider the subset
$$
\CL^{\text{\rm Carnot}}_{0}(J^1B,H;o_{J^1B}) \subset \CL^{\text{\rm Carnot}}(J^1B,H)
$$
consisting of the paths $\gamma \in \CL^{\text{\rm Carnot}}(J^1B,H)$
 satisfying $\gamma(0) \in o_{J^1B}$ in addition. 
  Finally we  consider the fibration
$$
\Pi: \CL_{0}^{\text{\rm Carnot}}(J^1B, H;o_{J^1B}) \to B
$$
with $\Pi = \pi_B \circ \ev_{1}$ for the evaluation map $\ev_{1}$ given by $\ev_{1}(\gamma) = \gamma(1)$.

The following theorem is the contact counterpart of Weinstein's variational construction of
a generating function from Hamilton's action functional in symplectic geometry.
It also proves that the functional $\CA _H^{\rm CC} : \CL ^{\rm Carnot} (J^1B,H;o_{J^1B}) \to \R$
is a Morse family for \emph{any} Hamiltonian $H$.
 
\begin{thm}\label{thm:contact-analog}
Consider the space $\CL_{0}^{\text{\rm Carnot}}(J^1B, H;o_{J^1B})$ and the diagram
$$
\xymatrix{\CL_{0}^{\text{\rm Carnot}}(J^1B, H;o_{J^1B}) \ar[d]^{\Pi} \ar[r]^>>>>>{\CA_H^{\rm CC}} &\R\\
B &
}
$$
for any $H$. Then the following holds:
\begin{enumerate}
\item The functional $\CA_H^{\rm CC} : \CL ^{\rm Carnot} (J^1B,H;o_{J^1B}) \to \R$ satisfies \eqref{eq:transversality-dvCSCF}
with $S$ replaced by $\CA_H^{\rm CC}$.
\item It generates $R = \psi_H^1(o_{J^1B})$ with respect to the fibration 
$$
\Pi: \CL_{0}^{\text{\rm Carnot}}(J^1B, H;o_{J^1B}) \to B.
$$
\end{enumerate}
\end{thm}

\subsection{Broken trajectory approximations and contact Hamiltonian calculus}
\label{subsec:approximation}

After we identify the correct action functional generating the given contact Hamiltonian isotope
of the Legendrian submanifold $o_{J^1B}$, we now apply the broken trajectory approximation
to  give a `canonical' construction of a $\GFQI$ of any Hamiltonian isotope of the zero section 
in the one-jet bundle. Motivated by Theorem \ref{thm:contact-analog} and Chaperon \cite{chaperon}, 
Laudenbach-Sikorav's construction 
\cite{laud-sikorav} in the symplectic case, we perform the contact counterpart of Laudenbach-Sikorav's 
broken trajectory approximation for the effective action functional $\CA_H^{\text{\rm CC}}$.

We start by briefly recalling the construction given in \cite{chaperon,laud-sikorav}. 
To avoid notational complications, we consider the case of $T^*T^n$ as in \cite{chaperon}
and utilize the `linear' structure of $T^n$ in the following summary.
We take a partition
$$
0 = t_0 < t_1 < \cdots < t_N = 1, \quad t_i = \frac{i}{N}
$$
and consider the set of piecewise smooth paths $\ell:[0,1] \to T^*T^n$
with jump at each $t_i$ 
satisfying the initial condition $\ell(0) \in o_{T^*T^n}$, smooth on each semi-open interval
$[t_i,t_{i+1})$ for $i =0, \cdots N-1$ and having the left limit $\lim_{t \nearrow t_{i+1}}\ell(t)$.  
We denote
$$
\ell(t_i) = (q_i^+, p_i^+), \quad \lim_{t \nearrow t_i}\ell(t) = (q_i^-, p_i^-).
$$
Denote $(X_i,P_i) = (q_i^+, p_i^+) -  (q_i^-, p_i^-)$ and regard $(X,P) = \prod_{i=1}^{N-1} (X_i,P_i)$
as an element of $T^n \times (T^*T^n)^{N-1}$ whose dimension is given by $n + 2(N-1) n < \infty$.
To get rid of 
the additional freedom, we specify the path segment $\ell_{[t_i,t_{i+1}]}$ to be
\be\label{eq:elli}
\ell_i(t) = \phi_H^t(q_i^+,p_i^+), \quad t \in [t_i, t_{i+1}].
\ee
Then we denote by $\ell_{(X,P)}$ the corresponding piecewise smooth paths.
We denote the corresponding set of piecewise smooth paths with jumps by $E_N$,
define the projection map $\pi_N: E_N \to T^n$ to be
$$
\pi_N(X,P) = \pi_{T^*T^n}(q_N^-, p_N^-) = \pi_{T^*T^n}\left(\ell_{(X,P)}(1)\right)
$$
and the function $S_N: E_N \to \R$ by 
\be\label{eq:SN}
S_N(X,P) :=  \CA^{\text{\rm cl}}_H(\ell_{(X,P)})
\ee
where $\CA^{\text{\rm cl}}_H$ is Hamilton's action functional given by
$$
\CA^{\text{\rm cl}}_H(\zeta) = \int \zeta^*\theta - H(t,\zeta(t))\, dt
$$
for a path $\zeta:[0,1] \to T^*T^n$ with the Liouville one-form $\theta = p\, dq$ on $T^*T^n$. In this way,
we have a diagram \eqref{eq:relative-Morse} for each given $N$. These being said
the main result of \cite{chaperon,laud-sikorav} is that this diagram generates the 
Lagrangian submanifold $\phi_H^1(o_{T^*T^n})$ for all sufficiently large $N$.

The following point is the main difference  of our construction from that of 
\cite{chaperon}, \cite{laud-sikorav}
in addition to our usage of rather peculiar type of contact action functional $\CA_H^{\text{\rm CC}}$
to handle the large degeneracy of the more standard action functional.

\medskip

\noindent{\bf Upshot:} \emph{One crucial deviation of our broken trajectory approximation
 from that of \cite{chaperon}, \cite{laud-sikorav} is that in the current contact case, 
we also need to insert some additional horizontal trajectories
in between every consecutive pair of contact Hamiltonian trajectories by bisecting
each interval $[t_i,t_{i+1}]$ for $i =1, \cdots, N-1$}. (We refer to Section \ref{sec:approximation} for 
the details. See Figure \ref{fig:broken-traj} for its visualization.)

\medskip

So the relevant broken trajectories are the consecutive unions 
of the concatenated pairs of contact Hamiltonian trajectories and horizontal paths. 
This whole process is canonical modulo the degrees of freedom, again given by 
$$
\dim T^n + 2(N-1) \dim T^n = n+2(N-1)n.
$$

We summarize our construction into the following theorem
somewhat vaguely here,  but the full explanations and details of the algorithm appear
in Section \ref{sec:approximation} - \ref{sec:definitionGFQI} and its proof in Section \ref{sec:proofGFQI}.

\begin{thm} 
Let $B$ be any compact manifold. Then there is a `canonical' algorithm 
of a finite dimensional Morse family \eqref{eq:relative-Morse} with $\CE = E^N$, 
$S = S_N$ and $\pi = \pi_N$ which `approximates'
the effective action functional $\CA_H^{\text CC}$ that gives rise to a
$\GFQI$ of any contact Hamiltonian isotope of the zero section $o_{J^1B}$.
\end{thm}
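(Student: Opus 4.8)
The plan is to run the Laudenbach--Sikorav broken-trajectory machinery with the two structural modifications already flagged: the functional being approximated is the \emph{effective} action $\CA_H^{\rm CC}$ of Theorem~\ref{thm:contact-analog} — which on $\CL^{\rm Carnot}$ reduces to the endpoint height $z(\gamma(1))$, i.e.\ on $\CL_0^{\rm Carnot}$ to the classical-action integral $\int_0^1(\theta(\dot\gamma)-H(t,\gamma))\,dt$ — and the elementary building blocks are \emph{concatenations of a short contact Hamiltonian trajectory of $H$ with a short horizontal path of $\xi$}, rather than Hamiltonian (resp.\ geodesic) arcs alone. First I would fix a uniform partition $0=t_0<t_1<\cdots<t_N=1$ and note that, since $B$ is compact and $H$ is smooth, for $N$ large the elementary contactomorphisms $\psi_H^{t_k}(\psi_H^{t_{k-1}})^{-1}$ are $C^1$-close to $\mathrm{id}_{J^1B}$ and the twist $\phi_H^t$ is slowly varying on each subinterval. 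Using the splitting $\lambda=dz-\pi^*\theta$, I would then single out, for endpoints $m,m'$ close in $J^1B$, a \emph{canonical} short horizontal path of $\xi$ whose $T^*B$-projection is the short geodesic of an auxiliary metric joining $\pi_{T^*B}(m)$ and $\pi_{T^*B}(m')$, mildly perturbed so as to realize an arbitrary prescribed increment $z(m')-z(m)$ through the horizontality identity $\dot z=\theta(\dot\gamma)$, and record the momentum of this arc as a fibre parameter $\eta\in\R^n$ in a chart on $B$.

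Next I would assemble the finite-dimensional model. Through the bijection $\gamma\leftrightarrow\overline\gamma:=(\phi_H^{(\cdot)})^{-1}\circ\gamma$ of Section~\ref{sec:effective} between $\CL^{\rm Carnot}(J^1B,H)$ and horizontal paths, let $\CW_N\subset\CL_0^{\rm Carnot}(J^1B,H;o_{J^1B})$ be the set of $\gamma$ whose $\overline\gamma$ is, on each $[t_{k-1},t_k]$, the concatenation of a short Hamiltonian trajectory and a canonical short horizontal arc as above. I would check that $\CW_N$ is a finite-dimensional submanifold, coordinatized by the initial point $\gamma(0)\in o_{J^1B}\cong B$, the interior nodes $q_1,\dots,q_{N-1}\in B$ and the momenta $\eta_1,\dots,\eta_N\in\R^n$ (the $z$-heights of the nodes being forced recursively by horizontality), that $\Pi|_{\CW_N}:\CW_N\to B$, $\gamma\mapsto\pi(\gamma(1))$, is a submersion, and — the key point — that $F_N:=\CA_H^{\rm CC}|_{\CW_N}$ is itself a generating function of $R=\psi_H^1(o_{J^1B})$ with respect to $\Pi|_{\CW_N}$. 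For the last assertion I would use the broken form of the first-variation computation underlying Theorem~\ref{thm:contact-analog}: fibrewise critical points of $\CA_H^{\rm CC}$ are exactly the genuine $H$-translated horizontal lifts with endpoint on $R$, and each such, being glued from short pieces, already lies in $\CW_N$. This is precisely where inserting the horizontal arcs is indispensable: without them $\CW_N$ would consist of honest Hamiltonian trajectories only and would fail to sweep out all of $R$.

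It then remains to see $F_N$ is quadratic at infinity. By construction the Hamiltonian pieces contribute terms of size $O(1/N)$ depending only on the compact variables $q_0,\dots,q_N$, while the $k$-th horizontal arc contributes, to leading order in $\eta_k$, a term of the bilinear shape $\eta_k\cdot(q_k-q_{k-1})$ together with a remainder bounded in $C^1$ uniformly in $(\eta_1,\dots,\eta_N)$; passing to tangent-vector coordinates $v_k\in T_{q_{k-1}}B\cong\R^n$ adapted to the auxiliary metric and grouping these with the $\eta_k$, the leading part of $F_N$ becomes a fixed nondegenerate (indefinite) quadratic form on the non-compact fibre directions — the discrete avatar of the pairing $\int p\,dq$ with fixed endpoints — after which one invokes the standard cut-off/interpolation step of \cite{laud-sikorav,sikorav:cmh} (see Appendix~\ref{sec:gfqi-generation}) to upgrade $F_N$ to a genuine $\GFQI$ for $R$ without changing the generated Legendrian. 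Finally, comparing the models for $N$ and $2N$ exhibits the passage from $F_N$ to $F_{2N}$ as a fibrewise diffeomorphism followed by stabilization with a nondegenerate quadratic form, which is the precise sense in which the algorithm is `canonical'.

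I expect the genuine obstacle to be the middle step: producing a \emph{canonical} short horizontal connector that simultaneously (i) depends smoothly on its endpoints, (ii) attains every prescribed $z$-increment, and (iii) is adapted to the constrained calculus of variations of $\CA_H^{\rm CC}$ closely enough that $F_N$ still generates \emph{all} of $R$ — equivalently, that every fibrewise critical path of $\CA_H^{\rm CC}$ decomposes into short canonical pieces. Once this compatibility is in place, the quadratic-at-infinity estimate is a bookkeeping exercise parallel to \cite{laud-sikorav,chaperon}, and the $N$-independence is formal.
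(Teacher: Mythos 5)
Your proposal follows essentially the same route as the paper: the same partition into short contact Hamiltonian arcs joined by horizontal lifts of short geodesics in $T^*B$ (with the $z$-increments forced by horizontality), the same finite-dimensional model fibred over the final base point with leading quadratic form $\sum_k\langle P_k,X_k\rangle$, the same identification of vertical critical points with unbroken Hamiltonian trajectories sweeping out $\psi_H^1(o_{J^1B})$, and the same cut-off plus ``almost quadratic at infinity implies equivalent to a $\GFQI$'' step. The only substantive point you underestimate is that the telescoping of $\delta S$ is weighted by the conformal exponents $e^{g_{(k-1)k}}$ of the elementary contactomorphisms, so the critical-point analysis is a genuinely more delicate tensorial computation than in the symplectic case, though it reaches the conclusion you expect.
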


Largely due to the nontrivial presence of \emph{conformal exponent} 
of the contactomorphism unlike the symplectomorphism, the relevant
contact Hamiltonian calculus is significantly more difficult than the symplectic one.
We would like to recommend readers to compare the degree of complexity of the calculations performed here 
and those in \cite{laud-sikorav}.
It makes the computations entering in the critical point analysis of 
the action functional or its broken trajectory approximations much harder than
the symplectic case of \cite{chaperon}, \cite{laud-sikorav}. Unlike the symplectic case,
it also requires us to perform very careful and precise tensorial calculations 
based on the systematic contact Hamiltonian calculus. Such a \emph{systematic calculus}
has been developed and used by the first named author in relation to
the series of  works on contact instantons starting from \cite{oh:contacton-Legendrian-bdy}.
(This calculus is a continuation of those adopted in \cite{BCT} with all the sign conventions 
both in symplectic and contact Hamiltonian calculi which are also compatible with those
listed in {\bf List of Conventions} of \cite{oh:book1} in the symplectic case.)

The organization of the paper is now in order. In Section \ref{sec:preliminaries}, 
we collect some useful results concerning the contact Hamiltonian dynamics
which will enable us to systematically study the calculus of variations of the perturbed
action functional $\CA_H$. 
We mostly follow the exposition of 
\cite{BCT}  on contact Hamiltonian mechanics,
and \cite{oh:contacton-Legendrian-bdy} for a more
extensive systematic exposition on contact Hamiltonian calculus. In Section \ref{sec:Carnot-pathspace}, 
we give the proof of Theorem \ref{thm:Carnot}. After these preliminaries on general contact
Hamiltonian geometry are given, the paper is divided into two parts: Part I contains the formulation 
of the contact analogue of Weinstein's de-approximation, and the proof of Theorem \ref{thm:contact-analog}.
Part II then contains the aforementioned broken trajectory approximation of
the effective action functional $\CA_H^{\text{\rm CC}}$
as a canonical construction of a $\GFQI$ for any Hamiltonian isotope of the zero section 
in the one-jet bundle. Finally we collect some background material in Appendix \ref{sec:Carnot-horizontal}
 and technical calculations in Appendix \ref{sec:deltax} and \ref{sec:invertibility} 
 which are postponed from the main text of the paper.

\bigskip

\noindent{\bf Conventions and Notations:}

\medskip

\begin{enumerate}
\item {(Contact Hamiltonian)} The contact Hamiltonian of a time-dependent contact vector field $X_t$ is
given by
$$
H: = - \lambda(X_t).
$$
We denote by $X_H$ the contact vector field whose associated contact Hamiltonian is given by $H = H(t,y)$, and its flow by
$ \psi^t_H $.
\item When $\psi = \psi^1_H$, we say $H$ generates $\psi$ and write $H \mapsto \psi$.
\item We use the notation $\phi_H^t: = \psi_H^t (\psi_H^1)^{-1}$.
(We warn readers that we also use the same notation $\phi_H^t$ for the symplectic Hamiltonian flow
to be consistent with the notations used in \cite{oh:jdg,oh:cag}. The meaning thereof should be
clear from the context, hopefully.)
\item We will try to consistently use the following notations whenever appropriate:
\begin{itemize}
\item $(q,p,z)$ a point of $J^1B$ or the canonical coordinates thereof
\item $x = (q,p)$ a point of $T^*B$,
\item $y = (x,z)$ a point in $J^1B$,
\item $e$ or $(q,e)$ a point in a vector bundle $E \to B$ with $q = \pi_E(e)$.
\end{itemize}
\item {(Reeb vector field)} We denote by $R_\lambda$ the Reeb vector field for the contact form $\lambda$
and its flow by $\phi^t_{R_\lambda}$.
\item $R$: a Legendrian submanifold.
\item $\CL _0 (J^1B; R)$:  The space of paths $\gamma$ with $\gamma(0) \in R$.
\item $ \CL^\pi_0 (J^1B; R)$: The space of \emph{horizontal paths} $\gamma$ with $\gamma(0) \in R$.
\item $\CL^{\rm Carnot}_0 (J^1B,H; R)$: The space of \emph{$H$-Carnot paths} 
(or simply \emph{Carnot paths}) $\gamma$ with 
$\gamma(0) \in R$.
\end{enumerate}

\section{Preliminaries}
\label{sec:preliminaries}

Let $(M,\xi)$ be a cooriented contact manifold and let $\lambda$ be a
contact form with $\xi = \ker \lambda$. Denote by $\Cont(M,\xi)$ (resp. $\Cont_0(M,\xi)$)
the set of contact diffeomorphisms (resp. the identity component thereof).

\begin{defn} For a given coorientation preserving contact diffeomorphism $\psi$ of $(M,\xi)$
we call the function $g$ appearing in
$$
\psi^*\lambda = e^g \lambda
$$
the \emph{conformal exponent} for $\psi$ and denote it by $g = g_\psi$.
\end{defn}

\begin{defn}\label{defn:Hamiltonian} A vector field $X$ on $(M,\xi)$ is called \emph{contact} if
there exists a smooth function $f: M \to \R$ such that
$$
\CL_X \lambda = f \lambda.
$$
The associated function $H$ defined by
\be\label{eq:contact-Hamiltonian}
H = - \lambda(X)
\ee
is called the \emph{contact Hamiltonian} of $X$. We also call $X$ the
contact Hamiltonian vector field associated to $H$.
\end{defn}
A straightforward calculation shows
$$
f = - R_\lambda[H].
$$
Here we denote by $R_\lambda$ the Reeb vector field of $\lambda$, i.e., the unique vector
field $X$ satisfying $X \rfloor d\lambda = 0, \, X\rfloor \lambda = 1$. It is nothing but the
Hamiltonian vector field associated to the constant function $H \equiv -1$.

The following systematic notation will be useful to designate the Hamiltonian 
to a given contact isotopy as in symplectic topology \cite{oh:hameo1}. As practiced in 
the first-named author's articles  \cite{oh:contacton-Legendrian-bdy,oh:entanglement1},
we adopt the following definition.

\begin{defn}[Developing map]\label{defn:developing-map} Let $\lambda$ be a contact form of $(M,\xi)$. 
We define
\be\label{eq:Devlambda}
\Dev_\lambda: \CP(\Cont(M,\xi)) \to C^\infty([0,T] \times M,\R)
\ee
by assigning its $\lambda$-contact Hamiltonian functions 
\be\label{eq:lambda-contact-Hamiltonian}
\Dev_\lambda(\ell)(t,x): = -\lambda\left(\frac{\del \ell}{\del t}(t,\ell_t^{-1}(x))\right).
\ee
\end{defn}
Unravelling this definition, we have $\Dev_\lambda(\ell)(t,x) = H(t,x)$ if $X_t$ is the
contact vector field as in Definition \ref{defn:Hamiltonian}. 
We denote by $X \mapsto \psi$ if $\psi = \psi_H^1$.

For a given general function $H$, the associated contact Hamiltonian vector field
$X_H$ has decomposition
$$
X_H = X_H^\pi - H R_\lambda \in \xi \oplus \R \langle R_\lambda \rangle
$$
from \eqref{eq:XH} where the projection $X_H^\pi$ to $\xi$ is uniquely determined by the equation
$$
X_H^\pi \rfloor d\lambda = dH - R_\lambda[H] \lambda
$$
or equivalently
\be\label{eq:dH=}
dH = X_H \rfloor d\lambda + R_\lambda[H] \lambda.
\ee
\begin{exm}\label{exm:XH-in-Darboux}
Let $H:\R^{2n+1} \to \R$ be a smooth function on $\R^{2n+1} = J^1\R^n$ (or in Darboux coordinates).
Then the contact Hamiltonian vector field $X_H$ is given by
\bea\label{eq:XH-in-Darboux}
X_H & =& X_H^\pi -H R_\lambda = 
\sum_{i=1}^n \left(\frac{\del H}{\del p_i} \frac{D}{\del q_i} - \frac{D H}{\del q_i} \frac{\del}{\del p_i}\right)
- H \frac{\del}{\del z} \nonumber \\
& = & \sum_{i=1}^n \frac{\del H}{\del p_i} \frac{\del}{\del q_i} -
\left(\frac{\del H}{\del q_i} + p_i \frac{\del H}{\del z}\right)\frac{\del}{\del p_i}
+ \left(\left\langle p, \frac{\del H}{\del p}\right \rangle  - H\right)\frac{\del}{\del z}
\nonumber \\
&{}& 
\eea
where the set $\left\{ \frac{D}{\del q_i}, \frac{\del}{\del p_i}, \frac{\del}{\del z} \right\}$
with $\frac{D}{\del q_i} : = \frac{\del}{\del q_i} + p_i \frac{\del}{\del z}$ is the associated
Darboux frame, i.e., a frame of $TM$ which satisfies 
$$
R_\lambda = \frac{\del}{\del z}, \quad \xi = \span \left\{ \frac{D}{\del q_i}, \frac{\del}{\del p_i}\right\},
$$
and $d\lambda( \frac{D}{\del q_i}, \frac{\del}{\del p_i}) = \delta_{ij}$ for $1 \leq i, \, j \leq n$.
Therefore the associated contact Hamilton's equation is given by
\be\label{eq:Hamilton-eq-Darboux}
\begin{cases}
\dot q_i = \frac{\del H}{\del p_i}, \\
 \dot p_i = -\frac{D H}{\del q_i}  \left(= -\frac{\del H}{\del q_i} - p_i \frac{\del H}{\del z}\right), \\
 \dot z = \left\langle p, \frac{\del H}{\del p}\right \rangle  - H.
 \end{cases}
\ee
\end{exm}

\begin{lem}\label{lem:XH-decompose} Solving contact Hamilton's equation $\dot y = X_H(t,y)$ is equivalent to
finding $\gamma:\R \to M$ that satisfies
\be\label{eq:XH-decompose}
(\dot \gamma - X_H(t,\gamma(t)))^\pi = 0, \quad \gamma^*\lambda + H(t,\gamma(t))\, dt = 0.
\ee
\end{lem}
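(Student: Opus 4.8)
The plan is to use the canonical decomposition $X_H = X_H^\pi - H R_\lambda$ recorded above, together with the basic properties $R_\lambda \rfloor \lambda = 1$ and $\xi = \ker\lambda$. First I would observe that a curve $\gamma$ solves $\dot\gamma = X_H(t,\gamma(t))$ if and only if the vector $v(t) := \dot\gamma(t) - X_H(t,\gamma(t))$ vanishes identically. Since $T_{\gamma(t)}M = \xi_{\gamma(t)} \oplus \R\langle R_\lambda\rangle$, the vanishing of $v(t)$ is equivalent to the simultaneous vanishing of its $\xi$-component $v(t)^\pi$ and of its Reeb-component, i.e. of $\lambda(v(t))$. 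The first condition is precisely $(\dot\gamma - X_H(t,\gamma(t)))^\pi = 0$, the first equation in \eqref{eq:XH-decompose}.

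Next I would identify the Reeb-component condition $\lambda(v(t)) = 0$ with the second equation $\gamma^*\lambda + H(t,\gamma(t))\,dt = 0$. Using $\lambda(X_H) = \lambda(X_H^\pi) - H\,\lambda(R_\lambda) = -H$ (because $X_H^\pi \in \xi$ and $\lambda(R_\lambda) = 1$), we get
$$
\lambda(v(t)) = \lambda(\dot\gamma(t)) - \lambda(X_H(t,\gamma(t))) = \lambda(\dot\gamma(t)) + H(t,\gamma(t)).
$$
On the other hand, by definition of pullback, $\gamma^*\lambda = \lambda(\dot\gamma(t))\,dt$ as a one-form on the interval, so $\lambda(v(t)) = 0$ for all $t$ is exactly the same as $\gamma^*\lambda + H(t,\gamma(t))\,dt = 0$. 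Combining the two components, $\dot\gamma = X_H(t,\gamma(t))$ holds if and only if both equations in \eqref{eq:XH-decompose} hold, which is the claim.

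I do not anticipate a genuine obstacle here: the statement is essentially a bookkeeping consequence of the splitting $TM = \xi \oplus \R\langle R_\lambda\rangle$ and the normalization $\lambda(R_\lambda) = 1$. The only point requiring mild care is the sign bookkeeping in $\lambda(X_H) = -H$, which must be traced back to the sign convention $H = -\lambda(X)$ fixed in the Conventions, and the interpretation of $\gamma^*\lambda$ as the scalar one-form $\lambda(\dot\gamma)\,dt$ on $[0,1]$ (or $\R$); once those are pinned down the equivalence is immediate in both directions.
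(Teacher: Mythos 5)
Your proof is correct and follows exactly the route the paper intends: the paper states this lemma without a written proof, describing it only as ``splitting the equation into the contact distribution direction and the Reeb direction,'' and your argument---decomposing $v=\dot\gamma-X_H$ via $TM=\xi\oplus\R\langle R_\lambda\rangle$ and using $\lambda(X_H)=-H$ to identify the Reeb component with $\gamma^*\lambda+H\,dt$---is precisely that splitting carried out in detail. The sign bookkeeping is consistent with the paper's convention $H=-\lambda(X_H)$, so there is nothing to fix.
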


For  the later purpose, we introduce the following proposition providing 
an explicit relationship between the contact Hamiltonians and
the conformal exponents of a given contact isotopy.
\begin{prop}[\cite{oh:contacton-Legendrian-bdy}, Proposition 2.17]
Let $\Psi = \{\psi_t\}$ be a contact isotopy of $(M, \xi = \ker \lambda)$ with $\psi_t^* \lambda = e^{g_t}\lambda$
generated by $H = H(t,y)$. We write $g_\Psi (t,y) = g_t(y)$. Then
\be\label{eq:conformalRlambda}
\frac{\del g_\Psi}{\del t}(t,y) = - R_\lambda[H](t,\psi_t(y)).
\ee
In particular, if $\psi_0 = \id$,
$$
g_\Psi (t,y) = \int^t_0 - R_\lambda[H](u,\psi_u(y))\, du.
$$
\end{prop}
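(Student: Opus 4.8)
The plan is to differentiate the defining relation $\psi_t^*\lambda = e^{g_t}\lambda$ with respect to the time variable $t$ and then identify the two sides using the contact Hamiltonian calculus already recorded above. The hypothesis that $\Psi$ is generated by $H$ means, in the paper's conventions, that $\frac{d}{dt}\psi_t = X_H(t,\psi_t)$ for the contact Hamiltonian vector field $X_H$ normalized by $H = -\lambda(X_H)$.

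For the main computation I would invoke the standard Cartan-type formula for the $t$-derivative of a pullback along a time-dependent flow, namely $\frac{d}{dt}(\psi_t^*\alpha) = \psi_t^*(\CL_{X_H}\alpha)$ for any differential form $\alpha$ (with $X_H$ evaluated at time $t$). Applied to $\alpha = \lambda$ together with the relation $\CL_{X_H}\lambda = f_t\,\lambda$, where $f_t = -R_\lambda[H_t]$ is the straightforward computation recorded above (obtained from the decomposition $X_H = X_H^\pi - H R_\lambda$ and \eqref{eq:dH=}), this gives
$$
\frac{d}{dt}(\psi_t^*\lambda) = \psi_t^*(f_t\,\lambda) = (f_t\circ\psi_t)\,\psi_t^*\lambda = -\big(R_\lambda[H](t,\psi_t(\cdot))\big)\,e^{g_t}\lambda.
$$
On the other hand, differentiating the right-hand side of $\psi_t^*\lambda = e^{g_t}\lambda$ directly yields $\frac{d}{dt}(e^{g_t}\lambda) = \frac{\del g_\Psi}{\del t}\,e^{g_t}\lambda$. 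Comparing these two expressions and cancelling the nowhere-vanishing one-form $e^{g_t}\lambda$ produces \eqref{eq:conformalRlambda}.

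For the remaining assertion, $\psi_0 = \id$ forces $\psi_0^*\lambda = \lambda$, hence $g_\Psi(0,\cdot)\equiv 0$; integrating \eqref{eq:conformalRlambda} in $t$ from $0$ then yields the displayed integral formula for $g_\Psi(t,y)$. No step here presents a genuine difficulty, so I expect the only real obstacle to be the bookkeeping of signs and composition order --- in particular, verifying that the flow convention $\frac{d}{dt}\psi_t = X_H(t,\psi_t)$ is the one compatible with $H = -\lambda(X_H)$, so that both the minus sign in $f_t = -R_\lambda[H_t]$ and the evaluation point $\psi_t(y)$ (rather than $y$ or $\psi_t^{-1}(y)$) come out exactly as in \eqref{eq:conformalRlambda}. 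This sign bookkeeping is presumably why the statement is conveniently recorded with a reference to \cite{oh:contacton-Legendrian-bdy}.
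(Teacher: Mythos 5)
Your argument is correct and complete: differentiating $\psi_t^*\lambda = e^{g_t}\lambda$ in $t$ via the time-dependent Cartan formula $\frac{d}{dt}(\psi_t^*\lambda) = \psi_t^*(\CL_{X_{H_t}}\lambda)$, combined with the identity $\CL_{X_H}\lambda = -R_\lambda[H]\,\lambda$ that the paper records just after \eqref{eq:contact-Hamiltonian}, is exactly the expected derivation, and your sign and composition-order bookkeeping is consistent with the paper's conventions ($H = -\lambda(X_H)$ and $\psi_H^t$ the flow of $X_H$), including the evaluation point $\psi_t(y)$ and the vanishing of $g_\Psi(0,\cdot)$ when $\psi_0$ is the identity. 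The paper itself imports this proposition from \cite{oh:contacton-Legendrian-bdy} without reproducing a proof, so there is no in-paper argument to compare against; yours is the standard one.
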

The same result, with different sign convention, is implicitly given in the proof of \cite[Corollary 2.3.2]{geiges}.
(See also \cite{bhupal} for the one-jet bundle case.)

To obtain a good variational problem for contact Hamiltonian dynamics, 
we use the following \emph{perturbed action functional}
introduced in \cite{oh:perturbed-contacton}.

\begin{defn}[Perturbed action functional]
Let $H = H(t,y)$ be a contact Hamiltonian on $M$
and $\phi_H^t$ be as in \eqref{eq:phiHt}.
Consider the free path space
$$
\CL(M) := C^{\infty}([0,1];M) = \{ \gamma: [0,1] \rightarrow M \}.
$$
We define a functional $\CA_H: \CL(M) \rightarrow \R$ given by
\be\label{eq:CAH}
\CA_H(\gamma) := \int_0^1 e^{g_{(\phi_H^t)^{-1}} (\gamma(t))} \gamma^* \lambda_H
= \int_0^1 e^{g_{(\phi_H^t)^{-1}} (\gamma(t))} (\lambda(\dot \gamma (t)) + H_t (\gamma (t))) dt
\ee
as in \eqref{eq:perturbed-action}.
Here $\lambda_H := \lambda + H dt$ with slight abuse of notation as in \cite{oh:perturbed-contacton}.
\end{defn}

The following lemma connects the perturbed action functional with the unperturbed (standard) one.

\begin{lem}[Lemma 2.2, \cite{oh:perturbed-contacton}]\label{lem:CAHu=CAw}
For given path $\gamma \in \CL(M)$, consider the path $\overline \gamma$ defined by
$$
\overline \gamma (t) := (\phi_H^t)^{-1}(\gamma(t)).
$$
Then we have
\be\label{eq:CAHu=CAw}
\CA_H(\gamma) = \CA_0(\overline \gamma)
\ee
where $\CA_0$ is the standard (unperturbed) contact action
functional given by $\CA_0(\gamma)  = \int_\gamma \lambda$.
\end{lem}
Following \cite{oh:cag} and \cite{oh:entanglement1}, we call the transformation $\gamma \mapsto \overline \gamma$
(resp. its inverse $\overline \gamma \to \gamma$) 
\emph{gauge transformations}. 
Since this transformation will enter in later discussion, we formalize its definition
now.
\begin{defn}[Gauge transformation]\label{defn:gauge-transform}
 Let $\gamma, \, \nu \in C^\infty([0,1], M)$.  We define the map
$\Phi_H: \gamma \mapsto \overline \gamma$
to be 
\be\label{eq:PhiH}
\Phi_H(\gamma)(t): = \overline \gamma(t) = (\phi_H^t)^{-1}(\gamma(t))
\ee
and denote by $\Psi_H=(\Phi_H)^{-1}$ its inverse which is given by
\be\label{eq:PsiH}
\Psi_H(\nu) (t)= \phi_H^t(\nu(t)) = \psi_H^t \left((\psi_H^1)^{-1}(\nu(t))\right)
\ee
\end{defn}
 
We now quote from \cite{oh:perturbed-contacton}
the following first variation formula of the action functional
\eqref{eq:perturbed-action} on the free path space $\CL(M)$. 

\begin{prop}[Proposition 2.3, \cite{oh:perturbed-contacton}]
For any vector field $\eta$ along $\gamma \in \CL(M)$, we have the first variation
\bea\label{eq:perturbed-1st-variation}
\delta \CA_H (\gamma)(\eta)
&=& \int_0^1 d\lambda((d \phi_H^t)^{-1}(\eta(t)), (d \phi_H^t)^{-1}(\dot \gamma - X_H(t,\gamma(t)))) 
\, dt \nonumber \\
& & + \lambda(\eta(1)) - e^{g_{\psi^1_H}(\gamma(0))} \lambda(\eta(0)).
\eea
\end{prop}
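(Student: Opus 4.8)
The plan is to sidestep differentiating the conformal weight $e^{g_{(\phi_H^t)^{-1}}}$ in \eqref{eq:CAH} directly, and instead to reduce everything to the first variation of the \emph{standard} contact action functional $\CA_0$ via the gauge transformation $\Phi_H$. By Lemma \ref{lem:CAHu=CAw} we have $\CA_H(\gamma) = \CA_0(\overline\gamma)$ with $\overline\gamma(t) := (\phi_H^t)^{-1}(\gamma(t))$, so it is enough to (i) compute $\delta\CA_0$ along variations of $\overline\gamma$, and (ii) translate the result back in terms of $\gamma$ and the variation field $\eta$.

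For (i), I would take a smooth family $\gamma_s$ with $\gamma_0 = \gamma$ and $\partial_s\big|_{s=0}\gamma_s = \eta$, put $\overline\gamma_s(t) := (\phi_H^t)^{-1}(\gamma_s(t))$, and note that since $\Phi_H$ acts fibrewise in $t$ the chain rule yields the variation field $\overline\eta(t) = \partial_s\big|_{s=0}\overline\gamma_s(t) = (d\phi_H^t)^{-1}(\eta(t))$, a vector in $T_{\overline\gamma(t)}M$. Then, writing $u(s,t) = \overline\gamma_s(t)$ and using $u^*(d\lambda)(\partial_s,\partial_t) = \partial_s\big(\lambda(\partial_t u)\big) - \partial_t\big(\lambda(\partial_s u)\big)$ together with the fundamental theorem of calculus in $t$, I obtain the classical first variation
\[
\delta\CA_0(\overline\gamma)(\overline\eta) = \int_0^1 d\lambda\big(\overline\eta(t),\,\dot{\overline\gamma}(t)\big)\,dt + \lambda(\overline\eta(1)) - \lambda(\overline\eta(0)).
\]

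For (ii), the crucial computation is that of $\dot{\overline\gamma}$. Since $\phi_H^t = \psi_H^t(\psi_H^1)^{-1}$ and $\psi_H^t$ is the flow of $X_H(t,\cdot)$, one checks $\partial_t\phi_H^t = X_H(t,\cdot)\circ\phi_H^t$, i.e. $\phi_H^t$ is again generated by $X_H$ (only its initial condition differs); differentiating $(\phi_H^t)^{-1}\circ\phi_H^t = \mathrm{id}$ in $t$ then gives $\partial_t(\phi_H^t)^{-1} = -(d\phi_H^t)^{-1}\circ X_H(t,\cdot)$. Combining this with the chain rule applied to $\overline\gamma(t) = (\phi_H^t)^{-1}(\gamma(t))$ produces
\[
\dot{\overline\gamma}(t) = (d\phi_H^t)^{-1}\big(\dot\gamma(t) - X_H(t,\gamma(t))\big),
\]
and substituting both this and $\overline\eta(t) = (d\phi_H^t)^{-1}(\eta(t))$ into the integrand above gives precisely the first term of \eqref{eq:perturbed-1st-variation}, with $d\lambda$ evaluated at $\overline\gamma(t)=(\phi_H^t)^{-1}(\gamma(t))$. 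For the boundary terms I would use $\phi_H^1 = \mathrm{id}$, which gives $\lambda(\overline\eta(1)) = \lambda(\eta(1))$, and $\phi_H^0 = (\psi_H^1)^{-1}$, so that $(d\phi_H^0)^{-1} = d\psi_H^1$ and $\lambda(\overline\eta(0)) = \big((\psi_H^1)^*\lambda\big)(\eta(0)) = e^{g_{\psi_H^1}(\gamma(0))}\lambda(\eta(0))$ by definition of the conformal exponent; this is the last term of \eqref{eq:perturbed-1st-variation}, and assembling the three pieces completes the argument.

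The only genuinely delicate point I anticipate is the bookkeeping in (ii): checking that $\phi_H^t$ is generated by the same $X_H$, pinning down the sign in $\partial_t(\phi_H^t)^{-1}$, and tracking which tangent space each vector inhabits so that $d\lambda$ is legitimately evaluated at $\overline\gamma(t)$ rather than $\gamma(t)$. An alternative route would be to differentiate $\CA_H$ in the form $\CA_H(\gamma) = \int_0^1 \lambda^t(\dot\gamma(t))\,dt$ with $\lambda^t = (\phi_H^t)_*\lambda$ (the Remark following Lemma \ref{lem:CAHu=CAw}), which trades the inverse-flow identity for handling $\frac{d}{dt}\lambda^t$-type terms via \eqref{eq:conformalRlambda}; this seems on balance messier, so I would keep the gauge-transformation route.
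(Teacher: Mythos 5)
Your argument is correct: the reduction $\CA_H(\gamma)=\CA_0(\overline\gamma)$ via the gauge transformation $\Phi_H$, the identity $\dot{\overline\gamma}=(d\phi_H^t)^{-1}(\dot\gamma-X_H(t,\gamma))$, and the boundary evaluations using $\phi_H^1=\mathrm{id}$ and $(\phi_H^0)^{-1}=\psi_H^1$ (so that $(\psi_H^1)^*\lambda=e^{g_{\psi_H^1}}\lambda$) all check out and yield exactly \eqref{eq:perturbed-1st-variation}. This is essentially the route the paper itself relies on: it places Lemma \ref{lem:CAHu=CAw} immediately before the quoted proposition and uses the same gauge-transformed expression $\CA_0(\overline\gamma)$ for its second-variation computation.
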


\begin{rem}
As in \cite[p.21]{arnold:book}, 
we follow the standard notation $\delta$ in the calculus of variations to denote the first variation, 
or the \emph{formal derivative}, of a functional.
In general, we reserve the notation `$d$' for the derivative in the more rigorous contexts.
\end{rem}

An immediate corollary of this first variation formula shows that the Legendrian boundary 
condition is a natural boundary condition for the action functional $\CA_H$ in that it kills the boundary contribution in the first variation:

\begin{cor}\label{cor:pi-critical}
Let $R$ be any given Legendrian submanifold of $(M,\xi)$. Consider the subset
$$
\CL_0(M;R): = \{ \gamma \in  \CL(M) \mid \gamma(0) \in R\}.
$$
Then we have  
$$ 
\delta \CA_H (\gamma)(\eta)
= \int_0^1 d\lambda\left(d(\phi_H^t)^{-1}(\eta(t)), d(\phi_H^t)^{-1}(\dot \gamma - X_H(t,\gamma(t)))\right) 
\, dt  + \lambda(\eta(1))
$$
on $\CL_0(M;R)$.
\end{cor}

\section{Carnot path space of translated horizontal paths}
\label{sec:Carnot-pathspace}

An immediate consequence of Corollary \ref{cor:pi-critical} is as follows.
By the vanishing of $\lambda$ on the Legendrian submanifold, we have derived that
$\delta \CA_H(\gamma)(\eta) = 0$ for all variations $\eta$ tangent to paths satisfying Legendrian boundary conditions if and only if
\be\label{eq:pi-critical}
((d\phi^t_H)^{-1}(\dot\gamma - X_{H_t}(\gamma(t))))^\pi = 0
\ee
i.e., it is equivalent to
$$
(d\phi^t_H)^{-1}(\dot \gamma - X_H(\gamma(t))) = f(t)R_\lambda(\overline{\gamma}(t))
$$
where  $f(t)$ is a smooth function.

This has led us to consider the following space

\begin{defn}[Carnot path space]\label{defn:Carnot-path}
Consider the subsets of $\CL(M)$ given by
$$
\CL^{\text{\rm Carnot}}(M,H)
= \{ \gamma : [0,1] \rightarrow M \; \mid \; \lambda_{\gamma}(\dot \gamma) + H(\gamma) = 0 \}.
$$
\end{defn}
The defining equation $ \lambda_{\gamma}(\dot \gamma) + H(\gamma) = 0$ is equivalent to 
the statement
$$
\dot \gamma(t) -X_H(\gamma(t)) \in \xi_{\gamma(t)}
$$
for all $t \in [0,1]$.
We note that \emph{since the distribution $\xi$ is not integrable}
a priori $\CL ^{\rm Carnot}(M, H)$ could be a pathological space, e.g., may
not carry the structure of Fr\'echet manifold on which we can do some differential calculus.
In this regard, we prove the following result which shows that $\CL ^{\rm Carnot} (M,H)$ carries a Fr\'echet manifold
structure and hence we can do the calculus of constrained variation of $\CA_H$ restricted thereto.

\begin{thm}\label{thm:Carnot-Frechet} The subset
$\CL^{\rm Carnot}(M,H) \subset \CL(M)$ is a Fr\'echet submanifold and so carries a natural smooth structure
induced from $\CL(M)$ for any given Hamiltonian $H = H(t,y)$.
\end{thm}

The rest of the section will be occupied by the proof of this theorem. For this purpose, we start with
the following lemma.

\begin{lem}\label{prop:Carnot-Frechet} Let $H = H(t,y)$ be any smooth Hamiltonian on $M$ and consider
the map
$$
\Upsilon: \gamma \in \CL(M) \mapsto \lambda (\dot \gamma) + H_t(\gamma) \in C^\infty ([0,1],\R).
$$
Then we have its derivative
\be\label{eq:DGamma}
d_\gamma \Upsilon(\eta) = d\lambda(\eta,\dot \gamma-X_{H_t}(\gamma)) + \frac{d}{dt}(\lambda(\eta)) + R_\lambda[H_t] \, \lambda(\eta).
\ee
\end{lem}

\begin{proof} Note that the codomain of $\Upsilon$ is the set $C^\infty([0,1],\R)$.
Denote a variation of $\gamma$ by $\eta$ and the differential of $\Upsilon$  by
\be\label{eq:Dgamma}
d_\gamma \Upsilon: T_\gamma \CL(M) \to C^\infty([0,1], \R).
\ee
Then we compute the derivative 
\bea\label{eq:dUpsiloneta}
d_\gamma \Upsilon(\eta) & = & \frac{d}{ds}\Big|_{s=0} \Upsilon(\gamma_s) 
=  \frac{d}{ds}\Big|_{s=0}\Big((\lambda(\dot \gamma_s)) + H(t, \gamma_s(t))\Big) \nonumber\\
& = &  \frac{d}{ds}\Big|_{s=0}\left(\lambda(\dot \gamma_s)\right) + dH_t(\gamma(t))(\eta(t)).
\eea
Utilizing the identity \eqref{eq:dH=}, we evaluate
\bea\label{eq:dHtetat}
dH_t(\gamma(t))(\eta(t)) & = & (X_{H_t} \intprod d\lambda + R_\lambda[H_t] \lambda)(\eta(t))
\nonumber \\
& = & d\lambda(X_{H_t}, \eta(t)) + R_\lambda[H_t]\lambda(\eta(t)).
\eea
It remains to  evaluate $\frac{d}{ds}\big|_{s=0}(\lambda(\dot \gamma_s))$. For this,
 we introduce
 the variation function $c(s,t): = \gamma_s(t)$ on $(-\epsilon,\epsilon) \times [0,1]$
 for some small $\epsilon > 0$ that satisfies
$$
c(0,t) = \gamma(t), \quad \frac{\del c}{\del s}(0,t) = \eta(t).
$$
Then we have
\bea\label{eq:CLs}
\frac{d}{ds}\Big|_{s=0}(\lambda(\dot \gamma_s)) & = & 
\frac{d}{ds}\Big|_{s=0}(\gamma_s^*\lambda(\del_t))
= \left(\frac{d}{ds}\Big|_{s=0}\gamma_s^*\lambda\right)(\del_t) \nonumber \\
& = & \CL_{\del_s}(\gamma_s^*\lambda)|_{s=0}(\del_t).
\eea
By Cartan's magic formula (applied to the Lie derivative over the map $c$), we have
$$
\CL_{\del_s}(\gamma_s^*\lambda)=  \del_s \rfloor d(\gamma_s^*\lambda) + d (\del_s \rfloor \gamma_s^*\lambda),
\quad \gamma_s(t) = c(s,t)
$$
By evaluating this against $\del_t$ at $s = 0$, we  obtain
\beastar
\CL_{\del_s}(\gamma_s^*\lambda)(\del_t)|_{s = 0}
& = & d\lambda(dc(\del_s),dc(\del_t))|_{s=0} + d(\lambda(dc(\del_s)))(\del_t)|_{s = 0} \\
& = & d\lambda(\eta,\dot \gamma) + \frac{d}{dt}(\lambda(\eta)).
\eeastar
By substituting this into \eqref{eq:CLs}, we get
$$
\frac{d}{ds}\Big|_{s=0}(\lambda(\dot \gamma_s)) =d\lambda(\eta,\dot \gamma) + \frac{d}{dt}(\lambda(\eta)).
$$
Then substituting this and \eqref{eq:dHtetat} into \eqref{eq:dUpsiloneta}, we have
derived \eqref{eq:DGamma} which finishes the proof of the lemma.
\end{proof}

By the expression of the derivative $d_\gamma \Upsilon$ in \eqref{eq:DGamma},
it follows that
it continuously extends to a bounded operator
\be\label{eq:completion}
d_\gamma \Upsilon: W^{1,2}(\gamma^*TM) \to L^2([0,1],\R).
\ee
We call this the $W^{1,2}$-completion of the derivative \eqref{eq:DGamma}.
Using this, we now prove the following important property of the map $\Upsilon$.

\begin{prop}\label{prop:submersion} The $W^{1,2}$ completion \eqref{eq:completion} of the
derivative $d_\gamma \Upsilon$ is a surjective map at all $\gamma$ in $\CL^{\text{\rm Carnot}}(M,H)$.
\end{prop}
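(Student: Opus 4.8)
The plan is to show surjectivity of the $W^{1,2}$-completion $d_\gamma\Gamma: W^{1,2}(\gamma^*TM)\to L^2([0,1],\R)$ by explicitly solving, for an arbitrary $h\in L^2([0,1],\R)$, the equation $d_\gamma\Gamma(\eta)=h$. Looking at formula \eqref{eq:DGamma}, the right-hand side depends on $\eta$ only through two quantities: the scalar function $u(t):=\lambda(\eta(t))$ and the value $d\lambda(\eta(t),\dot\gamma(t)-X_{H_t}(\gamma(t)))$. Crucially, since $\gamma\in\CL^{\text{\rm Carnot}}(M,H)$, the vector $\dot\gamma(t)-X_{H_t}(\gamma(t))$ lies in $\xi_{\gamma(t)}$ for every $t$. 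Hence the pairing $d\lambda(\,\cdot\,,\dot\gamma-X_{H_t}(\gamma))$ is a linear functional on $\xi_{\gamma(t)}$, and I want to exploit that it is controllable by choosing the $\xi$-component of $\eta$.

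First I would reduce the problem to a linear ODE in the single scalar unknown $u(t)=\lambda(\eta(t))$. Write $\eta=\eta^\pi + \lambda(\eta)\,R_\lambda$ with $\eta^\pi\in\xi_{\gamma(t)}$. Then $d\lambda(\eta,\dot\gamma-X_{H_t}(\gamma))=d\lambda(\eta^\pi,\dot\gamma-X_{H_t}(\gamma))$, and \eqref{eq:DGamma} becomes
\be\label{eq:reduced-surjectivity}
d_\gamma\Gamma(\eta)=d\lambda(\eta^\pi,\dot\gamma-X_{H_t}(\gamma)) + \dot u + R_\lambda[H_t]\,u.
\ee
So it suffices to produce, given $h\in L^2$, a pair $(\eta^\pi,u)$ with $\eta^\pi$ a $W^{1,2}$ section of $\gamma^*\xi$ and $u\in W^{1,2}([0,1],\R)$ satisfying \eqref{eq:reduced-surjectivity}. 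The simplest route: set $u\equiv 0$ (or any fixed $W^{1,2}$ function), so the task collapses to solving the pointwise-in-$t$ algebraic equation $d\lambda(\eta^\pi(t),\dot\gamma(t)-X_{H_t}(\gamma(t)))=h(t)$ for $\eta^\pi(t)\in\xi_{\gamma(t)}$.

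The key observation making this solvable is that $d\lambda$ restricted to $\xi$ is nondegenerate (this is exactly the contact condition $\lambda\wedge(d\lambda)^n\ne 0$). Therefore, wherever the vector $w(t):=\dot\gamma(t)-X_{H_t}(\gamma(t))\in\xi_{\gamma(t)}$ is nonzero, the linear functional $v\mapsto d\lambda(v,w(t))$ on $\xi_{\gamma(t)}$ is nonzero, so I can pick $\eta^\pi(t)$ proportional to the $d\lambda$-dual of $w(t)$; concretely, choose a complement of $\ker\,d\lambda(\,\cdot\,,w(t))$ and rescale. To handle the locus where $w(t)=0$ and to get the required $W^{1,2}$ regularity rather than mere measurability, I would instead fix a smooth section $\zeta$ of $\gamma^*\xi$ together with a cutoff; but the cleanest fix is: choose $\eta^\pi(t)=\varphi(t)\,J_t w(t)$ where $J_t$ is a smooth family of endomorphisms of $\xi_{\gamma(t)}$ compatible with $d\lambda$ (an almost-complex structure on $\xi$), so that $d\lambda(J_t w(t),w(t))=|w(t)|^2\ge 0$, and then the equation becomes $\varphi(t)\,|w(t)|^2 = -h(t)$ — which is still problematic at zeros of $w$. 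The honest resolution, and the step I expect to be the main obstacle, is to allow $u$ to be nonzero and absorb the defect: treat \eqref{eq:reduced-surjectivity} as the ODE $\dot u + R_\lambda[H_t]\,u = h(t) - d\lambda(\eta^\pi(t),w(t))$ in $u$ with, say, $u(0)=0$, which always has a unique $W^{1,2}$ solution for any $L^2$ right-hand side (integrating factor $e^{\int_0^t R_\lambda[H_s](\gamma(s))\,ds}$); taking $\eta^\pi\equiv 0$ already gives surjectivity.

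Thus the final structure of the proof is short: given $h\in L^2([0,1],\R)$, set $\eta^\pi=0$, let $u$ be the unique $W^{1,2}$ solution of the scalar linear ODE $\dot u + R_\lambda[H_t](\gamma(t))\,u = h$ with $u(0)=0$, and let $\eta(t)=u(t)R_\lambda(\gamma(t))$; then $\eta\in W^{1,2}(\gamma^*TM)$ and $d_\gamma\Gamma(\eta)=h$ by \eqref{eq:DGamma}, since both $d\lambda$-terms vanish (one because $\eta$ is a Reeb multiple, the other because $R_\lambda\rfloor d\lambda=0$). The only points requiring care are: verifying $\eta\in W^{1,2}$ (immediate, since $t\mapsto R_\lambda(\gamma(t))$ is smooth along the smooth curve $\gamma$ and $u\in W^{1,2}\subset C^0$), and checking that the expression \eqref{eq:DGamma} for $d_\gamma\Gamma$, derived for smooth variations, indeed agrees with its bounded $W^{1,2}$-extension on such $\eta$ — which follows from density of smooth sections and continuity of \eqref{eq:completion}. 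Hence $d_\gamma\Gamma$ is surjective, and in fact admits a bounded right inverse given by the solution operator of the ODE, which will be convenient for the implicit/submersion argument completing the proof of Theorem \ref{thm:Carnot-frechet}.
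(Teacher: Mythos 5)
Your proof is correct, but it takes a genuinely different route from the paper. The paper argues by duality: it pairs $d_\gamma\Gamma(\eta)$ against a test function $g\in L^2([0,1],\R)$, integrates by parts, and uses the decomposition $\eta=\eta^\pi+\lambda(\eta)R_\lambda$ together with nondegeneracy of $d\lambda|_\xi$ to show that any $g$ annihilating the image satisfies the adjoint ODE $-\dot g+R_\lambda[H_t]g=0$ with $g(0)=g(1)=0$, hence $g\equiv 0$; surjectivity is then deduced via Hahn--Banach/Fredholm alternative. You instead construct a preimage directly: given $h\in L^2$, take $\eta=u\,R_\lambda\circ\gamma$ with $u$ the $W^{1,2}$ solution of $\dot u+R_\lambda[H_t](\gamma(t))\,u=h$, $u(0)=0$, and observe that both $d\lambda$-terms in \eqref{eq:DGamma} vanish because $R_\lambda\rfloor d\lambda=0$. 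Your construction buys several things: it produces an explicit bounded right inverse (useful for the submersion/implicit function argument behind Theorem \ref{thm:Carnot-frechet}); it sidesteps the closed-range issue that a pure duality argument must address before ``dense image'' can be upgraded to ``surjective''; and it is essentially the same mechanism the paper only deploys in the \emph{subsequent} theorem (the smooth-category surjectivity, via the ODE \eqref{eq:1st-order-ODE}), so your argument handles both the $W^{1,2}$ and the smooth statements uniformly --- if $h$ is smooth, so is $u$, since the coefficient $R_\lambda[H_t](\gamma(t))$ is (piecewise) smooth in $t$. What the paper's duality computation buys in exchange is the explicit identification of the formal cokernel equation \eqref{eq:coker-eq} and the pointwise identity \eqref{eq:1st-line}, which record how the $\xi$-component and the Reeb component of a variation enter separately; this bookkeeping is reused later in the critical-point analysis. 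The exploratory middle portion of your write-up (attempting to solve via $\eta^\pi$ proportional to $J_tw$) is unnecessary, as you yourself conclude; only the final Reeb-direction construction is needed.
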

\begin{proof} Let $\gamma$ be any such a curve and write $D_\gamma = d_\gamma \Upsilon$ as before.
We will prove surjectivity by Fredholm alternative by considering the $L^2$-pairing
$$
\langle\langle D_\gamma(\eta), g\rangle\rangle = \int_0^1 ( D_\gamma(\eta)\, g)\, dt
$$
for the test function $\eta \in W^{1,2}(\gamma^*TM)$ and a function $g \in L^2([0,1],\R)$ to study.

For this purpose, we assume $\langle\langle D_\gamma(\eta), g\rangle\rangle = 0$ for all $\eta \in W^{1,2}(\gamma^*TM)$.
We would like to conclude $g = 0$. For this purpose, it will be enough to consider \emph{smooth} test functions
$\eta$ WLOG. We compute
\beastar
0 & = & \int_0^1 ( D_\gamma(\eta)\, g)\, dt \\
& = & \int_0^1\left( d\lambda(\eta, \dot \gamma - X_H(t,\gamma)) + \frac{d}{dt}(\lambda(\eta)) + R_\lambda[H_t]\lambda(\eta)\right)\,
 g\, dt \\
& = &  \int_0^1 d\lambda(\eta, \dot \gamma - X_H(t,\gamma))\, g(t) \, dt
+ \int_0^1 \left(\frac{d}{dt}(\lambda(\eta)) + R_\lambda[H_t]\lambda(\eta)\right)\, g(t)\, dt.
\eeastar
By doing integration by parts, we rewrite the second integral into
\beastar
&{}& \int_0^1 \left(\frac{d}{dt}(\lambda(\eta)) + R_\lambda[H_t]\lambda(\eta)\right)\, g\, dt\\
& = &  \int_0^1 \left(-\frac{dg}{dt} + R_\lambda[H_t]g\right) \lambda(\eta) dt
+ g(1) \lambda(\eta(1)) - g(0)\lambda(\eta(0)).
\eeastar
Therefore we have derived
\beastar
0 & = &  \int_0^1 d\lambda(\eta, \dot \gamma - X_H(t,\gamma))\, g(t) \, dt\\
&{}& \quad +  \int_0^1 \left(-\frac{dg}{dt} + R_\lambda[H_t]g\right) \lambda(\eta) dt
+ g(1) \lambda(\eta(1)) - g(0)\lambda(\eta(0))
\eeastar
for all $\eta$. By writing $\eta = \eta^\pi + \lambda(\eta) R_\lambda$, we can separate the two lines in
our consideration.

The first line can be written as
$$
\int_0^1 d\lambda(\eta, \dot \gamma - X_H(t,\gamma))\, g(t) \, dt
= \int_0^1 d\lambda(\eta^\pi, g(t) (\dot \gamma - X_H(t,\gamma))^\pi) \, dt.
$$
Since $\eta^\pi \in \gamma^*\xi$ is arbitrary and by nondegeneracy of $d\lambda|_\xi$, we have derived
\be\label{eq:1st-line}
g(t) (\dot \gamma - X_H(t,\gamma))^\pi = 0.
\ee
The second line gives rise to
\be\label{eq:coker-eq}
-\frac{dg}{dt} + R_\lambda[H_t]g = 0, \quad g(1) = 0 = g(0)
\ee
since $\lambda(\eta)$ can be an arbitrary real valued function on $[0,1]$.
Note that the equation  from \eqref{eq:coker-eq}
 is a linear homogeneous first-order ODE for $g$ on $\R$. Therefore, from the initial condition $g(0) = 0$ alone,
we have derived $g \equiv 0$. 
By the Hahn-Banach theorem, this finishes the proof of surjectivity of the operator
$$
d_\gamma \Upsilon: W^{1,2}(\gamma^*TM) \to L^2([0,1],\R).
$$
\end{proof}

We now need to derive the following smooth counterpart from this proof.
The following theorem is an immediate corollary of Proposition \ref{prop:submersion} and
the regularity theorem for the inhomogeneous first-order linear ODE
$$
\frac{df}{dt} + R_\lambda[H_t] f = h
$$
on $\R$. (See \cite[Section 32]{arnold:ODE} for example.)
\begin{thm}\label{thm:submersion} The derivative  \eqref{eq:Dgamma}
is surjective at every element $\gamma \in \CL^{\text{\rm Carnot}}(M,H)$.
\end{thm}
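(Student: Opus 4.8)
The plan is to produce an explicit right inverse of $d_\gamma\Gamma$ that already takes values in smooth sections, so that strictly speaking no approximation is needed; this is just the smooth shadow of the computation carried out in the proof of Proposition~\ref{prop:submersion}. The starting observation is structural: since $R_\lambda\rfloor d\lambda=0$, the term $d\lambda(\eta,\dot\gamma-X_{H_t}(\gamma))$ in \eqref{eq:Dgamma} depends only on the horizontal component $\eta^\pi$, whereas the remaining two terms $\frac{d}{dt}(\lambda(\eta))+R_\lambda[H_t]\lambda(\eta)$ depend only on the scalar $f:=\lambda(\eta)$. Hence it suffices to restrict attention to vector fields of the special form $\eta=f\,(R_\lambda\circ\gamma)$ with $f\in C^\infty([0,1],\R)$; for such $\eta$ one has $\eta^\pi\equiv 0$, $\lambda(\eta)=f$, and $d_\gamma\Gamma(\eta)$ reduces to the first-order linear differential operator
\[
L\colon\ f\ \longmapsto\ \frac{df}{dt}+R_\lambda[H_t](\gamma(t))\,f
\]
on $C^\infty([0,1],\R)$.

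Given an arbitrary target $h\in C^\infty([0,1],\R)$, I would then invoke the elementary existence/regularity theory for scalar linear ODEs. The coefficient $t\mapsto R_\lambda[H_t](\gamma(t))$ is smooth on $[0,1]$ because $H$ is smooth and $\gamma$ is a smooth curve, so setting $\Theta(t):=\int_0^t R_\lambda[H_s](\gamma(s))\,ds$ the variation-of-constants formula
\[
f(t)=e^{-\Theta(t)}\Big(f(0)+\int_0^t e^{\Theta(s)}\,h(s)\,ds\Big)
\]
gives, for any choice of initial value $f(0)\in\R$, a smooth solution of $Lf=h$ on all of $[0,1]$. Taking $\eta:=f\,(R_\lambda\circ\gamma)\in C^\infty(\gamma^*TM)$, which is a genuine smooth section since $R_\lambda$ is a smooth vector field on $M$, yields $d_\gamma\Gamma(\eta)=h$. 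This establishes surjectivity of \eqref{eq:Dgamma} at every $\gamma\in\CL^{\mathrm{Carnot}}(M,H)$; note that the argument in fact never uses the Carnot constraint, so the derivative is surjective at every $\gamma\in\CL(M)$.

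I do not expect a genuine obstacle here — the statement is a routine corollary — and the only point deserving a word of care is the legitimacy of the reduction to the form $\eta=f\,(R_\lambda\circ\gamma)$, i.e. that the splitting $TM=\xi\oplus\R\langle R_\lambda\rangle$ and hence $\eta=\eta^\pi+\lambda(\eta)R_\lambda$ are smooth, which is immediate. If one instead prefers to follow the text and deduce the result literally from the proof of Proposition~\ref{prop:submersion}, one repeats the Fredholm-alternative argument there verbatim with $L^2$ replaced by $C^\infty$ and $W^{1,2}$ by $C^\infty$: the annihilator computation forces \eqref{eq:1st-line} together with the ODE \eqref{eq:coker-eq}, whose only solution with the prescribed boundary behaviour is $g\equiv0$ by the variation-of-constants formula; combined with the smooth solvability of $Lf=h$ just displayed, this gives surjectivity onto $C^\infty([0,1],\R)$.
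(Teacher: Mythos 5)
Your proof is correct, but it takes a genuinely different and more economical route than the paper. The paper first proves surjectivity of the $W^{1,2}$-completion $d_\gamma\Gamma\colon W^{1,2}(\gamma^*TM)\to L^2([0,1],\R)$ by a Fredholm-alternative/annihilator argument (Proposition \ref{prop:submersion}), and then upgrades regularity: it takes the $W^{1,2}$ preimage, smooths its $\xi$-component by mollification, and repairs the resulting error by re-solving the first-order ODE $\frac{df}{dt}+R_\lambda[H_t]f=g-g'$ in the Reeb direction. You observe instead that the Reeb direction alone already surjects: since $R_\lambda\rfloor d\lambda=0$, setting $\eta=f\,(R_\lambda\circ\gamma)$ kills the $d\lambda$-term in \eqref{eq:DGamma} outright and reduces $d_\gamma\Gamma$ to the scalar operator $f\mapsto \frac{df}{dt}+R_\lambda[H_t](\gamma(t))f$, which is solved explicitly and smoothly by variation of constants. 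This bypasses the Sobolev completion and the approximation step entirely, produces an explicit continuous linear right inverse $h\mapsto f\,R_\lambda$ (which is in fact stronger than bare surjectivity and is the more useful datum for an implicit-function-theorem argument in the Fr\'echet category), and, as you note, shows the conclusion holds at every $\gamma\in\CL(M)$, not only on the Carnot locus. What your argument does \emph{not} replace is Proposition \ref{prop:submersion} itself, whose $L^2$-duality computation also identifies the cokernel equation \eqref{eq:coker-eq} and the horizontal constraint \eqref{eq:1st-line}; but for the theorem as stated your direct construction is complete and self-contained. The one hypothesis worth keeping visible is the smoothness in $t$ of the coefficient $R_\lambda[H_t](\gamma(t))$ and of the section $R_\lambda\circ\gamma$, which holds because the theorem is applied to smooth $\gamma\in\CL(M)=C^\infty([0,1];M)$; at a genuinely only piecewise-smooth $\gamma$ the same formula still solves the equation on each smooth piece.
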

\begin{proof} Let $g \in C^\infty([0,1],\R)$. By Proposition \ref{prop:submersion},
we have an element $\eta \in W^{1,2}(\gamma^*TM)$ such that
$d_\gamma \Upsilon(\eta) = g$, i.e.,
\be\label{eq:surjectivity-eq}
d\lambda(\eta,\dot \gamma-X_{H_t}(\gamma)) + \frac{d}{dt}(\lambda(\eta)) 
+ R_\lambda[H_t] \, \lambda(\eta) = g.
\ee
We will adjust $\eta$ to another \emph{smooth solution} $\eta'$.

Take any smooth section $\zeta$ satisfying $\zeta(0) = \eta(0)$.
We then consider the function $g'$ defined by
$$
g' := d\lambda(\zeta, \dot \gamma - X_{H_t}(\gamma))
$$
which is smooth by definition. Then we solve the equation
\be\label{eq:1st-order-ODE}
\begin{cases}
\frac{df}{dt} + R_\lambda[H_t] \, f = g - g',\\
f(0) = \lambda(\eta(0))
\end{cases}
\ee
for $f$. Here we remark that since $\eta \in W^{1,2}(\gamma^*TM)$
it is continuous by the Sobolev embedding theorem and hence the value $\eta(0)$ has well-defined 
meaning.
Since $g - g'$ is smooth by hypothesis on $g$ and by the choice of $g'$,
the unique solution $f$ of \eqref{eq:1st-order-ODE} is smooth by the continuity
theorem of the first order ODE: For this, we observe that the 
directional derivative $R_\lambda[H_t]$ is a smooth function of $t$.

Now we denote by $\zeta^\pi$ the $\xi$-component of $\zeta$ which is smooth.
Then  we consider the smooth section $\eta' \in \Gamma(\gamma^*TM)$ defined by
$$
\eta' = \zeta^\pi + f R_\lambda.
$$
By the definition of $\eta'$, it is smooth and satisfies 
\beastar
d_\gamma\Upsilon(\eta') & = &  d\lambda(\eta',\dot \gamma-X_{H_t}(\gamma)) + \frac{d}{dt}(\lambda(\eta')) 
+ R_\lambda[H_t] \, \lambda(\eta') \\
& = &  d\lambda\left(\zeta^\pi + f R_\lambda ,\dot \gamma-X_{H_t}(\gamma)\right) 
+ \frac{d}{dt}\left(\lambda(\zeta^\pi + f R_\lambda)\right) 
+ R_\lambda[H_t] \, \lambda(\zeta^\pi + f R_\lambda) \\
& = & d\lambda(\zeta^\pi ,\dot \gamma-X_{H_t}(\gamma)) 
+ \frac{d}{dt}(\lambda(f R_\lambda)) 
+ R_\lambda[H_t] \, \lambda( f R_\lambda) \\
& = & g' + \left( \frac{df}{dt}
+ R_\lambda[H_t] \, f \right) = g' + (g-g') = g.
\eeastar
Furthermore, we check its initial value
$$
\eta'(0) = \zeta^\pi(0) + f(0) R_\lambda = \eta^\pi(0) + \lambda(\eta(0)) R_\lambda = \eta(0)
$$
where the second equality follows from the standing requirement $\zeta(0) = \eta(0)$ imposed in the beginning
and the initial condition for $f$ imposed in \eqref{eq:1st-order-ODE}.
This finishes the proof.
\end{proof}

\begin{rem} By a mollifier smoothing theorem (e.g., see \cite[Lemma 7.1]{gilbarg-trudinger})  of $W^{1,2}$ by $C^\infty$, we can actually 
take $\eta'$ as close to $\eta$ in the $W^{1,2}$ topology as we want. But we do not need this approximation result
for the proof of the above surjectivity.
\end{rem}

Theorem \ref{thm:submersion} in particular implies that $0 \in C^\infty([0,1],\R)$ is a regular value of the
(Fr\'echet) smooth map $\Upsilon$ the linearization of which is a linear first order ODE operator
which can be seen from the formula \eqref{eq:DGamma}.
Recalling by definition that 
$$
\CL^{\rm Carnot}(M,H) = \Upsilon^{-1}(0),
$$
the implicit function theorem \cite{sergeraert}
finishes the proof of Theorem \ref{thm:Carnot-Frechet}.

\part{Contact analogue to Weinstein's canonical generating function}

To motivate our discussion and for the convenience of readers,
we recall Weinstein's de-approximation \cite{alan:observation}
on the cotangent bundle in Section \ref{sec:alan-observation}.
In this de-approximation, he showed that the classical action functional 
$$
\CA_H^{\text{\rm cl}}: \CL_0(T^*B;o_{T^*B}) \to \R
$$
is a generating function of the Hamiltonian isotope
$$
L = \phi_H^1(o_{T^*B})
$$
where $\phi_H^t$ is the \emph{symplectic Hamiltonian} isotopy generated by
$H$. Here the domain, denoted by $\CL_0(T^*B;o_{T^*B})$, 
of the functional is the space consisting of the paths $\gamma: [0,1] \to T^*B$
satisfying the \emph{initial boundary condition} $\gamma(0) \in o_{T^*B}$.

In this Part, we will construct  the contact counterpart of this Weinstein's 
\emph{canonical, but infinite dimensional} generating function, on the one-jet bundle
$$
(J^1B, \lambda), \quad \lambda : = dz - \pi_{T^*B}^*\theta =dz - pdq,
$$
where $\pi_{T^*B}: J^1B \to T^*B$ is the canonical projection in \eqref{eq:piBpiT*B}.
 
\section{Weinstein's de-approximation of Laudenbach-Sikorav's construction}
\label{sec:alan-observation}

In this section, we explain Weinstein's de-approximation \cite{alan:observation} of Laudenbach-Sikorav's
`canonical' generating function of the time-one image $\phi_H^1(o_{T^*B})$  in the cotangent bundle. 
It is a generating function, canonically arising from the classical action functional $\CA_H^{\text{\rm cl}}$,
in the sense of the Morse family on the general fiber bundle
presented in  Definition \ref{defn:Morsefamily} whose fiber is an infinite-dimensional manifold.

A straightforward calculation gives rise to the first variation of the classical action functional
$$
\CA_H^{\text{\rm cl}}(\gamma) = \int \gamma^*\theta - H(t,\gamma(t))\, dt
$$
on the path space $\CL(T^*B)$ is given by
\beastar
\delta \CA_H^{\text{\rm cl}}(\gamma)(\eta) & =  &
\int_0^1 \omega_0(\eta(t), \dot \gamma(t) - X_{H_t}(\gamma(t)))\, dt
+ \langle \eta(1), \theta(\gamma(1)) \rangle - \langle \eta(0), \theta(\gamma(0)) \rangle\\
& = & \int_0^1 \omega_0(\eta(t), \dot \gamma(t) - X_{H_t}(\gamma(t)))\, dt
 + \langle d\pi (\eta(1)), \gamma(1) \rangle - \langle d\pi (\eta(0)), \gamma(0) \rangle.
 \eeastar
 (See  \cite{alan:Hamilton's}, \cite{arnold:book}, \cite{oh:jdg} for an explicit derivation.)

Then Weinstein directly put the action functional into the following \emph{infinite dimensional} framework on
the general fibration, replacing Laudenbach-Sikorav's finite dimensional approximation
of the action functional.

\begin{prop}[Weinstein \cite{alan:observation}]\label{lem:weinstein}
Let $H= H(t,x)$ be a time-dependent Hamiltonian on
 the cotangent bundle $T^*B$. Then the action functional $\CA_H^{\text{\rm cl}}$ 
restricted to the subset 
$$
\CL_0(T^*B,o_{T^*B}) = \{ \gamma: [0,1] \to T^*B \mid \gamma(0) \in o_{T^*B}\}
$$
of the full path space is a generating function of the time-one image $\phi_H^1(o_{T^*B})$ of the zero section
under the Hamiltonian flow of $H$.
\end{prop}
\begin{proof}
We consider the path space
$\CL_0(T^*B,o_{T^*B})$ and the diagram
\begin{equation}\label{eq:weinstein-diagram}
\xymatrix{\CL_0(T^*B,o_{T^*B}) \ar[d]^{\Pi} \ar[r]^(.7){\CA_H^{\text{\rm cl}}} & \R \\
B &
}
\end{equation}
where the fibration $\Pi: \CL_0(T^*B,o_{T^*B}) \to B$ is defined by
$\Pi: = \pi \circ \ev_1$ and $\ev_1: \CL_0(T^*B;o_{T^*B}) \to T^*B$ is the evaluation map
$$
\ev_1(\gamma) : = \gamma(1), \quad \text{\rm for } \, \gamma \in \CL_0(T^*B;o_{T^*B}).
$$
The above first variation formula for
$\eta$ tangent to $\CL_0(T^*B,o_{T^*B})$ satisfies $p(\gamma(0))= 0$ and hence
we have
$$
\delta \CA_H^{\text{\rm cl}}(\gamma)(\eta) =\int_0^1 \omega_0(\eta(t), \dot \gamma(t) - X_{H_t}(\gamma(t)))\, dt
 + \langle d\pi (\eta(1)), \gamma(1) \rangle.
$$
It follows from the definition of the fibration $\Pi$ the vertical derivative of $\CA_H$ for $\Pi$
is nothing but
$$
\delta^{\text{\rm vert}}\CA_H^{\text{\rm cl}}(\gamma)(\eta): = \int_0^1 \omega_0(\eta(t), 
\dot \gamma(t) - X_{H_t}(\gamma(t)))\, dt
$$
and hence the vertical critical point $\gamma$ is nothing but the Hamiltonian trajectory
satisfying
$$
\dot \gamma(t) = X_H(t,\gamma(t)), \qquad \gamma(0) \in o_{T^*B}.
$$
In other words, we have
$$
\Sigma_{\CA_H^{\text{\rm cl}}} = \{ \gamma \in \CL(T^*B) \mid \dot \gamma(t) = X_H(t,\gamma(t)),
 \, \gamma(0) \in o_{T^*B}\}.
$$
Then we obtain the horizontal derivative
$$
\delta^{\text{\rm hor}}\CA_H^{\text{\rm cl}}(\gamma) = \gamma(1).
$$
Since $\gamma(t)$ is a Hamiltonian trajectory whose final point lies in $\phi_H^1(o_{T^*B})$,
we can write
$$
\gamma(t) = \phi_H^t(\phi_H^1)^{-1}(x), \, \gamma(1) = x \in \phi_H^1(o_{T^*B}).
$$
This proves that $\CA_H^{\text{\rm cl}}|_{\CL_0(T^*B;o_{T^*B})}$ is a generating function of
$L: = \phi_H^1(o_{T^*B})$.
\end{proof}

The content and its proof of the above Weinstein's formulation of the action functional as a 
generating function can be succinctly summarized as follows:
\begin{itemize}
\item First solve the fiberwise (or vertical) critical point equation. This provides
the \emph{equation of motion}, Hamilton's equation $\dot x = X_H(t,x)$.
\item Then push forward the image of the differential $\delta\CA_H^{\text{\rm cl}}$ by
the map $\pi \circ \ev_1$ to the cotangent bundle $T^*B$. \emph{This provides
the final location of the particle.}
\item The resulting push-forward image is precisely $\phi_H^1(o_{T^*B})$.
\emph{This provides the final momentum of the particle.}
\end{itemize}

The rest of the paper will be occupied by our construction of
 the contact/Legendrian counterpart of Laudenbach-Sikorav's construction which we apply to our effective 
contact action functional $\CA _H^{\rm CC}$  on the one-jet bundle $J^1B$.

\section{Perturbed action functional as a contact generating function}
\label{sec:effective}

In this section, we assume that $H$ is a compactly supported Hamiltonian function on $J^1B$.
By specializing Definition \ref{defn:Carnot-path} to the case $M = J^1B$, we consider 
the subset
$$
\CL ^{\rm Carnot}_{0}(J^1B, H; R) := \CL^{\rm Carnot} (J^1B, H) \cap \CL_{0}(J^1B;R) 
$$
by considering the Legendrian boundary condition at $t = 0$.
More explicitly, we have 
\be\label{eq:CL-Carnot}
\CL ^{\rm Carnot}_{0}(J^1B, H; R) = \{ \gamma \in \CL (J^1B)  \mid \lambda_{\gamma}(\dot \gamma) + H(\gamma) = 0,
\, \gamma(0) \in R \}.
\ee

To construct a contact analogue to the above mentioned Weinstein's de-approximation \cite{alan:observation}
of the case of the 1-jet bundles $J^1B$, we need to consider the following action functional
modified from $\CA_H$. One reason for this modification is that
\emph{by definition we have
$$
\CA_H(\gamma) = 0
$$ 
for any contact Hamiltonian trajectory $\gamma$}.
We now  introduce  the aforementioned \emph{effective action functional}.

\begin{defn}[Effective action functional] We define $\widetilde \CA_H: \CL(J^1B)\to \R$ to be
\beastar
\widetilde{\mathcal{A}}_H(\gamma) & = & - \CA_H (\gamma) +
z(\gamma(1)) \nonumber \\
& = & - \int_0^1 e^{g_{(\phi_H^t)^{-1}}(\gamma (t))} (\lambda_{\gamma(t)} (\dot \gamma (t)) + H(\gamma(t))) dt +  z(\gamma(1)).
\eeastar
\end{defn}

\begin{rem}\label{rem:adoption} 
\begin{enumerate} 
\item The adoption of this functional for our purpose involves two reasons.
Note that in \emph{off-shell}, we can rewrite $\widetilde A_H$ as 
\beastar
- \CA_H (\gamma) + z(\gamma(1)) & = & \int  e^{g_{(\phi_H^t)^{-1}}} \left((\pi\circ \gamma)^*\theta - H(t, \gamma(t))\right)\, dt\\
&{}& + z(\gamma(1)) - \int e^{g_{(\phi_H^t)^{-1}}(\gamma(t))} \gamma^* dz.
\eeastar
Here
the first integral evaluated at $\gamma$ is the same as $\CA_H^{\text{\rm cl}}(\ell)$ if $\ell(t) = \pi(\gamma(t))$.
This is one reason for the adoption of the negative sign
in our definition of $\widetilde \CA_H$. (One should also use the fact that such a contact Hamiltonian flow is 
always \emph{strict}, i.e., $g_{\phi_H^t} \equiv 0$.) Furthermore we show in \cite[Proposition 2.11]{oh-yso:spectral}
that this functional exactly coincides with the \emph{effective action functional} used by
the first-named author in \cite{oh:jdg} when the contact Hamiltonian $H = H(t,q,p,z)$ on $J^1B$
is a lift of the symplectic Hamiltonian on the cotangent bundle $T^*B$. 
\item On the other hand, in \emph{on-shell}, i.e., for the curve satisfying the equation $\dot \gamma = X_H(t,\gamma(t))$
with $\gamma(0) \in o_{J^1B}$, we have $\CA_H(\gamma) = 0$, and so 
\be\label{eq:onshell-formula}
\widetilde \CA_H(\gamma) = z(\gamma(1)) = \int_0^1 \dot z(t)\, dt = 
\int_0^1 \left(p \frac{\del H}{\del p}(t,\gamma(t)) - H(t, \gamma(t))\right) \, dt
\ee
which reveals how the boundary value $z(\gamma(1))$ depends on $H$ and the history of the trajectory $\gamma$.
In \eqref{eq:onshell-formula}, the last equality follows from \eqref{eq:XH-in-Darboux}.

\end{enumerate}
\end{rem}

For later purpose, we write this functional in terms of the gauge-transformed path $\overline \gamma$
defined in \eqref{eq:PhiH}. 

\begin{prop} Let $\overline \gamma$ be the curve given by  $\overline \gamma(t) = (\phi_H^t)^{-1}(\gamma(t))$.
Then
$$
\widetilde \CA _H (\gamma) = - \int {\overline \gamma}^*\lambda + z(\overline \gamma (1)).
$$
\end{prop}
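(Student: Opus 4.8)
The plan is to read the identity off directly from the gauge-transformation formula of Lemma~\ref{lem:CAHu=CAw}, together with the endpoint normalization $\phi_H^1 = \id$; essentially no computation is needed. First I would simply unravel the definition of the effective action functional, namely $\widetilde\CA_H(\gamma) = -\CA_H(\gamma) + z(\gamma(1))$, and then apply Lemma~\ref{lem:CAHu=CAw} to the path $\gamma$ with its associated gauge-transformed path $\overline\gamma(t) = (\phi_H^t)^{-1}(\gamma(t))$. That lemma gives $\CA_H(\gamma) = \CA_0(\overline\gamma) = \int_0^1 \overline\gamma^*\lambda$, so that
$$
\widetilde\CA_H(\gamma) \;=\; -\CA_H(\gamma) + z(\gamma(1)) \;=\; -\int_0^1 \overline\gamma^*\lambda \,+\, z(\gamma(1)).
$$

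Second, I would match the two boundary terms. Since $\phi_H^1 = \id$, we have $\overline\gamma(1) = (\phi_H^1)^{-1}(\gamma(1)) = \gamma(1)$, and hence $z(\gamma(1)) = z(\overline\gamma(1))$ on the nose. Substituting this into the displayed equation gives $\widetilde\CA_H(\gamma) = -\int \overline\gamma^*\lambda + z(\overline\gamma(1))$, which is the claim.

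I do not expect any real obstacle here; the only point worth flagging is that this last bookkeeping step is precisely the reason the normalization $\phi_H^t := \psi_H^t(\psi_H^1)^{-1}$ (rather than $\psi_H^t$ itself) was adopted, as recorded in Remark~\ref{rem:adoption}(2): with the other convention the two $z$-terms would not coincide and one would be forced to carry along a correction term. One could alternatively bypass Lemma~\ref{lem:CAHu=CAw} and prove the proposition by hand, expanding the integrand $e^{g_{(\phi_H^t)^{-1}}(\gamma(t))}\,(\lambda_{\gamma(t)}(\dot\gamma(t)) + H(\gamma(t)))$ and using the identity \eqref{eq:conformalRlambda} for $\partial_t g_\Psi$ to recognize it pointwise as $\overline\gamma^*\lambda$, but this merely reproves Lemma~\ref{lem:CAHu=CAw} and so is not worth doing separately.
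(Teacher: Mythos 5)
Your proof is correct and is exactly the intended argument: the paper gives no separate proof of this proposition, since it follows immediately from Lemma \ref{lem:CAHu=CAw} ($\CA_H(\gamma) = \CA_0(\overline\gamma) = \int \overline\gamma^*\lambda$) together with $\phi_H^1 = \mathrm{id}$, which gives $\overline\gamma(1) = \gamma(1)$ and hence $z(\gamma(1)) = z(\overline\gamma(1))$. Your remark that this endpoint matching is precisely what the normalization $\phi_H^t = \psi_H^t(\psi_H^1)^{-1}$ is designed to achieve is also consistent with Remark \ref{rem:adoption}(2).
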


We will consider
the variational study of the critical point problem of $\widetilde \CA_H$  
as a constrained variational problem. In this regard, we
restrict $\widetilde \CA_H$ to the Carnot path space
$\CL ^{\rm Carnot}_{0}(J^1B, H; o_{J^1B})$ 
 to make Hamilton's equation
$\dot y = X_H(t,y)$ as the critical point equation or as
the \emph{equation of motion} in the physics term. 

\begin{defn}\label{defn:CAHCC}
We define the functional
$$
\CA _H^{\rm CC} : \CL_0 ^{\rm Carnot} (J^1B,H;o_{J^1B}) \to \R
$$
to be the restriction of $\widetilde \CA _H$ to $\CL_0 ^{\rm Carnot} (J^1B,H;o_{J^1B})$.
 (Here 'CC' stands for Carnot and Carath\'eodory.)
 \end{defn}

\section{Vertical critical point analysis}

We now specialize the discussion around \eqref{eq:tildeCAH} in Subsection \ref{subsec:action-functional} of the introduction
to the function 
$$
\CA_H^{\rm CC}:\CL_0^{\rm Carnot}(J^1B,H;o_{J^1B}) \to \R
$$
of the following diagram
\be\label{eq:diagram-ACCH}
\xymatrix{ \CL ^{\rm Carnot}_{0}(J^1B,H;o_{J^1B}) \ar[d]^{\Pi} \ar[r]^(0.8){\CA _H^{\rm CC}} & \R \\
B
}
\ee
for the setting of generating functions: the vertical arrow map
$$
\Pi := \pi_B \circ {\rm ev}_{1} : \CL_{0}^{\rm Carnot}(J^1B, H; o_{J^1B}) \rightarrow B, \qquad \gamma \mapsto 
\pi_B\circ\gamma(1)
$$
is a fibration and the horizontal arrow map is the real valued functional
 $\CA_H^{\rm CC}$.
We want to regard this diagram as the contact counterpart of the diagram associated to
Weinstein's de-approximation.

\subsection{Vertical critical points: equation of motion}
\label{sec:equationofmotion}

We first need to determine the vertical
critical points of the functional $\CA _H^{\rm CC}$ of
 the fibration $\Pi$ above. By definition, we have
$$
VT_\gamma \CE = T_\gamma(\Pi^{-1}(q))
$$
for all elements $\gamma \in \CL_0^{\rm Carnot}(J^1B,H;o_{J^1B})$
satisfying the boundary condition
$$
\pi_B\circ\gamma(1) = q
$$
at each given point $q \in B$.
Note that the fiber $\Pi^{-1}(q)$ at $q \in B$ is expressed by
\bea\label{eq:Pi-1q}
\Pi^{-1}(q) & = & \{ \gamma \in  \CL ^{\rm Carnot}_{0}(J^1B, H; o_{J^1B}) \; \mid \; \gamma(1) \in J^1_q B \} \nonumber\\
& =: & \CL ^{\rm Carnot} (J^1B,H; o_{J^1B}, J^1_q B).
\eea
\begin{rem} We would like to attract readers' attention that while $o_{J^1B}$ is a Legendrian submanifold,
the fiber $J_q^1B$ is a co-Legendrian submanifold of dimension $\dim B+1$. 
We refer to \cite{oh:entanglement1} for the definition.
\end{rem}
Note that the first variation formula \eqref{eq:perturbed-1st-variation} gives the vertical derivative
\bea\label{eq:dvertAA}
\delta^{\rm vert} \CA _H^{\rm CC}  (\gamma) (\eta)
&=& -\int_0^1 d\lambda ((d\phi_H^t)^{-1} \eta(t), (d\phi_H^t)^{-1} (\dot \gamma (t) - X_H (\gamma (t)))) dt  
\nonumber\\
&{}& + e^{g_{(\psi^1_H)^{-1}}(\gamma(0))} \lambda_{\gamma(0)} (\eta(0)) - \lambda_{\gamma(1)} (\eta(1))
+ dz_{\gamma(1)} (\eta(1)) \nonumber\\
&{}& 
\eea
for a variation $\eta \in T_\gamma \Pi^{-1}(q)$.  Note that
\beastar
- \lambda_{\gamma(1)} (\eta(1)) + dz_{\gamma(1)} (\eta(1)) 
& = & \theta(\pi_{T^*B}(\gamma(1))) (d\pi_{T^*B}(\eta(1))\\
& = & \pi_{T^*B}(\gamma(1))(d\pi_B(\eta(1))= \langle p(\gamma(1)), d\pi_B(\eta(1)) \rangle.
\eeastar
On the other hand we have
\be\label{eq:eta0-eta1}
\eta (0) \in To_{J^1B} \subset \xi, \qquad  \eta (1) \in T(J_q^1B)
\ee
by definition and so we obtain
$$
\lambda (\eta (0)) = 0, \qquad \langle p(\gamma(1)), d\pi_B(\eta(1)) \rangle = 0
$$
for all $\eta \in T\Pi^{-1}(q)$.

Combining the above discussion, the first variation formula is reduced to
$$
\delta^{\rm vert} \CA _H^{\rm CC}  (\gamma) (\eta)
= -\int_0^1 d\lambda 
\left((d\phi_H^t)^{-1} \eta(t), (d\phi_H^t)^{-1} (\dot \gamma (t) - X_H (\gamma (t)))\right) dt. 
$$
It means that
\bea \label{eq:vert-crit-pt}
\delta^{\rm vert} \CA _H^{\rm CC}  (\gamma) \equiv 0
&\Longleftrightarrow& ((d\phi_H^t)^{-1}(\dot \gamma (t) - X_H (\gamma(t)) ) )^\pi = 0 \nonumber \\
&\Longleftrightarrow& (d\phi^t_H)^{-1}(\dot \gamma (t) - X_H (\gamma(t))) 
= f(t) R_\lambda (\overline{\gamma} (t))
\eea
for some function $f=f(t)$.
On the other hand, by definition any element $\gamma \in \CL_{0}^{\rm Carnot}(J^1B, H; o_{J^1B})$ 
satisfies
\be
0 =  \lambda \left(\dot \gamma (t)) + H_t (\gamma (t)\right).
\ee
Furthermore, we also have
$$
\lambda(\dot \gamma (t)) + H_t (\gamma (t)) =  \lambda\left(\dot \gamma (t) - X_H (\gamma (t))\right)
$$
since $\lambda(X_H) = - H$ by definition of $X_H$. By combining  the last three equalities, we obtain
$$
0=  \lambda\left(\dot \gamma (t) - X_H (\gamma (t))\right) = \lambda \left(f(t) d\phi^t_H(R_\lambda(\overline{\gamma}(t)))\right) = f(t) e^{g_{\phi_H^t} (\overline \gamma (t))}
 $$
and hence  $ f(t) \equiv 0 $.

We summarize the above discussion into the following.
\begin{prop}[Equation of motion]
\label{prop:equationofmotion}
An element
$\gamma \in \CL ^{\rm Carnot}_{0} (J^1B,H; o_{J^1B})$ is a vertical critical point of
$\CA _H^{\rm CC}$ if and only if it is a contact Hamiltonian chord,
that is, $\gamma$ satisfies
$$
\dot \gamma (t) = X_H(\gamma (t)), \quad \gamma(0) \in o_{J^1B}.
$$
\end{prop}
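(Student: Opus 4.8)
The plan is to read off the equation of motion directly from the first variation formula \eqref{eq:perturbed-1st-variation}, specialized to the effective functional $\widetilde{\CA}_H = -\CA_H + z\circ\ev_1$ and then restricted, along the fibration $\Pi$, to the Carnot path space. So I would first write out $\delta^{\rm vert}\CA_H^{\rm CC}(\gamma)(\eta)$ for a variation $\eta$ tangent to the fiber $\Pi^{-1}(q)$; this reproduces \eqref{eq:dvertAA}, namely the bulk term $-\int_0^1 d\lambda\bigl((d\phi_H^t)^{-1}\eta(t),\,(d\phi_H^t)^{-1}(\dot\gamma(t)-X_H(\gamma(t)))\bigr)\,dt$ together with the two endpoint contributions built from $\lambda(\eta(0))$ and from $-\lambda(\eta(1))+dz(\eta(1))$.

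The second step is to kill the boundary terms using the boundary conditions defining $\Pi^{-1}(q)=\CL^{\rm Carnot}(J^1B,H;o_{J^1B},J^1_qB)$: one has $\eta(0)\in T_{\gamma(0)}o_{J^1B}$ and $\eta(1)\in T_{\gamma(1)}(J^1_qB)$. Since $o_{J^1B}$ is Legendrian, $\lambda(\eta(0))=0$; and writing $dz-\lambda=\pi_{T^*B}^*\theta$ one recognizes $-\lambda_{\gamma(1)}(\eta(1))+dz_{\gamma(1)}(\eta(1))=\langle p(\gamma(1)),\,d\pi_B(\eta(1))\rangle$, which vanishes because $\eta(1)$ is tangent to the fiber $J^1_qB$ over the fixed point $q$, so $d\pi_B(\eta(1))=0$. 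Hence $\delta^{\rm vert}\CA_H^{\rm CC}(\gamma)(\eta)$ reduces to the bulk term alone.

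Next I would extract the pointwise equation. Since $\CL^{\rm Carnot}$ is a Frechet submanifold with $d_\gamma\Gamma$ surjective (Theorem \ref{thm:Carnot-frechet} and its proof), every smooth section $v$ of $\gamma^*\xi$ supported away from the endpoints occurs as the $\xi$-component of some fiber-tangent $\eta$: one solves the linear first-order ODE $\dot h + R_\lambda[H_t]h = -d\lambda(v,\dot\gamma-X_H(\gamma))$, $h(0)=0$, for the Reeb component $h=\lambda(\eta)$, which keeps $\eta$ tangent to $\CL^{\rm Carnot}$ and, since $R_\lambda=\partial_z$ is tangent to $J^1_qB$, also keeps $\eta(1)$ in the fiber. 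Nondegeneracy of $d\lambda|_\xi$ then gives $\delta^{\rm vert}\CA_H^{\rm CC}(\gamma)\equiv 0$ if and only if $\bigl((d\phi_H^t)^{-1}(\dot\gamma-X_H(\gamma))\bigr)^\pi=0$, i.e. $(d\phi_H^t)^{-1}(\dot\gamma-X_H(\gamma))=f(t)R_\lambda(\overline\gamma(t))$ for some scalar function $f$. Finally, the Carnot constraint eliminates this residual Reeb degeneracy: applying $\lambda$ to $\dot\gamma-X_H(\gamma)=f(t)\,d\phi_H^t(R_\lambda(\overline\gamma(t)))$ and using $(\phi_H^t)^*\lambda=e^{g_{\phi_H^t}}\lambda$ and $\lambda(R_\lambda)=1$ yields $0=\lambda_\gamma(\dot\gamma)+H_t(\gamma)=f(t)\,e^{g_{\phi_H^t}(\overline\gamma(t))}$, hence $f\equiv 0$ and $\dot\gamma=X_H(\gamma)$; the initial condition $\gamma(0)\in o_{J^1B}$ is built into the space. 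The converse is immediate: if $\dot\gamma=X_H(\gamma)$ and $\gamma(0)\in o_{J^1B}$, then $\lambda(\dot\gamma)+H(\gamma)=0$ so $\gamma\in\CL^{\rm Carnot}_0$, the bulk term vanishes since $\dot\gamma-X_H(\gamma)=0$, and the boundary terms vanish by the second step.

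I expect the only delicate point to be the third step --- knowing that the space of vertical variations is rich enough to test the $\xi$-part of the Euler--Lagrange equation at every parameter $t$, which is precisely where the Frechet submanifold structure and the explicit solvability of the linearized constraint from Section \ref{sec:Carnot-pathspace} enter. The rest is bookkeeping; the main source of friction is the conformal factor $e^{g_{\phi_H^t}}$, which has to be carried along rather than dropped prematurely.
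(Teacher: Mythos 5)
Your proposal is correct and follows essentially the same route as the paper: the first variation formula \eqref{eq:dvertAA}, cancellation of the boundary terms via $\eta(0)\in To_{J^1B}\subset\xi$ and $\eta(1)\in T(J^1_qB)$, reduction to $((d\phi_H^t)^{-1}(\dot\gamma-X_H(\gamma)))^\pi=0$, and finally the Carnot constraint forcing $f(t)e^{g_{\phi_H^t}(\overline\gamma(t))}=0$ hence $f\equiv 0$. Your third step, explicitly constructing fiber-tangent variations with prescribed $\xi$-component by solving the linearized constraint ODE for the Reeb component, is a welcome extra point of rigor that the paper leaves implicit, but it does not change the argument's structure.
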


\subsection{Vertical critical point set and horizontal derivative}

We now consider the vertical critical point set of the diagram \eqref{eq:diagram-ACCH}, which 
we denote by
\beastar
\Sigma_{\CA _H^{\rm CC}} & = & 
\left \{ \gamma  \in \CL ^{\rm Carnot}_{0} (J^1B,H; o_{J^1B}) \; \mid \; 
\delta^{\rm vert} \CA _H^{\rm CC} (\gamma) = 0 \right \}
\\
& = & \left\{ \gamma  \in \CL (J^1B) \; \mid \; \dot \gamma = X_H (\gamma), \; 
\gamma(0) \in o_{J^1B} \right \}.
\eeastar
We consider the restriction of $\Pi$ in \eqref{eq:diagram-ACCH} to 
the subset $\Sigma_{\CA _H^{\rm CC}}$.

We write the fibration $\Pi$ given in \eqref{eq:diagram-ACCH} 
as $\Pi:\CE \to B$ with
$$
\CE: = \CL^{\text{\rm Carnot}}_0(J^1B,H;o_{J^1B}) 
$$
and take an Ehresmann connection
$$
T\CE = VT \CE \oplus HT \CE
$$
where the vertical tangent space of $\Pi$ at $\gamma$ 
is nothing but $\ker d_\gamma \Pi$, and $HT_\gamma \CE \cong T_{\pi_B(\gamma(1))}B$.
We have the associated splitting of the cotangent bundle
$$
T^*\CE = \left(HT\CE\right)^\perp \oplus \left(VT\CE\right)^\perp
$$
where we denote by
$$
\left(HT_\gamma\CE\right)^\perp, \, \,  \left(VT_\gamma\CE\right)^\perp \subset T_\gamma^*\CE
$$
the annihilators of $HT_\gamma\CE, \, \,  VT_\gamma\CE \subset T_\gamma \CE$ respectively.
We mention that 
via the map
$d(\pi_B \circ \ev_1)$,
$\left(HT_\gamma\CE\right)^\perp$ can be identified with the \emph{vertical cotangent
space}, at least formally.

\begin{defn}[The vertical cotangent bundle $\CV \to \CE$]
We form the union 
$$
\CV = \bigcup_{\gamma \in \CE} \{\gamma\} \times \CV_\gamma
$$
and regard it as a vector bundle over $\CE$
with its fiber given by the \emph{vertical cotangent space} identified
with the fiber
$$
\CV_\gamma :=
\left(VT_\gamma
\CL^{\text{\rm Carnot}}_0(J^1B,H;o_{J^1B})\right)^*.
$$
\end{defn} 
We regard the map
$$
\gamma \mapsto \delta^{\rm vert} \CA _H^{\rm CC}(\gamma)
$$
as a section of the (infinite dimensional) vector bundle $\CV \to \CE$.
Obviously if the map
$\delta^{\rm vert} \CA _H^{\rm CC}: \CE \to \CV$
is transversal to the zero section, then
$\Sigma_{\CA _H^{\text{\rm CC}}}$
becomes a smooth embedded submanifold of dimension $n$.

Even without the
transversality hypothesis, we can still say something about
the image by the $\delta \CA_H^{\text{\rm CC}}$ of $\Sigma_{\CA_H^{\text{\rm CC}}}$ as follows.
Note that at each $\gamma \in \Sigma_{\CA _H^{\rm CC}}$, we derive
\bea
\delta (\CA _H^{\rm CC} |_{\Sigma_{\CA _H^{\rm CC}}}) (\gamma) (\eta)
&=& p (\gamma(1)) dq_{\gamma(1)} (\eta(1))\label{eq:T*B-projection}\\
\CA _H^{\rm CC} (\gamma) & = & - \CA _H (\gamma) + z(\gamma(1)) = z(\gamma(1))
\label{eq:z-projection}
\eea
from the first variation formula \eqref{eq:perturbed-1st-variation} and the definition of $\CA_H^{\text{\rm CC}}$.
Therefore the map
$$
\iota_{\CA _H^{\rm CC}} : \Sigma_{\CA _H^{\rm CC}} \rightarrow J^1B
$$
is well-defined and written as
\beastar
\iota_{\CA _H^{\rm CC}}(\gamma) & = & \left(\Pi (\gamma), \delta \CA _H^{\rm CC} (\gamma), \CA _H^{\rm CC} (\gamma)\right)\\
& = & (q(\gamma (1)), p(\gamma (1)), z(\gamma (1))) = \gamma (1) \in \psi^1_H (o_{J^1B}).
\eeastar
Since any point $y \in \psi^1_H (o_{J^1B})$ can be written as $y = \gamma(1)$  by definition,
we have proved the following general proposition \emph{without knowing that $\Sigma_{\CA _H^{\rm CC}}$
is a smooth manifold.}

\begin{prop}\label{prop:image-SigmatildeAH} For any $H$, the map $\iota_{\CA _H^{\rm CC}}$ is a bijective
continuous map such that
$$
\operatorname{Image}\left(\iota_{\CA _H^{\rm CC}}\right) = \psi^1_H (o_{J^1B})
$$
as a set.
\end{prop}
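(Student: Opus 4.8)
The plan is to leverage the explicit computations carried out in the preceding two subsections, so that the proof becomes a matter of assembling pieces rather than of new analysis. The key input is the identity recorded just before the statement: for $\gamma \in \Sigma_{\CA_H^{\rm CC}}$ one has
$$
i_{\CA_H^{\rm CC}}(\gamma) = \left(\Pi(\gamma),\, \delta\CA_H^{\rm CC}(\gamma),\, \CA_H^{\rm CC}(\gamma)\right) = (q(\gamma(1)), p(\gamma(1)), z(\gamma(1))) = \gamma(1),
$$
which follows from \eqref{eq:T*B-projection}, \eqref{eq:z-projection} together with $\Pi(\gamma) = q(\gamma(1))$ coming from the definition $\Pi = \pi_{J^1B}\circ\ev_1$. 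Thus $i_{\CA_H^{\rm CC}}$ is nothing but the restriction of the time-one evaluation map $\ev_1$ to the vertical critical set $\Sigma_{\CA_H^{\rm CC}}$, and everything reduces to understanding $\Sigma_{\CA_H^{\rm CC}}$ as a set.

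First I would record a clean set-theoretic description of $\Sigma_{\CA_H^{\rm CC}}$. By the Equation of Motion proposition, $\gamma \in \Sigma_{\CA_H^{\rm CC}}$ if and only if $\dot\gamma = X_H(t,\gamma)$ and $\gamma(0) \in o_{J^1B}$; since $H$ is compactly supported (the standing assumption of this section) the contact Hamiltonian flow $\psi_H^t$ is complete, so every such $\gamma$ has the form $\gamma(t) = \psi_H^t(\gamma(0))$ with $\gamma(0) \in o_{J^1B}$, and conversely every curve of this form lies in $\Sigma_{\CA_H^{\rm CC}}$. Hence $\gamma \mapsto \gamma(0)$ is a bijection $\Sigma_{\CA_H^{\rm CC}} \to o_{J^1B}$, and under this identification $i_{\CA_H^{\rm CC}}$ is identified with the restriction $\psi_H^1|_{o_{J^1B}}$ of the diffeomorphism $\psi_H^1$ of $J^1B$.

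From here the three assertions follow at once. Surjectivity onto $\psi_H^1(o_{J^1B})$, together with the image identity, holds because any $y \in \psi_H^1(o_{J^1B})$ equals $\psi_H^1(p)$ for some $p \in o_{J^1B}$, and then the trajectory $t\mapsto \psi_H^t(p)$ is a point of $\Sigma_{\CA_H^{\rm CC}}$ mapping to $y$. Injectivity holds because $\psi_H^1$ is a diffeomorphism: if $i_{\CA_H^{\rm CC}}(\gamma_1) = i_{\CA_H^{\rm CC}}(\gamma_2)$ then $\gamma_1(1) = \gamma_2(1)$, hence $\gamma_1(0) = (\psi_H^1)^{-1}(\gamma_1(1)) = (\psi_H^1)^{-1}(\gamma_2(1)) = \gamma_2(0)$, and then uniqueness for the ODE $\dot\gamma = X_H(t,\gamma)$ forces $\gamma_1 = \gamma_2$. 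Continuity of $i_{\CA_H^{\rm CC}}$ is inherited from continuity of $\ev_1\colon \CL(J^1B) \to J^1B$ in the Fréchet ($C^\infty$) topology, restricted to the Fréchet submanifold $\CL_0^{\rm Carnot}(J^1B,H;o_{J^1B})$ provided by Theorem \ref{thm:Carnot-frechet}; equivalently, via the identification above, $i_{\CA_H^{\rm CC}}$ is the smooth map $\psi_H^1|_{o_{J^1B}}$.

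The one point to be careful about — rather than a genuine obstacle — is that the statement deliberately imposes no transversality or nondegeneracy hypothesis, so the argument must avoid invoking any smooth-manifold structure on $\Sigma_{\CA_H^{\rm CC}}$ (cf. Remark \ref{rem:nonsmooth}); it should use only the description of $\Sigma_{\CA_H^{\rm CC}}$ as the set of $H$-Hamiltonian chords starting on $o_{J^1B}$ and the already-computed value $i_{\CA_H^{\rm CC}}(\gamma) = \gamma(1)$. No new estimates are required.
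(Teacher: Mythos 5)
Your argument is correct and follows essentially the same route as the paper: both reduce the claim to the computed identity $i_{\CA_H^{\rm CC}}(\gamma) = \gamma(1)$ together with the description of $\Sigma_{\CA_H^{\rm CC}}$ as the set of Hamiltonian chords issued from $o_{J^1B}$, so that $i_{\CA_H^{\rm CC}}$ becomes $\psi_H^1|_{o_{J^1B}}$ under $\gamma \mapsto \gamma(0)$. The only difference is that you spell out the injectivity (via uniqueness of the flow) and continuity steps that the paper leaves implicit, which is a welcome but not substantively different elaboration.
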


\section{Generating function transversality and the second variation}

Although the map $\iota_{\CA _H^{\rm CC}}$ is a bijective continuous map,
 we cannot talk about its smoothness as a map unless we have
some transversality result in advance so that $\Sigma_{\CA _H^{\rm CC}}$ carries
a smooth structure. This proof of smoothness  is now in order.

\subsection{Carnot path space and gauge transformation}

To study the aforementioned transversality, we need to consider the
second variation of the action functional  $\CA_H^{\text{\rm CC}}$.
We first recall the definition of the gauge transformation $\Psi_H=(\Phi_H)^{-1}$
given in Definition \ref{defn:gauge-transform}, and
the identity \eqref{eq:CAHu=CAw}
$$
\CA_H(\gamma) = \CA_0(\overline \gamma)
$$
for a general path $\gamma:[0,1] \to J^1B$. 
Therefore we will work with the expression $\CA_0(\overline \gamma)$
for our derivation of the second variation.

\begin{lem} A path $\gamma$ belongs to $\CL^{\text{\rm Carnot}}(J^1B,H)$ if and only if
the gauge-transformed curve
$$
\overline \gamma(t) = (\phi_H^t)^{-1}(\gamma(t)) \, \left(= \Phi_H(\gamma)(t)\right)
$$
is horizontal, i.e., $\dot{\overline \gamma}(t) \in \xi_{\overline \gamma(t)}$. 
\end{lem}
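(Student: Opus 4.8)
The statement is a pointwise identity about the gauge transformation $\Phi_H:\gamma\mapsto\overline\gamma$, and the plan is to prove it by a direct differentiation of the composition $\gamma(t)=\phi_H^t(\overline\gamma(t))$, keeping track of the conformal factor. The point to record is the refined identity
\begin{equation*}
\lambda_{\gamma(t)}(\dot\gamma(t)) + H_t(\gamma(t)) \;=\; e^{g_{\phi_H^t}(\overline\gamma(t))}\,\lambda_{\overline\gamma(t)}(\dot{\overline\gamma}(t)),
\end{equation*}
which is the infinitesimal (pointwise) counterpart of Lemma \ref{lem:CAHu=CAw}, and from which the lemma follows at once by positivity of $e^{g_{\phi_H^t}}$.

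\textbf{Step 1: differentiate the composition.} Recall from the Notation that $\phi_H^t=\psi_H^t(\psi_H^1)^{-1}$ satisfies $\frac{d}{dt}\phi_H^t(y)=X_{H_t}(\phi_H^t(y))$ — it is the ``final-value'' contact Hamiltonian flow, differing from $\psi_H^t$ only in its initial condition $\phi_H^0=(\psi_H^1)^{-1}$. Writing $\gamma(t)=\phi_H^t(\overline\gamma(t))$ and applying the chain rule gives
\begin{equation*}
\dot\gamma(t)\;=\;X_{H_t}(\gamma(t)) + d\phi_H^t\big(\dot{\overline\gamma}(t)\big),
\qquad\text{i.e.}\qquad
\dot\gamma(t)-X_{H_t}(\gamma(t))\;=\;d\phi_H^t\big(\dot{\overline\gamma}(t)\big).
\end{equation*}

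\textbf{Step 2: pair with $\lambda$ and use the conformal exponent.} Apply $\lambda_{\gamma(t)}$ to both sides. On the left, using the convention $H=-\lambda(X_H)$ of the \textbf{Conventions and Notations} section, $\lambda_{\gamma(t)}(\dot\gamma(t)-X_{H_t}(\gamma(t)))=\lambda_{\gamma(t)}(\dot\gamma(t))+H_t(\gamma(t))$. On the right, by the very definition $(\phi_H^t)^*\lambda=e^{g_{\phi_H^t}}\lambda$ of the conformal exponent,
\begin{equation*}
\lambda_{\gamma(t)}\big(d\phi_H^t(\dot{\overline\gamma}(t))\big)
=\big((\phi_H^t)^*\lambda\big)_{\overline\gamma(t)}\big(\dot{\overline\gamma}(t)\big)
=e^{g_{\phi_H^t}(\overline\gamma(t))}\,\lambda_{\overline\gamma(t)}\big(\dot{\overline\gamma}(t)\big),
\end{equation*}
which yields the displayed identity. (This is the same mechanism already used in the computation preceding \eqref{eq:vert-crit-pt}, specialized from $\dot\gamma-X_H=f\,R_\lambda\circ\overline\gamma$ to a general variation.)

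\textbf{Step 3: conclude.} Since $e^{g_{\phi_H^t}(\overline\gamma(t))}>0$ for all $t$, the identity shows that $\lambda_{\gamma(t)}(\dot\gamma(t))+H_t(\gamma(t))=0$ for all $t$ — i.e.\ $\gamma\in\CL^{\text{\rm Carnot}}(J^1B,H)$ — if and only if $\lambda_{\overline\gamma(t)}(\dot{\overline\gamma}(t))=0$ for all $t$, i.e.\ $\dot{\overline\gamma}(t)\in\xi_{\overline\gamma(t)}$; this is exactly the equivalence of \eqref{eq:Carnot-path} and \eqref{eq:horizontal}. I do not anticipate a genuine obstacle here: the only things to be careful about are the sign convention $H=-\lambda(X_H)$, the fact that $\phi_H^t$ (not $\psi_H^t$) is the correct family appearing in $\overline\gamma(t)=(\phi_H^t)^{-1}(\gamma(t))$, and invoking the \emph{pointwise} strict positivity of the conformal factor rather than merely the integrated statement of Lemma \ref{lem:CAHu=CAw}.
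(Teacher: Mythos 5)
Your proposal is correct and follows essentially the same route as the paper: the paper differentiates $\overline\gamma(t)=(\phi_H^t)^{-1}(\gamma(t))$ to get $\dot{\overline\gamma}=d(\phi_H^t)^{-1}(\dot\gamma-X_{H_t}(\gamma))$ and then applies $\lambda$ using the conformal exponent of $(\phi_H^t)^{-1}$, whereas you differentiate the composition in the opposite direction and use the conformal exponent of $\phi_H^t$; the two pointwise identities are reciprocal to one another and both conclude by strict positivity of the conformal factor.
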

\begin{proof} A direct computation shows
$$
\frac{d}{dt} \overline \gamma(t) = d(\phi_H^t)^{-1}\left(\dot \gamma(t) - X_{H_t}(\gamma(t))\right).
$$
By evaluating $\lambda$ against this vector, we obtain
$$
\lambda(d(\phi_H^t)^{-1}(\dot \gamma(t) - X_{H_t}(\gamma(t)))) 
= e^{g_{(\phi_H^t)^{-1}}(\gamma(t))}\left(\lambda(\dot \gamma) + H_t(\gamma)\right).
$$
Therefore $\lambda(\dot{\overline \gamma}) = 0 $ if and only if $\lambda(\dot \gamma) + H_t(\gamma) = 0$.
This finishes the proof.
\end{proof}

Motivated by this lemma, we consider the set of horizontal curves 
\be\label{eq:CLpi}
\CL^\pi_0 (J^1B;\psi_H(o_{J^1B})) : = \{ \nu \mid \dot \nu \in \xi, \; \nu (0) \in \psi_H (o_{J^1B}) \}
\ee
so that the curves of the type $\nu = \overline \gamma$ is contained therein .
The  gauge transformation $\Psi_H$ and its inverse $\Phi_H$ relate the two path spaces
\be\label{eq:pi-to-Carnot}
\CL^\pi_0 (J^1B;\psi_H(o_{J^1B})) \stackrel{ \Psi_H}{\longrightarrow} 
\CL^{\rm Carnot}_0 (J^1B,H ;o_{J^1B})
\ee
in one-to-one correspondence, which is a Fr\'echet diffeomorphism with respect to
the smooth structures given by Theorem \ref{thm:Carnot-Frechet}.

\subsection{Constrained second variation}
\label{sec:second-variation}

In the rest of this section, we will mainly work with the space $\CL^\pi_0 \left(J^1B;\psi_H(o_{J^1B})\right)$
of horizontal paths utilizing the one-to-one correspondence \eqref{eq:pi-to-Carnot}.
We define
\be\label{eq:CApi}
\CA^\pi(\nu) := \CA_H^{\text{\rm CC}}\circ \Psi_H(\nu) = - \CA_0 (\nu) + z(\nu(1)) = z(\nu(1)).
\ee
Recalling $\phi_H^1 = \id$, the vertical tangent vector $\eta \in VT_\nu (\CL^\pi_0(J^1B;\psi_H(o_{J^1B})))$ 
satisfies the same boundary condition tangent to the fiber,
\be\label{eq:nu-bdycondition}
\eta(1) \in T_{\nu(1)}(J^1_qB)
\ee
where $q = \pi(\nu(1))$.

To take the second variation of $\CA^\pi$, we take an Ehresmann connection as follows.
Following \cite[Section 6]{LOTV}, on a one-jet bundle $J^1B$, we introduce a set of horizontal vector fields
$$
\frac{D}{\del q_i} := \frac{\del}{\del q_i} + p_i \frac{\del}{\del z}
$$
which are tangent to $\xi$. Together with $\frac{\del}{\del p_i}$, 
the set $\left\{ \frac{D}{\del q_i}, \frac{\del}{\del p_i} \right\}$ provides a Darboux frame of the
contact distribution $\xi$, i.e., a frame of $\xi$
satisfying $d\lambda( \frac{D}{\del q_i}, \frac{\del}{\del p_j}) = \delta_{ij}$
on the chart of canonical coordinates $(q_i,p_i,z)$. (In \cite{LOTV}, $\frac{D}{\del q_i}$ is denoted by $D_i$
and there is a sign difference in the choice of canonical symplectic form on $T^*B$.)

After gauge transformation, the diagram \eqref{eq:diagram-ACCH} is transformed to
\be\label{eq:diagram-Api}
\xymatrix{\CL ^\pi_{0}(J^1B,\psi_H^1(o_{J^1B})) \ar[d]^{\Pi\circ \Psi_H} \ar[r]^(0.8){\CA^\pi} & \R \\
B
}
\ee
\begin{choice}[Choice of Ehresmann connection of $\Pi\circ \Psi_H$] We take  
the Ehresmann connection of $\Pi\circ \Psi_H$ in the diagram \eqref{eq:diagram-Api} 
so that the horizontal tangent space of its associated splitting
$$
T(\CL^\pi_0(J^1B;\psi_H^1(o_{J^1B}))) = VT(\CL^\pi_0(J^1B;\psi_H^1(o_{J^1B}))) \oplus HT(\CL^\pi_0(J^1B;\psi_H^1(o_{J^1B})))
$$
is given by
$$
HT_\nu (\CL^\pi_0(J^1B;\psi_H(o_{J^1B}))) := 
(d\Pi_\nu|_{HT_\nu \CE})^{-1}\left( \span \left\{ \frac{D}{\del q_j} \right\} \right)
$$
where we have
$$
\span \left\{ \frac{D}{\del q_j} \right\} \subset \xi_{\nu(1)}, \; q=\pi_B(\nu(1))
$$
 locally (over the canonical Darboux chart at $(q,p,z)$), 
 and then glue them together over $B$. See  Subsection \ref{subsec:Morsefamily},
 especially the definition \eqref{eq:iotaCS-intro}, for a relevant discussion on this description of
 horizontal space.
\end{choice}

For the calculation of the covariant derivatives, we fix the triad metric defined by
\[
g(\cdot,\cdot) = d\lambda(\cdot, J\cdot) + \lambda \otimes \lambda,
\]
associated to a contact triad $(J^1B,\lambda, J)$, and use its associated Levi-Civita connection $\nabla$.
  
Then the variation $\eta$ satisfies the final boundary condition 
\be\label{eq:eta(1)}
\eta(1) \in \span \left\{ \frac{D}{\del q_j} \right\}\Big|_{\nu(1)} \cong 
HT_\nu (\CL^\pi_0(J^1B;\psi_H(o_{J^1B})))
\ee
in terms of the aforementioned local Darboux local frame. We mention the isomorphism
$HT_\nu (\CL^\pi_0(J^1B;\psi_H(o_{J^1B}))) \cong 
T_{\pi_B(\nu(1))}B$  is induced by the parallel transport map
followed by
$$
\frac{D}{\del q_j} \mapsto  \frac{\del}{\del q_j}
$$
by considering the final value problem
\be\label{eq:hor-cond}
\nabla_t\eta=0,\quad \eta(1)=v \in \span \left\{ \frac{D}{\del q_j} \right\}\Big|_{\nu(1)}.
\ee

To study the aforementioned transversality condition, we need to
study the second partial variation $\delta^{\rm hor}\delta^{\rm vert}\CA^\pi(\nu)$, 
first in the vertical and then in the horizontal direction, of $\CA^\pi$ at $\nu$.
  
 We start with the following formula which is a special case of the formula  \eqref{eq:DGamma}.
\begin{lem} Let $\nu$ be a Reeb chord, and  consider the tangent vector
 $$
 \eta \in T_\nu \CL^\pi(J^1B;\psi_H(o_{J^1B})).
 $$
 Then $\eta$ satisfies
\be\label{eq:tangenteq}
d\lambda(\eta(t),\dot \nu (t)) + \frac{d}{dt}(\lambda(\eta(t))) = 0.
\ee
\end{lem}
\begin{proof} 
This follows from the formula \eqref{eq:DGamma} of the covariant variation
with $H=0$,  which in turn follows from the defining equation of 
the space $\CL^\pi(J^1B;\psi_H(o_{J^1B}))$ consisting of horizontal paths.
\end{proof} 

For the calculation of the mixed second derivative
$$
\delta^{\rm hor}\delta^{\rm vert}\CA^\pi(\nu), 
$$
we first translate Proposition \ref{prop:equationofmotion}
into the following after gauge transformation.
\begin{cor}\label{cor:constantcurve} We have
$$
\Crit \delta^{\text{\rm vert}}\CA^\pi = \{\nu \mid \nu(t) \equiv \text{\rm constant}\}.
$$
\end{cor}
  
\begin{prop} Let $q \in B$ be a given point and let $\gamma \in
\CL_0^{\text{\rm Carnot}}(J^1B,H;o_{J^1B})$ and $\nu = \overline \gamma$ so that
 $\delta^{\text{\rm vert}}\CA^\pi(\nu)=0$,  i.e., $\nu$ is
a horizontal Reeb chord starting from $\psi_H^1(o_{J^1B})$ satisfying
$\nu(1) \in J^1_q B$. Then we have
\be
\delta^{\rm hor} \delta^{\rm vert} \CA^\pi(\nu) (\eta_1, \eta_2)  
= \int_0^1 d\lambda\left(\nabla_t \eta_1, \eta_2 \right)dt \label{eq:2nd-variation1}
\ee
for any $\eta_1 \in VT_{\nu} \CL^\pi_0 (J^1B;\psi_H(o_{J^1B})), \, \eta_2 \in HT_{\nu} \CL^\pi_0 (J^1B;\psi_H(o_{J^1B}))$.
\end{prop}
\begin{proof} Consider two variations of the types
\be\label{eq:vertical-var}
\eta_1 \in VT_{\nu} \CL^\pi_0 (J^1B;\psi_H(o_{J^1B})),
\ee
\be\label{eq:hor-var}
\eta_2 \in HT_{\nu} \CL^\pi_0 (J^1B;\psi_H(o_{J^1B}))
\ee
and represent the latter $\eta_2$ by the variation satisfying $\nabla_t\eta_2=0$.
Since $\nu$ is a constant curve
by Corollary \ref{cor:constantcurve}, we derive
$$
 \eta_2(t)\in \xi_y
 $$
 from the final boundary condition $\eta(1)\in \xi_y$ for $y=\nu(1)\equiv \nu(t)$. For we have
$$
\frac{d}{dt}\lambda_{\nu(t)}(\eta_2(t))=\frac{d}{dt}\lambda_y(\eta_2(t)) = \lambda_y(\nabla_t\eta_2)=0
$$
which implies $\lambda_y(\eta_2(t))\equiv 0$ and hence follows the claim.

We take a two-parameter variation $\Gamma: (-\delta,\delta)^2 \to \CL^\pi_0(J^1B;\psi_H(o_{J^1B}))$ 
with $\Gamma = \Gamma(s,u)$ such that
$$
\Gamma (0,0) = \nu, \quad
\left. \frac{\del \Gamma}{\del s}\right|_{(s,u) = (0,0)} = \eta_1, \quad
\left. \frac{\del \Gamma}{\del u}\right|_{(s,u) = (0,0)} = \eta_2.
$$
For convenience, we also write $\Gamma(s,u)(t)$ as $\Gamma(s,u,t)$.
Then using the definition \eqref{eq:CApi} we compute
\be\label{eq:dAAds}
\frac{\del}{\del s} \CA^\pi (\Gamma(s,0))
= dz\left(\frac{\del \Gamma}{\del s}(s,0,1)\right) 
= \lambda\left(\frac{\del \Gamma}{\del s}(s,0,1)\right).
\ee
Here the second equality follows from the condition that
$\frac{\del \Gamma}{\del s} (s,0,1)$  be a \emph{vertical} variation which implies
it satisfies the boundary condition \eqref{eq:nu-bdycondition}.
Then using \eqref{eq:dAAds}, we compute the second derivative
\be\label{eq:mixedd2CA}
\left. \frac{\del^2}{\del u\del s}\right|_{(s,u) = (0,0)} \CA^\pi(\Gamma(s,u))
= \left. \frac{\del}{\del u} \left[ \lambda\left(\frac{\del \Gamma}{\del s}(s,u,1)\right)\right] \right|_{(s,u)=(0,0)}.
\ee
From \eqref{eq:tangenteq}, we derive
\beastar
 \lambda \left(\frac{\del \Gamma}{\del s}(s,u,1)\right) \Big|_{(s,u)=(0,0)}
& = &  \int_0^1 \frac{\del}{\del t} \left(\lambda \left(\frac{\del \Gamma}{\del s}(s,u,t)\right) \right)\, dt
\Big|_{(s,u)=(0,0)}\\
& =  &   \int_0^1 d\lambda \left( \frac{\del \Gamma}{\del t}(s,u,t), 
\frac{\del \Gamma}{\del s}(s,u,t) \right) dt \Big|_{(s,u)=(0,0)}
\eeastar
where for the first equality we also used the vanishing
$$
 \lambda \left(\frac{\del \Gamma}{\del s}(s,u,0)\right) \Big|_{(s,u)=(0,0)}
 = \lambda(\eta_1(0)) = 0
$$
by the boundary condition \eqref{eq:eta0-eta1}.
Therefore by swapping $u$ and $s$ in \eqref{eq:mixedd2CA}, we can rewrite
\beastar
\left. \frac{\del^2}{\del u\del s}\right|_{(s,u) = (0,0)} \CA^\pi(\Gamma(s,u)) & = & 
\left. \frac{\del^2}{\del s\del u }\right|_{(s,u) = (0,0)} \CA^\pi(\Gamma(s,u))\\
& = & \left. \frac{\del}{\del s} \int_0^1 d\lambda\left( \frac{\del \Gamma}{\del t}(s,u,t), \frac{\del \Gamma}{\del u}(s,u,t) \right) dt \right|_{(s,u)=(0,0)}.
\eeastar
We compute
\beastar
&& \left. \frac{\del}{\del s} \int_0^1 d\lambda\left( \frac{\del \Gamma}{\del t}(s,u,t), \frac{\del \Gamma}{\del u}(s,u,t) \right) dt \right|_{(s,u)=(0,0)}\\
&= & \left. \int_0^1 (\nabla_s d\lambda)\left( \frac{\del \Gamma}{\del t}(s,u,t), \frac{\del \Gamma}{\del u}(s,u,t) \right) dt \right|_{(s,u)=(0,0)} \\
& & + \left. \int_0^1 d\lambda\left( \nabla_s \frac{\del \Gamma}{\del t}(s,u,t), \frac{\del \Gamma}{\del u}(s,u,t) \right) dt \right|_{(s,u)=(0,0)} \\
& & + \left. \int_0^1 d\lambda\left( \frac{\del \Gamma}{\del t}(s,u,t), \nabla_s \frac{\del \Gamma}{\del u}(s,u,t) \right) dt \right|_{(s,u)=(0,0)} \\
&=& \left. \int_0^1 d\lambda\left( \nabla_s \frac{\del \Gamma}{\del t}(s,u,t), \frac{\del \Gamma}{\del u}(s,u,t) \right) dt \right|_{(s,u)=(0,0)} \\
&=& \int_0^1 d\lambda\left( \left. \nabla_s \frac{\del \Gamma}{\del t}(s,u,t) \right|_{(s,u)=(0,0)}, \eta_2(t) \right) dt.
\eeastar
Here the second equality follows from the fact that
$\left. \frac{\del \Gamma}{\del t}\right|_{(s,u)=(0,0)} = \dot \nu$, and Corollary \ref{cor:constantcurve}
which implies that $\nu$ is a constant curve valued in $\psi^1_H(o_{J^1B}) \cap J^1_q B$.
Moreover, using the torsion freeness of $\nabla$, we derive
$$
\nabla_s \frac{\del \Gamma}{\del t} \Big|_{(u,s) = (0,0)} = \nabla_t \eta_1.
$$
Therefore this finishes the proof of \eqref{eq:2nd-variation1}.
\end{proof}

To verify our desired transversality, it is sufficient to prove 
surjectivity of the partial Hessian
$$
\delta^{\rm hor} \delta^{\rm vert} \CA^\pi (\nu),
$$
since $\delta^{\rm hor} \delta^{\rm vert} \CA^\pi(\nu)$ represents its horizontal part. This is verified in the following lemma.
\begin{lem}\label{lem:surjectivity}
$\operatorname{Coker} \delta^{\rm hor} \delta^{\rm vert} \CA^\pi (\nu) = 0.$
\end{lem}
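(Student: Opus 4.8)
The plan is to reduce everything, via the fact established in the proof of the previous proposition that $\nu$ is the constant map at the point $y = \nu(1) \in \psi_H(o_{J^1B}) \cap J^1_q B$, to linear symplectic algebra in the contact fibre $(\xi_y, d\lambda)$. First I would fix the functional-analytic framework exactly as in the proof of Proposition \ref{prop:submersion}: pass to the $W^{1,2}$-completions of the relevant tangent spaces so that the linearization at $\nu$ of the section $\gamma \mapsto \delta^{\rm vert}\CA^\pi(\gamma)$ of the vertical cotangent bundle becomes a Fredholm operator (a first-order linear ODE operator with linear boundary conditions), whose transversality to the zero section is exactly the vanishing of the cokernel in the statement. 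Since $\nu$ is constant, $\nu^*TJ^1B$ is trivial and $\nabla_t = d/dt$; using the defining equation of $\CL^\pi_0(J^1B;\psi_H(o_{J^1B}))$ together with the Legendrian boundary condition one checks that a tangent vector there is a map $\eta:[0,1]\to \xi_y$ with $\eta(0) \in T_yL$, where $L := \psi_H(o_{J^1B})$, that $\eta$ is vertical precisely when $\eta(1) \in P := T_y(J^1_qB) \cap \xi_y = \operatorname{span}\{\partial/\partial p_j\}$, and that $\eta(1) \in H_y := \operatorname{span}\{\frac{D}{\del q_j}|_y\}$ for a horizontal one. From the Darboux frame $\{\frac{D}{\del q_j},\partial/\partial p_j\}$ one reads off that $P$ and $H_y$ are complementary Lagrangian subspaces of $(\xi_y, d\lambda)$, so that $P \oplus H_y = \xi_y$ and $d\lambda$ restricts to a perfect pairing $H_y \times P \to \R$; this is the single piece of geometric input that drives the argument.

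Next, by the second variation formula \eqref{eq:2nd-variation2} — which, as noted in the Remark after that proposition, holds with an arbitrary tangent vector in its second argument — an element of the cokernel is represented by a vertical $\ell$ satisfying $\int_0^1 d\lambda(\dot\eta,\ell)\,dt = 0$ for all tangent variations $\eta$, and the goal is to deduce $\ell = 0$. Testing first against the $\eta$ with $\eta(0) = \eta(1) = 0$ (these are admissible), $\dot\eta$ runs over all $\xi_y$-valued functions of mean zero, so nondegeneracy of $d\lambda|_{\xi_y}$ forces $\ell$ to be constant, say $\ell \equiv w \in \xi_y$; then for general $\eta$ the pairing becomes $\int_0^1 d\lambda(\dot\eta,w)\,dt = d\lambda(\eta(1) - \eta(0), w)$. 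Letting $\eta(1)$ run over $P$ (using vertical variations) and over $H_y$ (using horizontal variations) and invoking $P \oplus H_y = \xi_y$, we get $d\lambda(\xi_y, w) = 0$, whence $w = 0$ by nondegeneracy. This proves $\operatorname{Coker}\,\delta^{\rm hor}\delta^{\rm vert}\CA^\pi(\nu) = 0$.

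It is worth recording the mechanism in the language of the statement. Restricting to vertical variations alone, the computation above only yields $w \in P \cap T_yL = T_yL \cap T_y(J^1_qB)$, so the vertical Hessian has cokernel isomorphic to $T_yL \cap T_y(J^1_qB)$ — precisely the obstruction to transversality of $L$ with the fibre $J^1_qB$, which in general is nonzero. The horizontal directions compensate for exactly this: for $\eta_2$ horizontal and $w \in P \cap T_yL$, \eqref{eq:2nd-variation2} gives $d\lambda(\eta_2(1) - \eta_2(0), w) = d\lambda(\eta_2(1), w)$ since $T_yL$ is isotropic and contains both $\eta_2(0)$ and $w$, and as $\eta_2$ ranges over the horizontal tangent space, $\eta_2(1)$ ranges over all of $H_y$; the perfect pairing $d\lambda : H_y \times P \to \R$ then makes the induced map $H_y \to (P \cap T_yL)^*$ surjective. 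Thus $\delta^{\rm hor}\delta^{\rm vert}\CA^\pi(\nu)$ hits every class in the cokernel of the vertical Hessian, which is the assertion.

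The main obstacle I anticipate is entirely on the analytic side: making the notion of cokernel rigorous, i.e. choosing the Sobolev completions so that the linearized section is genuinely Fredholm and its cokernel is canonically identified with the space of vertical $\ell$ appearing above (the kernel of the formal adjoint). Once this bookkeeping is in place — it parallels the surjectivity argument in the proof of Proposition \ref{prop:submersion} and the regularity step following it — the remaining content is the two short linear-algebra computations in $(\xi_y, d\lambda)$, which are immediate.
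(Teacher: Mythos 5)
Your argument is correct in substance and runs along essentially the same lines as the paper's: both proofs feed the second variation formula with interior variations to force $\nabla_t\eta_1=0$ (covariant constancy, which in your constant-$\nu$ trivialization is literally $\ell\equiv w$), and then use the boundary term together with the splitting of $\xi_y$ into the two transverse Lagrangian subspaces $P=T_y(J^1_qB)\cap\xi_y$ and $H_y$ to kill the endpoint value. Your reduction to linear algebra in $(\xi_y,d\lambda)$ via the observation that $\nu$ is constant, and your identification of $T_yL\cap T_y(J^1_qB)$ as the cokernel of the vertical Hessian, make the mechanism more transparent than the paper's phrasing (``$\eta_1(1)$ must be tangent to the Reeb direction, but $\lambda(\eta_1)\equiv 0$''), though the content is the same. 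The one discrepancy worth flagging: the lemma asserts $\operatorname{Coker}\,\delta^{\rm hor}\delta^{\rm vert}\CA^\pi(\nu)=0$, i.e.\ that the pairing is nondegenerate in $\eta_1$ when tested against \emph{horizontal} $\eta_2$ alone, whereas your main computation tests against \emph{all} tangent variations and your closing sentence only concludes that the horizontal directions cover the cokernel of the vertical Hessian --- both of which give surjectivity of the full linearization (what the ensuing Corollary actually needs) but are formally weaker than the stated lemma. This is harmless because your own ingredients repair it immediately: the variations with $\eta(0)=\eta(1)=0$ that force $\ell\equiv w$ already lie in the horizontal tangent space as the paper defines it (all $a_j=0$), and then $w\in P$, $d\lambda(H_y,w)=0$ forces $w\in H_y\cap P=0$ by the perfect pairing $d\lambda\colon H_y\times P\to\R$, with no vertical test vectors used.
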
 
\begin{proof} Recall $\nu(t) \equiv y$ for all $t \in [0,1]$.
Suppose $\eta_2 \in HT_\nu \CL^\pi_0 (J^1B; \psi_H(o_{J^1B}))$ satisfies that 
$$
\delta^{\rm hor} \delta^{\rm vert} \CA^\pi (\eta_1,\eta_2) = 0
$$
for all $\eta_1 \in VT_\nu \CL^\pi_0 (J^1B; \psi_H(o_{J^1B}))$. Then we have
\be\label{eq:cokernel}
\int_0^1 d\lambda\left(\nabla_t \eta_1, \eta_2 \right)\, dt  = 0
\ee
for all $\eta_1\in VT_\nu \CL^\pi_0 (J^1B; \psi_H(o_{J^1B}))$. Using $\nabla_t \eta_2 = 0$, we rewrite
\beastar
0 & = &\int_0^1 d\lambda\left(\nabla_t \eta_1, \eta_2 \right)\, dt =
\int_0^1 \frac{d}{dt}\left(d\lambda(\eta_1, \eta_2) \right)\, dt\\
& = & d\lambda\left(\eta_1(1), \eta_2(1) \right) - d\lambda\left(\eta_1(0), \eta_2(0) \right).
\eeastar
Therefore we obtain
\be\label{eq:equality01}
 d\lambda\left(\eta_1(1), \eta_2(1) \right) = d\lambda\left(\eta_1(0), \eta_2(0) \right)
\ee
for all variation $\eta_1 \in VT_\nu \CL^\pi_0 (J^1B; \psi_H(o_{J^1B}))$. 
By definition, $\eta_i$ $i=1,2$ satisfy
$$
\eta_i(0) \in T_{\nu(0)}\psi_H^1(o_{J^1B})=T_y\psi_H^1(o_{J^1B})
$$
we have
$$
d\lambda(\eta_1(0),\eta_2(0))=0
$$
since $\psi_H^1(o_{J^1B})$ is Legendrian. Therefore it follows from \eqref{eq:equality01} and
the vanishing  $R_\lambda \intprod d\lambda = 0$ that
$$
d\lambda(\eta_1(1),\eta_2(1))=0
$$
for all $\eta_1(1) \in \xi_y$.
By the nondegeneracy of $d\lambda$ on $\xi_y$, this proves $\eta_2^\pi(1)=0$. 
 Since $\eta_2$ is assumed to be in
$HT_\nu \CL^\pi_0 (J^1B; \psi_H(o_{J^1B}))$, $\eta_2(1) = 0$.
Since $\eta_2$ is parallel, it implies $\eta_2= 0$. This finishes the proof of the lemma.
\end{proof}

\begin{cor}
The transversality of $\delta^{\rm vert}\CA^{\rm CC}_H$ to the zero section always holds. In particular,
$\Sigma_{\CA^{\rm CC}_H}$ is a smooth embedded submanifold of dimension $n$.
\end{cor}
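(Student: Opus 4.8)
The plan is to read the transversality assertion as the surjectivity of the linearization of the section
$$
s\colon \gamma \longmapsto \delta^{\rm vert}\CA_H^{\rm CC}(\gamma)\ \in\ \CE_\gamma=\big(T^{\rm vert}_\gamma\CL^{\rm Carnot}_0(J^1B,H;o_{J^1B})\big)^*
$$
at each of its zeros, and to reduce this, via the gauge transformation $\Phi_H$ together with $\CA^\pi=\CA_H^{\rm CC}\circ\Phi_H$ of \eqref{eq:CApi}, to a statement about $\CA^\pi$ at the constant curves. First I would recall from \eqref{eq:vert-crit-pt} and the ensuing discussion (the equation-of-motion characterization) that a zero of $s$ is a contact Hamiltonian chord $\gamma(t)=\psi_H^t(\gamma(0))$ with $\gamma(0)\in o_{J^1B}$, so that $\overline\gamma(t)=(\phi_H^t)^{-1}(\gamma(t))$ is the \emph{constant} curve $\nu\equiv\psi_H^1(\gamma(0))\in\psi_H^1(o_{J^1B})$, lying in $\psi_H^1(o_{J^1B})\cap J^1_qB$ when $\gamma(1)\in J^1_qB$. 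At such a $\gamma$ the vertical part of $ds$ is connection-independent and, by differentiating the first variation formula \eqref{eq:perturbed-1st-variation} (equivalently, \eqref{eq:dvertAA}), is represented by the full Hessian $d^2\CA^\pi(\nu)(\cdot,\cdot)$ regarded as a map $T_\nu\CL^\pi_0(J^1B;\psi_H(o_{J^1B}))\to\big(T^{\rm vert}_\nu\CL^\pi_0(J^1B;\psi_H(o_{J^1B}))\big)^*$; transversality is precisely its surjectivity.

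Next I would put this operator into the Fredholm framework already used in the proof of Proposition \ref{prop:submersion} and the theorem following it: the linearized section is of the same first-order ODE type as $d_\gamma\Gamma$ from Lemma \ref{prop:Carnot-frechet}, so after completing to the pair $W^{1,2}\to L^2$ it is Fredholm, and by the Fredholm alternative its surjectivity is equivalent to the vanishing of its cokernel. But the cokernel injects into
$$
\big\{\eta_1\in T^{\rm vert}_\nu\CL^\pi_0(J^1B;\psi_H(o_{J^1B}))\ :\ d^2\CA^\pi(\nu)(\eta_2,\eta_1)=0\ \text{ for all }\eta_2\in T^{\rm hor}_\nu\CL^\pi_0(J^1B;\psi_H(o_{J^1B}))\big\},
$$
which is trivial by the preceding Lemma ($\operatorname{Coker}\delta^{\rm hor}\delta^{\rm vert}\CA^\pi(\nu)=0$). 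Hence $ds$ is surjective on the $W^{1,2}$-completion, and one returns to the smooth category by the smooth-approximation and first-order-ODE-regularity argument used after Proposition \ref{prop:submersion}. This yields the transversality of $\delta^{\rm vert}\CA_H^{\rm CC}$ to the zero section, for every $H$.

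Finally, the implicit function theorem (applied, as throughout the paper, formally in the Fr\'echet setting) shows $\Sigma_{\CA_H^{\rm CC}}$ is a smooth embedded submanifold of $\CL^{\rm Carnot}_0(J^1B,H;o_{J^1B})$, of dimension equal to the Fredholm index of $ds$. To see this index is $n$, I would write the linearization at a constant $\nu$ as $A\oplus B$ with $A=\delta^{\rm vert}\delta^{\rm vert}\CA^\pi(\nu)$ the vertical Hessian and $B=\delta^{\rm hor}\delta^{\rm vert}\CA^\pi(\nu)$; the second variation formulas \eqref{eq:2nd-variation1}--\eqref{eq:2nd-variation2} identify $\ker A\cong\operatorname{Coker}A\cong T_{\nu(1)}\psi_H^1(o_{J^1B})\cap\big(T_{\nu(1)}J^1_qB\cap\xi\big)$ (the constant sections lying in this Lagrangian intersection), while $B$ has $n$-dimensional source — the horizontal lifts $\sum_j a_j\tfrac{D}{\del q_j}$ of $T_qB$ — and composes with the projection onto $\operatorname{Coker}A$ surjectively, through the nondegenerate pairing $d\lambda\big(\tfrac{D}{\del q_j},\tfrac{\del}{\del p_k}\big)=\delta_{jk}$ on $\xi$; the count $\dim\ker(A\oplus B)=\big(n-\dim\operatorname{Coker}A\big)+\dim\ker A=n$ then gives the dimension. (As a cross-check, once finite dimensionality is in hand the continuous bijection $i_{\CA_H^{\rm CC}}\colon\Sigma_{\CA_H^{\rm CC}}\to\psi_H^1(o_{J^1B})$ of Proposition \ref{prop:image-SigmatildeAH} is a diffeomorphism, its inverse being $y\mapsto(t\mapsto\psi_H^t(y))$.) The main obstacle is the functional-analytic bookkeeping of the middle step — fixing the Sobolev completion in which $ds$ is genuinely Fredholm and identifying its cokernel with the space the preceding Lemma kills — but this is entirely parallel to the analysis already performed for $d_\gamma\Gamma$, the one new input being that Lemma.
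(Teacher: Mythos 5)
Your proposal is correct and follows essentially the same route as the paper: the paper deduces this corollary immediately from the preceding lemma $\operatorname{Coker}\delta^{\rm hor}\delta^{\rm vert}\CA^\pi(\nu)=0$, having already declared that the desired transversality is equivalent to the surjectivity of $\delta^{\rm hor}\delta^{\rm vert}\CA^\pi(\nu)$ --- exactly your key input after the same gauge-transformation reduction to constant curves $\nu$. The Fredholm-completion bookkeeping and the index count you supply are additional detail that the paper leaves implicit (it gives no separate proof of the corollary, and justifies the dimension $n$ only through the identification of $\Sigma_{\CA^{\rm CC}_H}$ with $\psi^1_H(o_{J^1B})$ from Proposition \ref{prop:image-SigmatildeAH}, which you also note as a cross-check).
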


\part{Contact analogue of Laudenbach-Sikorav's generating function}

In this part, we give a construction of `canonical' $\GFQI$
of any  Hamiltonian isotope of the zero section  $o_{J^1B}$ of
the one-jet bundle $J^1B$ as a finite dimensional approximation of
contact Hamiltonian trajectories as the contact counterpart of
Laudenbach--Sikorav's broken Hamiltonian trajectory approximation given in \cite{laud-sikorav}.
We will imitate the construction from \cite[Section 2]{laud-sikorav} by suitably adapting
it to the current contact setting of the one-jet bundle $J^1B \cong T^*B \times \R$.

Since the general procedure of the two constructions is similar \cite[Section 2]{laud-sikorav}, 
we will focus on the different points and  modifications we need to handle  the current 
Legendrian case.

\section{Generating functions of Lagrangian submanifolds: review}
\label{sec:gfqi-generation}

In this section we review basic results on the \emph{generating functions quadratic at infinity} (GFQI) of
Legendrian submanifolds from \cite{sikorav:cmh,viterbo:generating, theret} with applications to
spectral invariants of the Viterbo-type in our mind via the stable Morse theory.  GFQI is
a particular type of a Morse family in Definition \ref{defn:Morsefamily}
defined on a \emph{vector bundle} which enables one to carry out the Morse theory
via iterative handle attaching operations 
\cite{chaperon,laud-sikorav,sikorav:cmh,viterbo:generating,traynor,theret}
or via the  Morse homology construction as in \cite{milinkovic1,milinkovic2}.

\subsection{Summary of GFQI construction}
\label{subsec:gfqi}

Let $\pi_E: E \to B$ be a vector bundle and $S: E \to \R$ be a function that is quadratic at infinity, i.e., there exists a fiberwise nondegenerate quadratic function $Q:E \to \R$ such that $S = Q$ outside a compact subset of $E$.
We define the subset of $\mathbb{R}$,
\be\label{eq:spectrum}
\Spec(S) = \{S(e) \in \R \mid dS(e) = 0 \}
\ee
and call it the \emph{spectrum} of $S$.
Since $B$ is assumed to be compact and $S$ is quadratic at infinity,
$\Spec(S) \subset \R$ is a compact subset of measure zero in general.

Consider Legendrian submanifolds $R$ in the 1-jet bundle $J^1B$ of a smooth
manifold $B$. Denote by $\frak{Leg}(J^1B)$ (resp. $\frak{Leg}_0(J^1B)$) the set of Legendrian submanifolds of $J^1B$
(resp. those Legendrian isotopic to the zero section $R_0 = o_{J^1B}$).

A generic $R$ transversely intersects the `zero wall'
$$
Z:= o_{T^*B} \times \R \subset J^1B = T^*B\times \R.
$$
Let $\pi_R: R \to T^*B$ be  the natural Lagrangian projection which is the restriction to $R$ of $\pi:J^1B \to T^*B$.
The map $\pi|_R: R \to T^*B$ is an exact Lagrangian immersion.
We denote by
$$
\frak{Leg}^{\text{\rm nd}}_\star(J^1B)
$$
the set consisting of $R$'s intersecting the zero wall $Z = o_{T^*B} \times \R$ transversely.

The following definition is standard in the 1-jet bundle
and its generic structure is well-known.

\begin{defn} The \emph{wave front} of $R \subset J^1B$ is the image of
the front projection $ R \subset J^1B \to B \times \R$.
\end{defn}

Now let $S: E\to \R$ be a smooth function. 
We denote by $d^vS: E \to E^*$ the \emph{fiber derivative} of $S$, i.e., the map whose value
 at $e \in E$  is defined by
\be\label{eq:fiber-derivative}
d^vS(e)(\xi) = \frac{d}{ds}\Big|_{s=0} S(e + s \xi)
\ee
for $\xi \in E_{\pi(e)}$. This defines \emph{nonlinear} bundle map from $E$ to $E^*$.

With respect to an affine connection of $E$, we have the splitting
$$
TE = VTE \oplus HTE. 
$$
Under the canonical identification $VT_e E \cong E_{\pi(e)}$, the value
$d^vS(e)$  of the vertical derivative $d^vS$ at $e \in E$ can be canonically identified 
with the fiber derivative $dS(e)|_{VT_e E}$.
We define
$$
\Sigma_S := (d^{v}S)^{-1}(0)
$$
and call it the \emph{fiber-critical set} of $S$. Under the transversality condition
\be\label{eq:dvS-transverse}
d^vS \pitchfork o_{E^*} \Longleftrightarrow dS|_{VTE} \pitchfork o_{(VT)^*E},
\ee
$\Sigma_S$ is a smooth manifold of dimension equal to $\dim B$.
We recall the horizontal derivative map $\iota_S$ from \eqref{eq:iotaCS-intro}
$$
\iota_{S}(e) =  \left(\pi(e), D^hS(e) \circ (d\pi_e|_{HT_eE})^{-1}, S(e)\right) 
$$
which does not depend on the connection as a linear map
\be\label{eq:horizontal-derivative}
D^h S(e) = dS(e)|_{HT_eE} : HT_eE \to \R
\ee
at any zero point of $d^vS$, i.e, at $e \in \Sigma_S$. Composing this with
the natural isomorphism $HT_e E \cong T_{\pi(e)}B$, $D^h S(e)$ can be naturally
identified with an element in $T_{\pi(e)}^*B$. We denote this map by
$$
\iota_S : \Sigma_S \subset E \to J^1B
$$
\be\label{eq:iotaS}
\iota_S(e) = \left(\pi(e), dS(e) \circ (d\pi|_{HT_eE})^{-1}, S(e)\right).
\ee
We emphasize that $\iota_S$ is still well-defined as a map without assuming
transversality condition given in  \eqref{eq:dvS-transverse}. 
Under the transversality condition,
the map $\iota_S$ becomes an immersion which is also Legendrian.

\begin{rem} \label{rem:when-e-degenerate}
We remark that the map $\iota_S$ is not necessarily an embedding but
only an immersion in general for the symplectic case of cotangent bundles even when $S$ is nondegenerate.
However by the dimensional reason, the set of $S$ whose generating Legendrian submanifolds
are embedded can be shown to be open and residual in the set of GFQI. (See \cite{oh-park} for
the proof of this last statement.)
\end{rem}

\begin{defn} \label{defn:S-generates-R}
Let $S: E \to \R$ be a smooth function.
We say $S$ \emph{generates} a Legendrian submanifold
$R \subset J^1B$, if $R$ is the image of a Legendrian
immersion $\iota_S$ defined above, and call $S$ a \emph{generating function} of $R$. If, in addition, 
$S$ is quadratic at infinity, we call it a \emph{generating function quadratic at infinity ($\GFQI$)}.
\end{defn}

The following existence theorem for $\GFQI$ is proved by Laudenbach-Sikorav \cite{laud-sikorav,sikorav:cmh} 
 for the symplectic case (see also \cite{theret}), and
generalized to the contact case by Chaperon \cite{chaperon} and Chekanov \cite{chekanov:generating}.

\begin{prop}[Existence]\label{prop:existence} Let $R \in \frak{Leg}_0(J^1B)$. Then there exists a generating function
$S : E \to \R$ that generates $R$ in the sense that
$$
R = \iota_S( \Sigma_S )
$$
as explained above.
Furthermore, since $R$ is compact, we may choose $S$ so that it is quadratic at infinity.
\end{prop}

It is also proved by Viterbo and Théret in \cite{viterbo:generating}, \cite{theret} that the following
uniqueness result holds:

\begin{prop}[Uniqueness]\label{prop:uniqueness} Let $B$ and $R$ be
as above in Proposition \ref{prop:existence}. Suppose $S:E \to \R$ and $S':E' \to \R$
are GFQI of $R$. Then $S'$ can be obtained from $S$ by applying a sequence of
the following operations:
\begin{enumerate}
\item \text{\rm (Fiber-preserving diffeomorphism)} $S' = S \circ \Phi$ for some fiber-preserving
diffeomorphism $\Phi:E' \to E$,
\item \text{\rm (Stabilizations)} $S' = S + Q: E' = E \oplus F \to \R$, where $F \to B$ is a vector bundle and
$Q: F \to \R$ is a fiberwise nondegenerate quadratic form.
\end{enumerate}
\end{prop}

\begin{defn}\label{defn:Leg-equivalence} We say two $\GFQI$ $S,\, S'$ are \emph{equivalent} if
$S'$ is obtained from $S$ by applying the operations of (1) and (2) a finite number of times.
\end{defn}

\begin{rem}\label{rem:addition-by-constant}
We would like to recall that there is another operation, addition by a constant,
$S' = S +c$ for some $c \in \R$. In the symplectic-Lagrangian context, this operation
should be also contained in the definition of equivalence because it does not change
the generated Lagrangian submanifold in that context.
However it changes the generated Legendrian submanifold in the contact-Legendrian
context. It will be important to keep track of this operation of adding a constant
in the study of spectral invariant of Legendrian submanifolds as that of
\cite{oh-yso:spectral} and \cite{bhupal,sandon:homology}.
\end{rem}

\section{Broken trajectory approximation space $E$ of $\CL^{\text{\rm Carnot}}(J^1B,H;o_{J^1B})$}
\label{sec:approximation}

Assume that $B$ is a closed manifold and consider its one-jet bundle $M = J^1B$
equipped with the canonical contact form $\lambda = dz - pdq$.
For the simplicity of notation, we often simply write the base projection of $J^1B$ and of $T^*B$ by
$$
\pi_B(y)=: q(y) \quad \text{\rm and }\,  \pi_B(x) = q(x) 
$$
respectively.
We fix a Riemannian metric
$g$ on $B$ and its associated Levi-Civita connection. All norms on $TB$ and $T^*B$
will be computed in terms of this fixed metric $g$, and the exponential map on $B$
is in terms of the Levi-Civita connection.

For given integer $N \geq 1$, we consider partitions of the interval $[0,1]$  into $N$ subintervals
$$
0 = t_0 < t_1 < \cdots < t_N =1, \quad t_i = \frac{i}{N}.
$$
Let $\gamma \in \CL^{\rm Carnot}(J^1B,H;o_{J^1B})$. 
By definition, we have
\be\label{eq:Carnot-path2}
0 = \lambda(\dot \gamma) + H_t(\gamma(t)) = \big(\dot z - p(\gamma)(\dot q) + H_t\big)(\gamma(t)).
\ee
When $\gamma$ is a Hamiltonian trajectory, this is reduced to \eqref{eq:onshell-formula}
after integration over $t \in [0,1]$.
\begin{rem}
Consider a Hamiltonian trajectory $\gamma$ from $y_0$ to $y_1$. Then we can express $\gamma$ in two ways:
$$
\gamma(t) = \psi^t_H(y_0) = \phi^t_H(y_1).
$$
Unlike before, we use the symbol $\psi$ in this section because we frequently use Hamiltonian trajectories 
starting from some given points denoted by $y^+$.
\end{rem}

For each $i=1,\ldots,N$, we define a Hamiltonian isotopy over the interval $[t_{i-1},t_i]$ by
\be\label{eq:psiHit}
\psi_{H;i}^t:=\psi_H^t(\psi_H^{t_{i-1}})^{-1}
\ee
in accordance with the convention explained in the remark above. We mention $\psi_{H;i}^{t_{i-1}} = \id$
for all $i$. Therefore  the reparameterized path
\be\label{eq:reparameterized-psiHi}
t \in [0,1] \mapsto \psi_{H;i}^{t_{i-1} + \frac{t}{N}} 
\ee
can be made uniformly as close to $\id$ as possible on $[0,1]$ for all $i = 1, \cdots, N$
by letting $N \to \infty$. Starting from $y_0: = (q_0,0) = \gamma_1(0) \in o_{J^1B}$, we will
concatenate $\{\gamma_i\}$ into a broken trajectory with jumps at $t_i$'s.
For the simplicity of notation in our later calculations, we set 
\be\label{eq:psii}
\psi_i: =\psi_{H;i}^{t_i}.
\ee
Once such $N$ is chosen, we fix it for the rest of the construction, while allowing further enlargement of $N$ later if necessary.

As in \cite{laud-sikorav}, we now consider the vector bundle 
\be\label{eq:E}
E = (TB)^{\oplus (N-1)} \oplus (T^*B)^{\oplus (N-1)} \to B.
\ee
We denote a point $e \in E$ by
$$
e = (q_0, X, P),
$$
where $q_0 \in B$,
\be\label{eq:X}
X \in (T_{q_0}B)^{\oplus(N-1)}, \quad X = (X_1, \ldots, X_{N-1}),
\ee
and
\be\label{eq:P}
P \in (T^*_{q_0} B)^{\oplus(N-1)},  \quad P = (P_1, \ldots, P_{N-1}).
\ee
We put the initial condition of  each \emph{broken trajectoriy} $\gamma$ with 
$$
\gamma(0) = (q_0, 0, 0) =: y_0 \in o_{J^1B}\subset J^1B.
$$ 

 To associate a broken trajectoriy to each element $(q_0,X,P)$
 of $E$, we will use the exponential map associated with the Riemannian metric $g$. 
 For this reason, we need to ensure that each $X_k \in T_{q_0}B$ has its norm
 less than the injectivity radius at $q_0$. We define the $L_\infty$-norm of $X=(X_1,\ldots,X_{N-1})$ by
$$
\|X\|_\infty := \max\{\|X_k\|_g\mid 1\leq k \leq N-1\},
$$
where $\|X_k\|_g$ denotes the norm induced by the metric $g$. Let 
 $\iota_g$ be the injectivity radius of $(B,g)$. Since $B$ is compact, $\iota_g > 0$.
 We fix such a constant $\epsilon_0 > 0$ such that $\epsilon_0 < \iota_g$, and define
 a neighborhood 
\be\label{eq:epsilon0}
U := \{(q_0,X,P)\in E \mid \|X\|_\infty <\epsilon_0\}
\ee
of the zero section of $E$. We mention that \emph{there is no constraint on our choice co-vectors $P$.}
We will further specify the constant $\epsilon_0$ 
later so that Proposition \ref{prop:local-coord} below holds.

\emph{
For the clarity of our later exposition  we will adopt the different expression 
$$
e = (q_0, \widetilde{X}, \widetilde{P}) \in U
$$ 
for the element $e$ from  $U$, while the letters without `tilde' will be reserved for 
those from the whole $E$, not restricted to $U$.}

\medskip

\noindent{\bf Step 1:}
Recall that $[t_0,t_1]=[0,1/N]$. We define the reparameterization
\be\label{eq:gamma1}
\gamma_1(t):= (q_1(t),p_1(t),z_1(t)) : = \psi_H^{t_0 + \frac{t}{N}}(q_0,0,0) \in J^1B,\qquad t\in[0,1], \quad t_0 = 0
\ee
of the segment of the Hamilonian trajectory $t \mapsto \psi_H^t(q_0,0,0)$  restricted to 
the first subinterval $[t_0,t_1]$.
We denote the initial condition by $y_0:=(q_0,0,0)\in o_{J^1B}$, and express the endpoint of the trajectory 
(recalling $t_0 = 0$) as
\beastar
y_0^+ : = \psi_H^{t_0}(y_0) = \gamma_1(0) = (q_0,0,0)  & =:  &(q_0^+,p_0^+, z_0^+) \\
y_1^- := \psi_H^{t_1}(y_0) = \gamma_1(1) & =: &  (q_1^-, p_1^-,z_1^-)
\eeastar
componentwise.

Let $\widetilde{x}_1$ and $\widetilde{p}_1$  be the parallel transports of 
$\widetilde{X}_1 \in T_{q_0}B$ and $\widetilde{P}_1 \in T_{q_0}^*B$, 
respectively, along the base path $q_1(t)$ with respect to the Levi-Civita connection.
We will extensively use the parallel transports along the short geodesics in the rest of the paper.
For this purpose, we assume that $d(q_0,q_1) < \iota_g$ where $\iota_g$ is 
 the injectivity radius of the metric $g$.
We denote by
$\exp_q: T_qB \to B$ the exponential map issued at $q$, and denote by
$$
\operatorname{Exp}: TB \to B
$$
the \emph{collective exponential map} 
\be\label{eq:Exp}
\operatorname{Exp}(q,v): = \exp_q(v)
\ee
and its \emph{fiberwise} inverse
\be\label{eq:exp-inverse}
\text{\rm E}(q,q'): = \exp_q^{-1}(q')
\ee
for any pair $(q,q')$ with $d(q,q') < \iota_g$.

We will also utilize the following standard exponential estimates.
(See \cite{karcher}, for example.)
\begin{lem}\label{lem:exp-estimates}
Assume $B$ is compact or the metric $g$ is of
 bounded geometry, i.e., bounded curvature and injectivity radius $\iota_g > 0$. Denote by 
 $D_1\Exp(x,v), \, d_2\Exp(x,v)$ the first and second partial derivatives of $\Exp$ at $(x,v) \in TB$.
 Then there exists $C > 0$ depending only on $g$ and $\delta > 0$  below such that 
  \bea
 |D_1\Exp(x, v)(u)|  &\leq&  C|v|\, |u|,  \label{eq:D1exp}\\
 |d_2\Exp(x,v)(u)| & \leq & C|v|\, |u|, \label{eq:d2exp}
 \eea
and similarly for the map $\text{\rm E}$, provided $|v| \leq  \iota_g - \delta$ with $0 < \delta < \frac12 \iota_g$.
\end{lem}
\begin{rem}
 Here, we use the capital $D$ to emphasize that it refers to the differential with respect to the base point, 
 which depends on the connection, while  we use $d$ when the corresponding variables are fiber coordinates
 with base point fixed following the standard practice, e.g., in  \cite{karcher}.  In this regard, we also use the standard
 canonical identification $T_0(T_q B) \cong T_q B$ (see \cite[p.452-453]{spivakI} for example),
 when we denote an element $T_0(T_q B)$  by
 the same letter $u$ for the second inequality  \eqref{eq:D1exp} involving $D_1$ as for the first equality
 $u \in T_q B$. 
  \end{rem}

Now we define a path $c_1 =:c_{(\widetilde X_1,\widetilde P_1)}:[0,1]\to T^*B$  by
\be\label{eq:c1}
c_1(t) = \left(\exp_{q_1^-}(t\widetilde{x}_1),((d\exp_{q_1^-}(t\widetilde{x}_1)^*)^{-1}(\widetilde{p}_1)\right)
\ee
and consider its horizontal lift $\mu_1(t):[0,1]\to J^1B$ with respect to $\xi$, i.e., $\dot{\mu}_1(t) \in \xi_{\mu_1(t)}$ for each $t \in [0,1]$. 
This lift is given by
\be\label{eq:mu1}
\mu_1(t) = (c_1(t),z_1(t)), \quad
z_1(t) = z_1^- + \int_{c_1|_{[0,t]} }p\, dq.
\ee
In particular, we have the formulae for its end points,
$$
\mu_1(0)=(q_1^-,\widetilde{p}_1,z_1^-),\qquad \mu_1(1)=(q_1^+,p_1^+,z_1^+),
$$
where
\be\label{eq:1+}
\begin{cases} q_1^+ = \exp_{q_1^-}(\widetilde{x}_1), \\
p_1^+ = ((d\exp_{q_1^-}(\widetilde{x}_1))^*)^{-1}(\widetilde{p}_1),\\
z_1^+ = z_1^- + \int_{c_1} p\, dq.
\end{cases}
\ee
We `concatenate' them and define this concatenation, denoted by  $\gamma_1 \# \mu_1$, 
to be the \emph{piecewise continuous} curve  
\be\label{eq:concatenation}
\gamma_1 \# \mu_1(t) = 
\begin{cases} \gamma_1(2t) \quad & 0 \leq t < \frac12\\
\mu_1(2t-1) \quad & \frac12 \leq t \leq 1.
\end{cases}
\ee
This concatenation may have a jump-discontinuity at $t= \frac12$.
We denote this concatenated curve by
\be\label{eq:tildegamma1}
\widetilde \gamma_{(\widetilde X_1,\widetilde P_1)}:= \gamma_1 \# \mu_1
\ee
to emphasize its dependence on $(\widetilde X_1,\widetilde P_1)$.
We emphasize that 
$$
\gamma_{(\widetilde X_1,\widetilde P_1)}(0) = (\psi_H^{t_0})^{-1}(\gamma_1(0)) = (q_0,0,0) \in o_{J^1B}
$$
(recalling $t_0 = 0$).
We denote by $d_1$ the curve defined by 
\be\label{eq:d1}
d_1(t): = \pi_{T^*B}(\gamma_1(t)). 
\ee
Then 
\be\label{eq:tildeX1Y1in0}
\Pi_{q(d_1)}^{-1}(\widetilde x_1,\widetilde p_1) = (\widetilde X_1,\widetilde P_1)\in T_{q_0}B \oplus T_{q_0}^*B
\ee
by the definition of $(\widetilde x_1,\widetilde p_1)$.
This is the end of the first step.

In summary, we collect the 4 key points in Step 1 here for later reference:
\be\label{eq:3points-step1}
\begin{cases}
\gamma_1(0) = (q_0^+, p_0^+,z_0^+) , \quad &\gamma_1(1) = (q_1^-, p_1^-,z_1^-) \\
\mu_1(0) = (q_1^-, \widetilde p_1, z_1^-),\quad &\mu_1(1) = (q_1^+,p_1^+,z_1^+).
\end{cases}
\ee
\begin{rem}\label{rem:broken}
   Note that the \emph{initial point} of $\mu_1$ is given by
  $\mu_1(0)=(q_1^-,\widetilde{p}_1,z_1^-)$,  and
   the \emph{end point} of the Hamiltonian trajectory $\gamma_1$ is given by 
 $$
    \gamma_1(1)=(q_1^-,p_1^-,z_1^-):
$$    
  The two points share the same $q$- and $z$-coordinates but generally differ in $p$-coordinate. As a result, the two curves $\gamma_1$ and $\mu_1$ are not continuously concatenated in general as a single trajectory in $J^1B$. However, we will see later similarly as in Laudenbach-Sikorav's case 
\cite{laud-sikorav} that these two curves will be
continuously  joined together forming a single Hamiltonian trajectory for each vertical critical element
of the broken-trajectory approximation  we define below.
\end{rem}

Then the concatenation $d_1 \# c_1:[0,1] \to T^*B$ has the formula
$$
d_1\# c_1(t) = \pi_{T^*B} \left(\widetilde \gamma_{(\widetilde X_1, \widetilde P_1)}(t)\right)
$$
by the definitions \eqref{eq:c1}, \eqref{eq:d1} and \eqref{eq:tildegamma1}.

\medskip

\noindent{\bf Step 2:}
As in Step 1, we now consider the second segment $[t_1,t_2] = [1/N,2/N]$ and define the 
reparameterization 
\be\label{eq:gamma2}
\gamma_2(t) := (q_2(t), p_2(t), z_2(t)) : = \psi_{H;2}^{t_1 + \frac{t}{N}}(y_1^+) \in J^1B, \qquad t\in[0,1].
\ee
We denote its initial point by
$$
y_1^+ := (q_1^+, p_1^+, z_1^+) = \gamma_2(0).
$$
Recall that $\psi_{H;2}^t=\psi_H^t\circ(\psi_H^{t_1})^{-1}$. We write the endpoint of the trajectory as
$$
y_2^- := \gamma_2(1) = \psi_H^{t_2} (\psi_H^{t_1})^{-1} (y_1^+) = :  (q_2^-, p_2^-, z_2^-)
$$
componentwise.
We consider the parallel transport from $t=0$ to $t=1$
along the curve  $q((d_1 \# c_1) \# d_2)$.  Then we define
$\widetilde{x}_2$ and $\widetilde{p}_2$ by the equation
\be\label{eq:tildegamma20}
(\widetilde x_2, \widetilde p_2) =  \Pi_{q((d_1 \# c_1) \# d_2)}(\widetilde X_2, \widetilde P_2),
\ee
with the parallel transports of 
$\widetilde{X}_2 \in T_{q_0}B$ and $\widetilde{P}_2 \in T_{q_0}^*B$ to $q_2^-$ 
respectively, along the concatenated base path consisting of
$q_1(t)$,  the projection of $c_1(t)$ and $q_2(t)$ in $B$.
As before, we define the path $c_2$ in $T^*B$ using the exponential map at $q_2^-$, 
and its horizontal lift $\mu_2$ in $J^1B$ given by
\beastar
\mu_2(t) := (c_2(t),z_2(t)), \quad
z_2(t) = z_2^- + \int_{c_2|_{[0,t]} }p\, dq, \\
c_2(t) = \left(\exp_{q_2^-}(t\widetilde{x}_2),((d\exp_{q_2^-}(t\widetilde{x}_2))^*)^{-1}(\widetilde{p}_2)\right) =: c_{(\tildeX_2,\tildeP_2)}(t).
\eeastar
In particular, we have
$$
\mu_2(0)=(q_2^-,\widetilde{p}_2,z_2^-),\qquad \mu_2(1)=(q_2^+,p_2^+,z_2^+),
$$
where
$$
\begin{cases} q_2^+ = \exp_{q_2^-}(\widetilde{x}_2), \\
p_2^+ = ((d\exp_{q_2^-}(\widetilde{x}_2))^*)^{-1}(\widetilde{p}_2),\\
z_2^+ = z_2^- + \int_{c_2} p\, dq.
\end{cases}
$$
Then we define
\be\label{eq:tildegamma2}
\widetilde \gamma_2(t) = \widetilde \gamma_{(\widetilde X_2,\widetilde P_2)}(t):= \gamma_2 \# \mu_2(t).
\ee

In summary, we collect the 4 key points in $J^1B$ in Step 2 here:
\be\label{eq:3points-step2}
\begin{cases}
\gamma_2(0) = \mu_1(1) = (q_1^+,p_1^+,z_1^+), \quad &\gamma_2(1)   =   (q_2^-, p_2^-,z_2^-) \\
\mu_2(0) = (q_2^-, \widetilde p_2, z_2^-),\quad & \mu_2(1) = (q_2^+,p_2^+,z_2^+).
\end{cases}
\ee

\medskip

\noindent{\bf Step 3 (Induction):}

We now construct a sequence of contact Hamiltonian trajectories and horizontal curves 
over the interval $[t_{k-1},t_k]$, for each $k = 3, \ldots, N$, generalizing the construction of previous steps.

Suppose we have defined $\widetilde x_i$, $\widetilde p_i$ and the paths $c_i$, $z_i$ and $\mu_i = (c_i,z_i)$
for $i = 1, \cdots, k-1$. Then we will construct all of them for $i = k$.

Repeating the operation performed in Step 2 iteratively,
we denote by $d_i$ the curve defined by $d_i(t): = \pi_{T^*B}(\gamma_i(t))$ and consider the concatenation
$d_i \# c_i:[0,1] \to T^*B$. Then we consider the concatenation
\be\label{eq:kth-contcatenation}
(d_1\# c_1) \# (d_2 \# c_2) \# \cdots \#  (d_{k-1} \# c_{k-1}) \#d_k =: (d \# c)_k
\ee
and consider the parallel transport $\Pi_{q((d \# c)_k)}$ from $t=0$ to $t=1$ defined similarly as
in Step 2. \emph{We alert readers that our notation $(d \# c)_k$ should not be confused with $d_k \# c_k$.}

We define the Hamiltonian trajectory $\gamma_k:[0,1] \to J^1B$ by
\be\label{eq:gammak}
\gamma_k(t): =   \psi_{H;k}^{t_{k-1} + \frac{t}{N}} (y_{k-1}^+), 
\quad  t\in[0,1],
\ee
and denote its endpoint by
\be\label{eq:k-}
y_k^- := \gamma_k(1).
\ee
We also write
$\gamma_k(t) = (q_k(t), p_k(t), z_k(t))$ and 
$y_k^-= (q_k^-,p_k^-,z_k^-)$ respectively componentwise.

\begin{defn}[Coordinates $\widetilde x_k$ and $\widetilde p_k$]
We define $\widetilde{x}_k$ and $\widetilde{p}_k$ 
are the parallel transports of $\widetilde{X}_k$ and $\widetilde{P}_k$ 
\be\label{eq:tildegamma-k0}
(\widetilde x_k,\widetilde p_k) =\Pi_{q((d \# c)_k)}(\widetilde X_k,\widetilde P_k)
\ee
along the base path $q((d\#c)_k)$
of the concatenation $(d\#c)_k$.
\end{defn}

Then we define $c_k:[0,1]\to T^*B$ to be the path defined by
\be\label{eq:ck}
c_k(t) = \left(\exp_{q_k^-}(t\widetilde{x}_k),((d\exp_{q_k^-}(t\widetilde{x}_k))^*)^{-1}(\widetilde{p}_k)\right)
\ee
and compute its horizontal lift $\mu_k:[0,1]\to J^1B$ by
\be\label{eq:muk}
\mu_k(t) = (c_k(t),z_k(t)), \quad
z_k(t) = z_k^- + \int_{c_k|_{[0,t]} }p\, dq.
\ee
We set the endpoint $y_k^+:=\mu_k(1)=(q_k^+,p_k^+,z_k^+)$, where explicitly:
\be\label{eq:k+}
\begin{cases}
q_k^+ = \exp _{q_k^-} (\widetilde{x}_k), \\
p_k^+ = ((d \exp _{q_k^-} (\widetilde{x}_k))^*)^{-1}(\widetilde{p}_k), \\
z_k^+ = z_k^- + \int_{c_k} p\, dq.
\end{cases}
\ee
We define 
\be\label{eq:tildegammak}
\widetilde \gamma_k = \gamma_k \# \mu_k = :  \widetilde \gamma_{(\widetilde X_k,\widetilde P_k)}.
\ee

Putting together all the Hamiltonian trajectories and horizontal lifts constructed above, 
we obtain a collection of curves
$$
\{\gamma_1,\mu_1,\ldots,\gamma_{N-1},\mu_{N-1},\gamma_N\}
$$
that form a \emph{broken trajectory} in $J^1B$, depending on the data
 $$
 (\widetilde{X}_j,\widetilde{P}_j), \quad j =1,\ldots,N-1. 
$$ 
This construction is illustrated schematically in Figure \ref{fig:broken-traj}.

\begin{figure}
    \centering
    \includegraphics[width=1.0\linewidth]{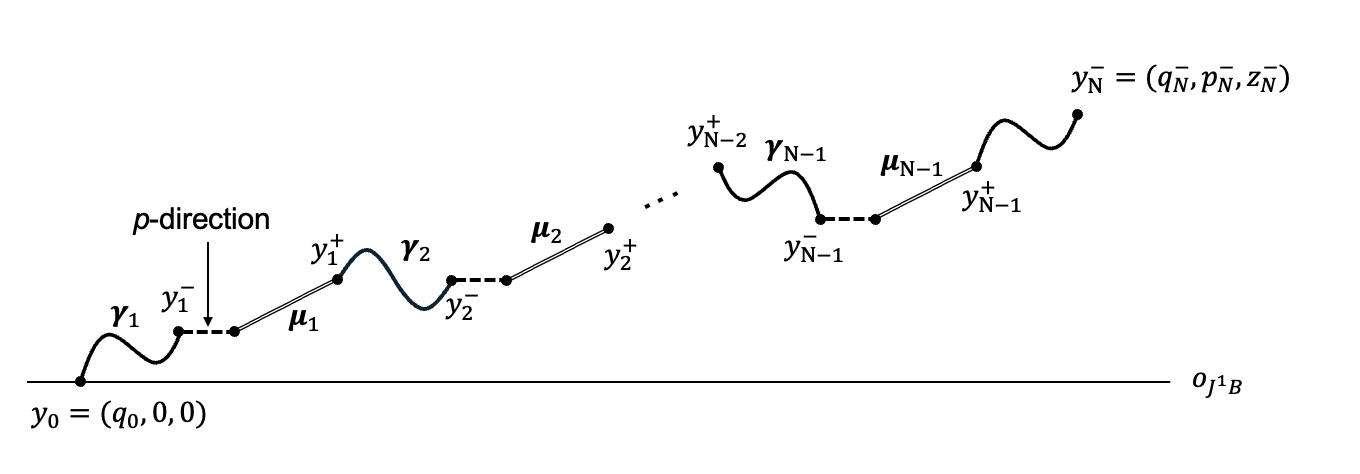}
    \caption{A broken trajectory in $J^1B$. Each arc $\gamma_k$ corresponds to a piece of contact Hamiltonian flow from $y_{k-1}^+$ to $y_k^-$, and each $\mu_k$ is the horizontal lift of a geodesic flow, ending at $y_k^+$. The dotted lines indicate jumps in the $p$-coordinate between $\gamma_k$ and $\mu_k$. The trajectory starts at $y_0 = (q_0,0,0)$ and ends at $y_N^-=(q_N^-,p_N^-,z_N^-)$.}
    \label{fig:broken-traj}
\end{figure}

This broken trajectory can be represented as a piecewise smooth curve
\bea\label{eq:broken-curve}
\tildegamma_{(q_0,\tildeX,\tildeP)} & :=&  (\tildegamma_1 \#\tildegamma_2 \# \cdots \# 
\tildegamma_{N-1}) \# \gamma_N \nonumber \\
& = & \left(\gamma_1 \# \mu_1\right) \# \left(\gamma_2 \# \mu_2\right) \# \cdots \# 
\left(\gamma_{N-1} \# \mu_{N-1}\right) \# \gamma_N
\eea
whose derivative can make a jump at each operation $\#$ inside the parenthesis 
depending on the datum $(\tildeX_j,\tildeP_j)$ from $E$ for $i = 1, \cdots N-1$.

In summary, we collect the 4 key points in $J^1B$ in Step $N-1$, the last iteration, here:
\be\label{eq:3points-step3}
\begin{cases}
\gamma_{N-1}(0) = \mu_{N-2}(1) = (q_{N-2}^+,p_{N-2}^+,z_{N-2}^+), \quad &
\gamma_{N-1}(1)   =   (q_{N-1}^-, p_{N-1}^-,z_{N-1}^-) \\
\mu_{N-1}(0) = (q_{N-1}^-, \widetilde p_{N-1}, z_{N-1}^-),\quad & \mu_{N-1}(1) 
= (q_{N-1}^+,p_{N-1}^+,z_{N-1}^+).
\end{cases}
\ee

\medskip

\noindent{\bf Step 4: Coordinate change:} 
We now mention
 that by construction all $\mu_k$, $\gamma_k$, $q_k^{\pm}$, $p_k^{\pm}$, etc., are
functions of $(q_0,\tildeX,\tildeP)$ for $k = 0, \ldots, N-1$. 
The above construction iterates up to Step $N\!-\!1$ and then ends with  $\gamma_N$,
the $N$-th $\gamma$-segment: \emph{there exists no $N$-th $\mu$-segment $\mu_N$}. 

We will view all $\gamma$-segments as functions of $(q_0,\tildeX, \tildeP)$. For example, we have
 $$
 q_{k+1}^- = q(\gamma_{k+1}(1)) = q(\psi_{k+1}(y_k^+))
 $$
where the map 
 $$
 \psi_{k+1} = \psi_{H;k+1}^{t_{k+1}} : = \psi_H^{t_{k+1}}\circ(\psi_H^{t_k})^{-1}
 $$
is  given in \eqref{eq:psii} and \eqref{eq:psiHit}. The assignment $y_k^+ \mapsto q_{k+1}^-$
 is a smooth function of  $y_k^+ = (q_k^+,p_k^+,z_k^+)$ which is determined by
 the rescaled Hamiltonian trajectory of $H$ along the segment $\gamma_{k+1}$. 
We will next show that the map
\be\label{eq:qXP-to-q1pqN}
(q_0,\widetilde{X},\widetilde{P}) \mapsto \left(q_1^-, \widetilde p_1, \cdots, q_{N-1}^-, \widetilde p_{N-1}, q_N^-\right).
\ee
defines a coordinate change by proving invertibility of its Jacobian.
 
\section{Jacobian of coordinate change}
 
We need to  study the derivatives of the map \eqref{eq:qXP-to-q1pqN}
with respect to  $(q_0,\tildeX,\tildeP)$ or equivalently  with respect to 
its parallel transport
$$
(q_0,\widetilde x,\widetilde p) =  \left(q_0,(\widetilde x_1,\widetilde p_1), (\widetilde x_2,\widetilde p_2),
\cdots,  (\widetilde x_{N-1},\widetilde p_{N-1})\right).
$$
To avoid confusion and for the clarity of exposition, we  henceforth denote
the derivatives thereof by 
$$
\delta \mu_k, \,\, 
\delta \gamma_k, \, \, \delta q_k^{\pm}, \, \, 
\delta p_k^{\pm}
$$
etc., respectively. Then  we can write its variation as
\be\label{eq:deltaqk-}
\delta q^-_{k+1} = D_1(q\circ \psi_{k+1})\delta q^+_k + d_2(q\circ \psi_{k+1})\delta p^+_k
+ d_3(q\circ \psi_{k+1})\delta z^+_k 
\ee
Here, we use the notation $D_1,d_2,d_3$ to denote partial derivatives with respect to the first, 
the second, and the third variables of a given map, respectively. 
(See  the remarks right after Lemma \ref{lem:exp-estimates} for relevant remarks.)

 For the simplicity of exposition, we consider the 
 linear map from $T_{q_k^-}B$ to $T_{q_{k+1}^-}B$ defined by
 \be\label{eq:Dk}
D^k := D_1(q\circ \psi_{k+1})d\exp_{q^-_k}(\widetilde{x}_k) 
+ d_3(q\circ \psi_{k+1})\langle \widetilde{p}_k, \cdot \rangle.
\ee
The following is a key element of the proof of Proposition \ref{prop:local-coord}.

\begin{lem}\label{lem:Dk} The linear map $D^k: T_{q_k^-}B \to T_{q_{k+1}^-}B$
is invertible if we choose $N$ sufficiently large and $\epsilon_0$ sufficiently small.
\end{lem}
\begin{proof} 
It follows from the compact support assumption on $H$ that
for a sufficiently large $N$, the $C^1$ distance between the time-one map $\psi_{k+1}$ and the identity map is of order $1/N$ for each $k$: This can be easily seen by considering $\psi_{k+1}$ as the time-one map of the reparameterization
\be\label{eq:rescaled-isotopy}
t \in [0,1] \mapsto \psi_{H;k+1}^{t_k + \frac{t}{N}}
\ee
of the isotopy 
$$
t \in [t_k, t_{k+1}] \mapsto \psi_{H;k+1}^t=\psi_H^t \circ (\psi_H^{t_k})^{-1}.
$$
Note that  the rescaled isotopy \eqref{eq:rescaled-isotopy}
 is generated by the Hamiltonian 
 \be\label{eq:Hk+1}
 \Dev_\lambda \left(t \mapsto \psi_{H;k+1}^{t_k + \frac{t}{N}}\right) = \frac1N H_{(k+t)/N}.
 \ee
 (See Definition \ref{defn:developing-map} for the definition of $\Dev_\lambda$.)
Then it follows from \eqref{eq:Hamilton-eq-Darboux} that 
\be\label{eq:der-qpsi}
\|D_1(q\circ \psi_{k+1}) -\Pi\| \leq \frac{C_1}{N}, \, \quad \|d_3(q\circ \psi_{k+1})\| \leq \frac{C_2}{N}
\ee
for some universal constant $C_1, \, C_2 > 0$:
Here $\Pi$ is the parallel transport along the unique short geodesic from $q_k^-$ to $q_{k+1}^-$. 
Uniqueness of the short geodesic is guaranteed if the distance between them is smaller than the injectivity radius $\iota_g$, 
which holds when $N$ is large enough.

On the other hand, it also follows from the compact support hypothesis of $H$ that 
there exists a sufficiently large $R_0> 0$ such that
$$
\supp H \subset D_{R_0}(J^1B)
$$
where we put
\be\label{eq:DRJ1B}
D_{R}(J^1B): = D_{R}(T^*B) \times [0,R]
\ee
where $D_{R}(T^*B)$ is the disc-bundle of radius $R$. 

When $\| \tildeP_k \| > R_0$, we have
\be\label{eq:d3R0}
d_3(q\circ \psi_{k+1}) = 0.
\ee
By using the fact that $d\exp_{q_k^-}(0) = \id$ and by the standard exponential estimate 
arising from Lemma \ref{lem:exp-estimates}, we obtain
$$
\|d\exp_{q^-_k}(\widetilde x_k) - \Pi \| \leq C\|\widetilde x_k\| \leq C \epsilon_0,
$$
for some $C>0$ where $\Pi$ is a similar parallel transport from $q_k^-$
to $\exp_{q_k^-}(\widetilde x_k)$. In particular,
\be\label{eq:dexpqk}
\|d\exp_{q^-_k}(\widetilde x_k) \| \geq (1 -  C \epsilon_0)
\ee
for any $\widetilde x_k$ satisfying $\|\widetilde x_k\| < \epsilon_0$.
\begin{sublem}\label{sublem:Dk-Pi} We have 
$$
\|D^k - \Pi\| \leq \frac12
$$
for all $k = 1, 2, \cdots$, provided $0 < \epsilon_0 < \frac1{4C}$ and $N > 4(2C_1 + C_2 R_0)$.
\end{sublem}
 \begin{proof} It follows from \eqref{eq:Dk} that 
$$
\|D^k - \Pi\| \leq \|D_1(q\circ \psi_{k+1}) \cdot d\exp_{q^-_k}(\widetilde x_k) - \Pi\| 
+ \|d_3(q\circ \psi_{k+1})\langle \widetilde p_k, \cdot \rangle \|.
$$
 By \eqref{eq:d3R0}, we obtain
 $$
\| d_3(q\circ \psi_{k+1})\langle \widetilde p_k, \cdot \rangle \| \leq \frac{C_2}{N} \cdot R_0.
$$
 By the inequalities \eqref{eq:der-qpsi} and \eqref{eq:dexpqk}, we can write
 $$
 D_1(q\circ \psi_{k+1})  = \Pi + B_1, \quad   d\exp_{q^-_k}(\widetilde x_k) = \id + B_2
 $$
 with $\|B_1\| \leq \frac{C_1}{N}$ and $\|B_2\| \leq C \epsilon_0$. Then we expand
$$
D_1(q\circ \psi_{k+1}) \cdot d\exp_{q^-_k}(\widetilde x_k) - \Pi = (\Pi + B_1)\cdot (\id + B_2) - \Pi
= B_1 + \Pi \cdot B_2 + B_1 \cdot B_2.
$$
Therefore we have
\beastar
\|D_1(q\circ \psi_{k+1}) \cdot d\exp_{q^-_k}(\widetilde x_k) - \Pi\|
& \leq & (\|(B_1 \| + \|\Pi \cdot B_2\| + \|B_1 \cdot B_2\|)\\
& \leq &  C \epsilon_0 + \left(\frac{C_1}{N} \cdot C \epsilon_0\right) \\
& \leq &   2C \epsilon_0 + \frac{2C_1}{N}
\eeastar
where the last inequality follows whenever $\epsilon_0< 1, \, N > 1$.
Combining the above two inequalities, we have derived
$$
\|D^k - \Pi\| \leq 2C \epsilon_0 + \frac{2C_1}{N} + \frac{C_2}{N} \cdot R_0 = 2C \epsilon_0 + \frac{2C_1 + C_2 R_0}{N}
$$
Therefore if we first choose
$0 < \epsilon_0 < \frac1{4C}$ and $N > 4(2C_1 + C_2 R_0)$, then $\|D^k - \Pi\| \leq \frac12$
which finishes the proof of the sublemma.
\end{proof}

As a corollary, we have shown that $D^k$ is invertible, if we choose
\be\label{eq:epsilon0N}
 0 < \epsilon_0 < \frac1{4C}, \quad N > 4(2C_1 + C_2 R_0).
\ee
This finishes the proof of Lemma \ref{lem:Dk}.
\end{proof}

\section{A fiberwise re-scaling transformation on $E \to B$}

In the next section, we will define a finite dimensional approximation $S : E \to \R$ of 
the action functional $\CA_H^{\text{\rm CC}}$.
Since the broken trajectories are well-defined only in the open neighborhood $U \subset E$,
we need to find a way of defining them globally on $E$, \emph{without affecting the vertical critical point set}.

We fix $0 < \epsilon_0 < \iota_g$ that was chosen in Section \ref{sec:approximation}.
Then for each $0< \delta \ll \epsilon_0$, we choose a smooth function 
$\rho_{\delta,\epsilon_0}: \R_{\geq 0} \to (0,\infty)$
of the form
\be\label{eq:defn-rhodelta}
\begin{cases}
\rho_{\delta,\epsilon_0} (r) = 1  \quad & {\rm for } \quad 0\leq r \leq \delta, \\
0 < r \rho_{\delta,\epsilon_0} (r) < \epsilon_0\quad & {\rm for } \quad 0 \leq r < \infty,\\
r \rho_{\delta,\epsilon_0}'(r) + \rho_{\delta,\epsilon_0}(r) > 0  \quad & {\rm for} \quad 0 \leq r < \infty.
\end{cases}
\ee
\begin{prop}\label{prop:rhodelta}
Such a function can be always chosen.
\end{prop}
\begin{proof}
Instead of constructing $\rho_{\delta,\epsilon_0}$ directly, we first construct
a smooth strictly increasing function
\[
f_{\delta,\epsilon_0}(r) := r\rho_{\delta,\epsilon_0}(r).
\]
The conditions in \eqref{eq:defn-rhodelta} are equivalent to requiring
\[
f_{\delta,\epsilon_0}(r) = r \quad \text{for } 0\le r\le \delta,
\qquad
0 < f_{\delta,\epsilon_0}(r) < \epsilon_0 \quad \text{for } r\ge0,
\qquad
f'_{\delta,\epsilon_0}(r) >0.
\]
To construct such a function, consider
\be\label{eq:defn-f}
f(r): = \begin{cases} 
r \quad & r \in [0,\delta],\\
A_{\epsilon_0} \arctan(r) \quad & r \in [\epsilon_0,\infty)
\end{cases}
\ee
with a choice of $A_{\epsilon_0} = \frac{2 \epsilon_0}{\pi}$.
Then for all $r\ge0$ we have
\[
0 \le f(r) < \epsilon_0.
\]
We compute
\[
f'(r) = \frac{A_{\epsilon_0}}{1+r^2} >0
\]
on $[\epsilon_0,\infty)$.
We also note
\[
f(\epsilon_0) = \frac{2\epsilon_0}{\pi}\arctan(\epsilon_0).
\]
Since $f(\delta)=\delta$, we can make $\delta < f(\epsilon_0)$ by taking $\delta$
sufficiently small.
Therefore any smooth monotone interpolation of $r$ and $A_{\epsilon_0}\arctan(r)$
on the intermediate region $[\delta,\epsilon_0]$ produces a smooth strictly
increasing function $f_{\delta,\epsilon_0}$ satisfying the above conditions.
Finally we define
\[
\rho_{\delta,\epsilon_0}(r) =
\begin{cases}
1 & r=0,\\
\dfrac{f_{\delta,\epsilon_0}(r)}{r} & r>0.
\end{cases}
\]
Then $\rho_{\delta,\epsilon_0}$ is smooth on $[0,\infty)$ and satisfies
\eqref{eq:defn-rhodelta}. This finishes the proof.
\end{proof}

We consider a family of  maps 
$$
\psi_{\delta,\epsilon_0}: TB \oplus T^*B \to D_{\epsilon_0}(TB) \times_B T^*B
$$ 
\be\label{eq:psidelta}
\psi_{\delta,\epsilon_0}(q,v,\beta) = \left(q, \rho_{\delta,\epsilon_0}(|v|) \, v, 
\left(\rho_{\delta,\epsilon_0}(|v|)\right)^{-1} \beta\right)
\ee
which is a \emph{fiberwise re-scaling} diffeomorphism and preserves the natural pairing
\be\label{eq:<,>}
\langle \cdot, \cdot \rangle: TB \oplus T^*B \to \R.
\ee
We will first prove that the Jacobian of $\psi_{\delta,\epsilon_0}$ is invertible and hence it is a
local diffeomorphism. Here $\times_B$ is the fiber product of 
$\pi_B: TB \to B$ and $\pi_B^*: T^*B \to B$.

We denote by  $(X,P)$ and $(\tildeX,\tildeP)$ an element of the domain
$TB\oplus T^*B$ and $D_{\epsilon_0}(TB) \times_B T^*B$ respectively.
Recall that we cannot control the size of $\tildeP$ where these variables
are defined. Note that as $\delta \to 0$, $\|\tildeP\|$ can be arbitrarily large,
while we will make the vectors $\widetilde{X}$ satisfy
\be\label{eq:norm-tildeX}
\|\widetilde{X}\| =  \rho_{\delta,\epsilon_0}(\|X\|) \|X\| < \epsilon_0,
\ee
\emph{We will fix a sufficiently small $\delta > 0$ and 
the associated function $\rho_{\delta,\epsilon_0}$ above so that
the inequality \eqref{eq:norm-tildeX} holds once and for all.} 

For the later purpose,
we state the following obvious derivative formula of the map $\psi_{\delta,\epsilon_0}$.

\begin{lem}\label{lem:XPtotildeXP} 
Let $q \in B$ and 
 take a local frame $\{e_i\}_{i=1}^n$ of $TB$ and its dual frame $\{f_j\}_{j=1}^n$ of $T^*B$
near a given point $q \in B$. We regard $X$, $P$ (resp. 
$\tildeX$ and $\tildeP$)
 as vectors of $\R^n$ and its dual space. Put $r: = \|X\|$. Then
we have
\be\label{eq:Jacobian-formula}
\begin{cases}
\frac{\del X \circ \psi_{\delta,\epsilon_0}}{\del X} = \frac{ \rho_{\delta,\epsilon_0}'(r)}{r}\, X_jX_k \,
|e_j\rangle \langle e_k|
+ \rho_{\delta,\epsilon_0}(r)\,  \delta_{jk}\,
|e_j\rangle \langle e_k|, \quad  \frac{\del X\circ \psi_{\delta,\epsilon_0}}{\del P} = 0, \\
\\
\frac{\del P \circ \psi_{\delta,\epsilon_0}}{\del X} = - \frac{\rho_{\delta,\epsilon_0}'(r)}{ \rho_{\delta,\epsilon_0}^2(r)r}\,
X_jX_k\, |e_j\rangle\langle f_k|, \quad \frac{\del P \circ \psi_{\delta,\epsilon_0}}{\del P}
 = \frac{1}{\rho_{\delta,\epsilon_0}(r)}\,  \delta_{jk}\, |f_j\rangle \langle f_k|
\end{cases}
\ee
where we use Dirac's bra-ket notation and Einstein's summation convention.
\end{lem}

Writing $\psi_{\delta,\epsilon_0}(X,P) =: (\tildeX,\tildeP)$, we note that the Jacobian matrix
\be\label{eq:Dpsideltae0}
(D\psi_{\delta,\epsilon_0}) = \left (\begin{matrix}  \frac{\del \tildeX}{\del X} & \frac{\del \tildeX}{\del P} \\
\frac{\del \tildeP}{\del X} & \frac{\del \tildeP}{\del P} \end{matrix}
\right) = \left (\begin{matrix}  \frac{\del \tildeX}{\del X} &0 \\
\frac{\del \tildeP}{\del X} & \frac{\del \tildeP}{\del P} \end{matrix}
\right)
\ee
is a triangular matrix.

With this preparation, we are ready to prove
the invertibility of the map $\psi_{\delta,\epsilon_0}$.
\begin{prop}\label{prop:psidelta} 
The fiberwise map 
$$
\psi_{\delta,\epsilon_0}: TB \oplus T^*B \to D_{\epsilon_0}(TB) \times_B T^* B
$$
is a diffeomorphism onto its image
provided $\epsilon_0$ is sufficiently small.
\end{prop}
\begin{proof} 
In \eqref{eq:Jacobian-formula}, 
we have computed the Jacobian of $\psi_{\delta,\epsilon_0}$. It follows that
the matrix 
$$
\frac{\del P \circ \psi_{\delta,\epsilon_0}}{\del P} = \frac{1}{\rho_{\delta,\epsilon_0}(r)}\,  \delta_{jk}|f_j\rangle \langle f_k|
= \frac{1}{\rho_{\delta,\epsilon_0}(r)}\, \id
$$
is invertible.  We also obtain
\beastar
\frac{\del X \circ \psi_{\delta,\epsilon_0}}{\del X} & = & \frac{ \rho_{\delta,\epsilon_0}'(r)}{r}\, X_jX_k \,
|e_j\rangle \langle e_k|
+ \rho_{\delta,\epsilon_0}(r)\,  \delta_{jk}\, |e_j\rangle \langle e_k|\\
& = & \frac{ \rho_{\delta,\epsilon_0}'(r)}{r}\, |X\rangle \langle X|
+ \rho_{\delta,\epsilon_0}(r)\,  \id
\eeastar
therefrom. We will prove this matrix is also invertible. For this purpose, we will show that its kernel is zero.
To prove this, let $v$ satisfy $\frac{\del X \circ \psi_{\delta,\epsilon_0}}{\del X}(v) = 0$, i.e.,
\be\label{eq:0=}
0 = \frac{ \rho_{\delta,\epsilon_0}'(r)}{r}\, |X\rangle \langle X|v \rangle
+ \rho_{\delta,\epsilon_0}(r) |v \rangle. \nonumber \\
\ee
Therefore we obtain
$$
v = - \frac{ \rho_{\delta,\epsilon_0}'(r)}{r\rho_{\delta,\epsilon_0}(r)}\,\langle X|v \rangle X.
$$
In particular $v$ is parallel to $X$.
Therefore by taking the inner product of the last equation with  $X$, we obtain
$$
\langle X|v \rangle = - \frac{ \rho_{\delta,\epsilon_0}'(r)}{r\rho_{\delta,\epsilon_0}(r)}\,\langle X|v \rangle \|X\|^2.
$$
If $v \neq 0$, then we cancel $\langle X|v \rangle$ away and obtain
$$
0 = \frac{ \rho_{\delta,\epsilon_0}'(r)}{r}\, r^2
+  \rho_{\delta,\epsilon_0}(r) =
r\rho_{\delta,\epsilon_0}'(r)+\rho_{\delta,\epsilon_0}(r) .
$$

This contradicts to the inequality $r \rho_{\delta,\epsilon_0}'(r) + \rho_{\delta,\epsilon_0}(r) > 0$
from \eqref{eq:defn-rhodelta}.
This proves that the matrix $\frac{\del X \circ \psi_{\delta,\epsilon_0}}{\del X}$ is invertible.
Combining the above with the triangular form of the Jacobian \eqref{eq:Dpsideltae0}, 
we have shown that the Jacobian of $\psi_{\delta,\epsilon_0}$ 
is invertible and so it is a local diffeomorphism.

It remains to prove that the map $\psi_{\delta,\epsilon_0}$ is one to one.
Suppose $\psi_{\delta,\epsilon_0}(q, X,P) = \psi_{\delta,\epsilon_0}(q',X',P')$. Since $\pi_B \circ \psi_{\delta,\epsilon_0} = \id$, we
have $q' = q$.
Then we have
\be\label{eq:standing-hypo}
\rho_{\delta,\epsilon_0}(\|X\|)X = \rho_{\delta,\epsilon_0}(\|X'\|)X', \quad \rho_{\delta,\epsilon_0}(\|X\|)^{-1} P = \rho_{\delta,\epsilon_0}(\|X'\|)^{-1} P'
\ee
so that $X$ and $X'$ are parallel, more specifically, we have
$$
X' = \frac{\rho_{\delta,\epsilon_0}(\|X\|)}{\rho_{\delta,\epsilon_0}(\|X'\|)} X =:  c X
$$
for some constant $c$.
We have now only to prove $c = 1$. By taking the norm of the above, we derive
$$
\frac{r'}{r} =\frac{\rho_{\delta,\epsilon_0}(r)}{\rho_{\delta,\epsilon_0}(r')}, \quad r = \|X\|, \, r'= \|X'\|.
$$
Since $r\rho_{\delta,\epsilon_0}(r)$ 
 is monotonically increasing, this equality implies $r = r'$, i.e., $c = 1$
which in turn implies that $X = X'$ and  $\rho_{\delta,\epsilon_0}(\|X\|) = \rho_{\delta,\epsilon_0}(\|X'\|)$. Finally 
this together with the second equation of the
standing hypothesis \eqref{eq:standing-hypo} also implies $P = P'$. 

Therefore combining them all, $\psi_{\delta,\epsilon_0}$ is one to one and so concludes the proof
of Proposition \ref{prop:psidelta}.
\end{proof}

Now we lift $\psi_{\delta,\epsilon_0}$ to  the map $\Psi_{\delta,\epsilon_0}$ defined on the product 
$$
E= E_N = (TB \oplus T^*B)^{\oplus(N-1)} 
$$
of the form 
\bea\label{eq:Psidelta}
&{}& \Psi_\delta(q, (X_1, P_1),(X_2,P_2),\cdots,( X_{N-1},P_{N-1})) \nonumber \\
& = & \left(q, \psi_{\delta,\epsilon_0}(X_1,P_1), \cdots, \psi_{\delta,\epsilon_0}(X_{N-1},P_{N-1})\right)
\eea
for each $0 < \delta < \epsilon_0$ such that $\Psi_\delta(e) = e$ provided 
$\|X\| = \max_{k} \|X_k\| < \delta$. Since $\epsilon_0$ will be fixed from now on, we
omit it and just write
$$
\Psi_{\delta,\epsilon_0}=: \Psi_\delta
$$
from the notation of $\Psi_{\delta,\epsilon_0}$. We also write
$$
(q, (X_1, P_1),(X_2,P_2),\cdots,( X_{N-1},P_{N-1})) = :(q,X,P)
$$
by a slight abuse of notation.

Each $\Psi_\delta$ is a diffeomorphism onto its image defined by
$$
\Psi_\delta(X,P) = : (\tildeX,\tildeP)
$$
where we put
\be\label{eq:tilderohXP}
\widetilde{X}_k := \rho_{\delta,\epsilon_0} (\|X_k\|)\cdot X_k, \quad
\widetilde{P}_k := (\rho_{\delta,\epsilon_0} (\|X_k\|))^{-1} P_k
\ee
for each $k$.
Recall that we cannot control the size of $\tildeP$ where these variables
are defined. Note that as $\delta \to 0$, $\|\tildeP_k\|$ can be arbitrarily large,
while we will make the vectors $\widetilde{X}_k$ satisfy
\be\label{eq:norm-tildeXk}
\|\widetilde{X}_k\| = \rho_{\delta,\epsilon_0}(\|X_k\|)\cdot\|X_k\| < \delta,
\ee
so that $(q_0,\tildeX,\tildeP)\in U$ for all $(q_0,X,P) \in E$,
whenever $(q_0,\tildeX,\tildeP)$ is a vertical critical point
of our proposed generating function which we will denote by $S$ later.

\section{Construction of a canonical GFQI}
\label{sec:definitionGFQI}

For each $0 < \delta < \epsilon_0$, we put
\bea\label{eq:Udelta}
U_\delta & = & U_\delta(E): = \{(q,X,P) \mid \|X\| < \delta\} 
=  (D_\delta(TB))^{N-1} \times_B (T^*B)^{N-1} \nonumber \\
&\cong&
\left(D_\delta(TB) \times_B T^*B\right)^{N-1}.
\eea
 We also define $U_{\epsilon_0}$ similarly
for $\epsilon_0 < \iota_g$.

Recalling that $\Image \Psi_\delta \subset U_{\epsilon_0}$ by
construction, we can now define the composition $S = S_\delta : = \widetilde S \circ \Psi_\delta$, i.e.,
$$
S(q, X,P): = \widetilde S(q, \tildeX,\tildeP), \quad (\tildeX,\tildeP) = \Psi_\delta(X,P) 
$$
which is now defined on the whole space $E$ if $\widetilde S$ is originally defined on $U_{\epsilon_0} \subset E$.

The pairing \eqref{eq:<,>} naturally induces a fiberwise quadratic function on 
$$
E = (TB \oplus T^*B)^{\oplus(N-1)}
$$
by taking the sum of the pairing $\langle \cdot, \cdot \rangle$ on $TB \oplus T^*B$,
which we still denote by $\langle \cdot, \cdot \rangle: E \to \R$.

The following lemma is obvious. 

\begin{lem} \label{lem:obvious}  We have
$$
\langle \widetilde{X}_k,\widetilde{P}_k \rangle = \langle X_k,P_k \rangle
$$
on $TB \oplus T^*B$ for all $k$, and hence $\langle \widetilde{X},\widetilde{P} \rangle = \langle X, P \rangle$ on $E$.
\end{lem}

We consider the restriction $ \gamma|_{t_{\ell-1}}^{t_\ell}$ of the path $\gamma : [0,1] \to M$ to the subinterval $[t_{\ell-1}, t_\ell]$.  
\begin{lem} Let $\gamma_\ell:[0,1] \to J^1B$ be the rescaled path
$$
\gamma_\ell(t) = \gamma\left(t_{\ell-1} + \frac{t}{N}\right).
$$
Then we have 
$$
\CA_{H} (\gamma|_{t_{\ell-1}}^{t_\ell}): = \CA_{H_\ell} (\gamma_\ell)
$$
for the Hamiltonian 
\be\label{eq:Hell}
H_\ell = \frac1N H\left(t_{\ell-1}+\frac{t}{N}, \cdot\right)
\ee
\end{lem}
\begin{proof} By definition, we have
$$
H_\ell = \Dev_\lambda\left(t \mapsto \psi_{H;\ell}^{t_{\ell-1} + \frac{t}{N}}\right).
$$
The equality \eqref{eq:Hell} follows from a direct calculation.
Once this is derived, the equality of the action integral immediately follows.
\end{proof}

We are now ready to introduce our main function $A_k: E \to \R$ as the composition
\be\label{eq:Ake}
A_k(e) = A_k^U\circ \Psi_\delta(e)
\ee
where we define 
\be\label{eq:AkUe}
A_k^U(e) =
A_k^U(q_0, \tildeX, \tildeP) : = - \CA _{H,k} (\gamma_k) + \int_{\gamma_k} dz
\ee
for $ k = 1, \cdots, N$. Here we recall the notation $(\tildeX, \tildeP) = \Psi_\delta(X,P)$
and that the right hand side defined for the trajectory associated thereto
remains well-defined, even when $(X, P)$ lies outside the domain $U$.

\begin{rem}
Since each $\gamma_k$ is a Hamiltonian trajectory, $\CA_{H,k}(\gamma_k)$ always vanishes. 
But we add this term to \eqref{eq:AkUe} in order to emphasize that it is consistent with 
the effective action functional $\CA^{\rm CC}_H$ given in Definition \ref{defn:CAHCC},
and its first variation explicitly involves the Hamiltonian $H$. See also Remark \ref{rem:adoption}.
\end{rem}

We can derive that \eqref{eq:Ake} becomes
$$
A_k^U(q_0,X,P) = z(\gamma_k(1))  - z(\gamma_k(0))
$$
by rewriting the right hand side thereof,
where $\gamma_k (t) = (q_k (t), p_k (t), z_k (t))$ is the Hamiltonian
trajectory from 
$y_{k-1}^+  =  (q_{k-1}^+,p_{k-1}^+,z_{k-1}^+)$
to
$y_k^- = (q_k^-,p_k^-,z_k^-)$.
Here we write $y_0^+  =  (q_0, 0, 0) \in o_{J^1B}$ \emph{instead of $y^+ = (q,0,0)$} to emphasize 
that it is the initial condition of the piecewise smooth curves that we will consider now.

Then by definition, we have
\be\label{eq:zk+-}
z_{k-1}^+ = z\left(\gamma_k(0)\right),\quad z_k^- = z\left(\gamma_k(1)\right).
\ee
Therefore we have obtained
\be\label{eq:Ake2}
A_k^U(q_0,X,P) = z_k^- -z_{k-1}^+
\ee
regarding each summand of the right hand side as a function of $(q_0,X,P) \in E$.
 
We now take the sum
$$
A^U = A_1^U + \cdots + A_N^U
$$
and define a function $S: E \to \R$ to be
$$
S(q_0,X,P) =\widetilde S(q_0, \tildeX,\tildeP) = \sum_{k=1}^{N-1} \langle \widetilde{P}_k, \widetilde{X}_k \rangle + A^U.
$$
Then we can simply write 
\be\label{eq:S}
S(q_0,X,P) = \langle X, P\rangle + A = \langle \tildeX, \tildeP \rangle + A
\ee
 on $E$ where $A = A^U \circ \Psi_\delta$.
 
We now consider $E$ as a fiber bundle
$$
\Pi_E: E = (TB)^{\oplus (N-1)}\oplus (T^*B)^{\oplus (N-1)} \to B
$$
over the \emph{twisted projection} of taking the new `final point' projection
$$
e=(q_0,X,P) \mapsto (q_0,\widetilde{X},\widetilde{P}) \mapsto q^-_N:
$$
Here the input $e \in E$ is first rescaled to $(q_0, \widetilde{X}, \widetilde{P})= \Psi_\delta(e) \in U$, and then mapped to the point $q_N^-$,
via the cotangent projection $\pi: T^*B \to B$, of the final point of the broken trajectory 
\eqref{eq:broken-curve}
constructed from this rescaled data $\Psi_\delta(e)$. (See Figure \ref{fig:broken-traj}.) This replaces the `initial point' projection $(q_0, X, P) \mapsto q_0$ considered earlier.

\section{Analysis of the vertical critical point equation}
\label{sec:proofGFQI}

In this subsection, \emph{we do some critical point analysis of $S$ on $E$} 
for a fixed $N$ so large that Proposition \ref{prop:local-coord} and \ref{prop:psidelta} hold. 
By definition of $\rho_{\delta,\epsilon_0}$ and $\Psi_\delta$, we have
$$
\Psi_\delta|_{U_\delta} = \id|_{U_\delta}.
$$
Then utilizing Proposition \ref{prop:psidelta} and the definitions of $\widetilde S$ and 
$S$ from \eqref{eq:S}, we will show that
\be\label{eq:Sigma-coincide}
\Sigma_S = \Sigma_{S|_{U_\delta}} = \Sigma_{\widetilde S|_{U_\delta}}
\ee
\emph{provided $N$ is sufficiently large}. 
After then we have only to do the (vertical) critical point analysis of 
$\widetilde S$ on $U_{\epsilon_0}$ and make 
$$
\Sigma_{\widetilde S} \subset U_\delta.
$$
The rest of the present section will be occupied by this analysis on $\widetilde S$ on $U_{\epsilon_0}$
until we restore $S$ back at the end of the present section. 

For these purposes, 
we rewrite $A_k(e)$ as follows, as mentioned in Remark \ref{rem:adoption},
the expression of which plays an important role in concluding our construction of GFQI.
\be\label{eq:Ake=}
A_k(e) = \int_{\gamma_k} dz = \int_0^1 \left(p \frac{\del H_k}{\del p}
(t,\gamma_k(t)) - H_k (t,\gamma_k(t))\right)\, dt. 
\ee
where we have
\be\label{eq:Hk}
H_k = \frac{1}{N} H\left(t_{k-1} + \frac{t}{N}, \cdot\right)
\ee
 which is \eqref{eq:Hell} with $\ell$ replaced by $k$, and 
\eqref{eq:Ake=} follows from \eqref{eq:onshell-formula}.

An immediate consequence of this expression of $H_k$ is the following.
\begin{cor}\label{cor:subset}
There exists a sufficiently large $N > 0$ such that
$$
\Sigma_{\widetilde S} \subset U_\delta(E).
$$
\end{cor}
\begin{proof} Recall that $\widetilde S$ is defined on $U_{\epsilon_0}$ and $0 < \delta < \epsilon_0$ is
already fixed. It follows from the formula \eqref{eq:Hk} of the rescaled Hamiltonian $H_k$ that
 $\|H_k\|_{C^2} \leq \frac{C}{N}$. 
 In particular, using \eqref{eq:Hamilton-eq-Darboux}, we obtain
\beastar
\|\dot \gamma_k(t)\| & = & \max\{\|(\dot q,\dot p)(\gamma_k(t))\|, \|\dot z(\gamma_k(t))\|\} \\
& \leq &  \max\left\{\|X_{H_k}(t, \gamma_k(t))\|, \left|\left(p(\gamma_k(t)) \frac{\del H}{\del p}(\gamma_k(t))\right) - H_t(\gamma(t))\right|\right\}
\leq \frac{C}{N}
\eeastar
for all $t \in [0,1]$ if $N$ is sufficiently large. Here we also use the estimate 
$$
\|p(\gamma_k(t))\| \leq \int_0^t \left\|\frac{D H}{\del q}\right\|(\gamma(u))\, du \leq \frac{C}{N}
$$
so that
$$
|\dot z\circ \gamma_k(t)| \leq  \left|p(\gamma_k(t)) \frac{\del H}{\del p}(\gamma_k(t)) - H_t(\gamma(t))\right| \\
\leq \frac{C}{N}.
$$
 This implies that 
after parallel transport of the vector field $X_{H_k}(t,\gamma_k(t))$  
along $q((d_1 \# c_1)\# (d_2 \# c_2) \# \cdots (d_{k-1}\#c_{k-1}) \# d^t_k)$ with $d^t_k(s) =d_k(ts)$
back from $t \in [t_{k-1},t_k]$ to $t_0 = 0$, the transported vector field 
$\X_k(t) = (\widetilde X_k(t), \widetilde P_k(t), \widetilde z_k(t))$ in 
$T_{(q_0,0)}(J^1B) = T_{q_0}B \oplus T^*_{q_0}B \oplus \R$  have their norms satisfy
$$
\|\widetilde X_k(t)\| \leq \frac{C}{N},  \quad |\widetilde z_k| \leq \frac{C}{N}
$$
for all $k = 1, \, \cdots, N-1$.  By solving the ODE $\dot x = \X_k(t,x)$ defined on the vector space 
$T_{q_0}B\oplus T^*_{q_0}B \oplus \R$,  this estimate implies 
that the translated curve $\tildegamma_{(\tildeX_k,\tildeP_k)}$ satisfies
\be\label{eq:image-tildegamma}
\Image \left(\tildegamma_{(\tildeX_k,\tildeP_k)}\right) \subset D_{C/N}(J^1B)
\ee
for all $k$ and  $(q_0, \tildeX, \tildeP) \in U_{(N/C)}(E)$ by the definitions, \eqref{eq:DRJ1B} of 
$D_R(J^1B)$, \eqref{eq:Udelta} of $U_\delta$ and 
\eqref{eq:tildegammak}  of $\tildegamma_k= \tildegamma_{(\tildeX_k,\tildeP_k)}$.
In particular it implies
$$
\|(\tilde X_k, \tildeP_k)\| = \|\pi_{T^*B}(\tildegamma_{(\tildeX_k,\tildeP_k)}(1))\| < \frac{C}{N}.
$$
Therefore by taking $N$ sufficiently large, we have finished the proof.
\end{proof}

Next, we recall the notations of $\widetilde x_k, \widetilde p_k$ introduced in Step 3 of the construction of the broken trajectory; they are obtained by parallel transporting $\tildeX_k$ and $\tildeP_k$ to the base point $q_k^-$. Since the
parallel transport preserves inner product, we have 
$$
\widetilde S = \sum_{k=1}^{N-1} \langle \widetilde{p}_k,\widetilde{x}_k \rangle + A^U
$$ 
and hence
\be\label{eq:deltaS}
\delta \widetilde S = \sum_{k=1}^{N-1} (\langle \delta \widetilde{p}_k, \widetilde{x}_k \rangle 
+ \langle \widetilde{p}_k, \delta \widetilde{x}_k \rangle) + \sum_{k=1}^N \delta A_k^U.
\ee
For the notational simplicity, we will drop the superscript `U' from $A^U$ and write
$$
A_k = A_k^U
$$
in the rest of our calculations below as long as there is no danger of confusion.

From \eqref{eq:Ake2} we know that
\be\label{eq:deltaAi}
\delta A_k = \delta z^-_k - \delta z^+_{k-1}.
\ee
The following lemma reveals some similarity of the variation $\delta A_k$ in the current
contact case and in Laudenbach-Sikorav's symplectic case \cite{laud-sikorav}.
But we would also like to highlight the new appearance of the extra term
$$
 (e^{g_{(k-1)k}} - 1)\lambda (\delta y^+_{k-1})
$$
unlike the case of latter.
\begin{rem}
The appearance of  this term is purely of contact geometric nature
and makes the study of the variation $\delta A_k$ in the rest of 
calculations much more nontrivial and interesting than the symplectic case
of \cite{laud-sikorav}. The study requires us to perform 
a very careful and precise tensorial calculations unlike that of \cite{laud-sikorav}.
Readers might want to compare the degree of complexity of the calculations performed here 
and those in \cite{laud-sikorav}.
\end{rem}

\begin{lem}\label{lem:deltaAi}
\be\label{eq:deltaAi2}
\delta A_k = \langle p_k^-, \delta q_k^- \rangle - \langle p_{k-1}^+ , \delta q_{k-1}^+ \rangle
+ (e^{g_{(k-1)k}} - 1)\lambda (\delta y^+_{k-1})
\ee
where $g_{(k-1)k} = g_{\psi_{H;k}}(y_{k-1}^+)$ for all $k = 1,\ldots, N$.
\end{lem}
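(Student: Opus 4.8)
The plan is to build on the identity already recorded in \eqref{eq:deltaAi}, namely $\delta A_k = \delta z_k^- - \delta z_{k-1}^+$, and to expand each of the two terms using the splitting $dz = \lambda + \pi^*\theta$ of the $z$-differential on $J^1B$, where $\pi^*\theta = p\,dq$. Since $z_k^-$ is by definition the $z$-coordinate of the point $y_k^- = (q_k^-, p_k^-, z_k^-) \in J^1B$ regarded as a function of $e = (q_0, X, P) \in E$, the chain rule gives $\delta z_k^- = dz_{y_k^-}(\delta y_k^-) = \lambda_{y_k^-}(\delta y_k^-) + \langle p_k^-, \delta q_k^- \rangle$, where $\delta q_k^-$ denotes the $q$-component (the $d\pi_B$-image) of the induced variation $\delta y_k^-$; similarly $\delta z_{k-1}^+ = \lambda_{y_{k-1}^+}(\delta y_{k-1}^+) + \langle p_{k-1}^+, \delta q_{k-1}^+ \rangle$. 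Substituting these into \eqref{eq:deltaAi} immediately produces the two pairing terms $\langle p_k^-, \delta q_k^- \rangle - \langle p_{k-1}^+, \delta q_{k-1}^+ \rangle$ appearing in \eqref{eq:deltaAi2}, so that it remains only to identify $\lambda(\delta y_k^-) - \lambda(\delta y_{k-1}^+)$ with the conformal correction $(e^{g_{(k-1)k}} - 1)\lambda(\delta y^+_{k-1})$.

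For this last step I would invoke the defining relation of the $k$-th Hamiltonian leg: by \eqref{eq:k-} and \eqref{eq:psiHi} one has $y_k^- = \psi_{H;k}(y_{k-1}^+)$ with $\psi_{H;k} = \psi_H^{k/N}(\psi_H^{(k-1)/N})^{-1}$. Differentiating in $e$ gives $\delta y_k^- = (d\psi_{H;k})_{y_{k-1}^+}(\delta y_{k-1}^+)$, whence, using $\psi_{H;k}^*\lambda = e^{g_{\psi_{H;k}}}\lambda$ and the abbreviation $g_{(k-1)k} = g_{\psi_{H;k}}(y_{k-1}^+)$,
\[
\lambda_{y_k^-}(\delta y_k^-) = (\psi_{H;k}^*\lambda)_{y_{k-1}^+}(\delta y_{k-1}^+) = e^{g_{(k-1)k}}\,\lambda_{y_{k-1}^+}(\delta y_{k-1}^+).
\]
Plugging this into the rearrangement above and collecting the $\lambda(\delta y_{k-1}^+)$ terms yields exactly \eqref{eq:deltaAi2}. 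Alternatively, the relation $\lambda(\delta y_k^-) = e^{g_{(k-1)k}}\lambda(\delta y_{k-1}^+)$ can be read off from the first variation formula \eqref{eq:perturbed-1st-variation} applied on the subinterval $[(k-1)/N, k/N]$: the integral term there vanishes because $\gamma_k$ is a genuine contact Hamiltonian trajectory, and the boundary contributions that survive say precisely that $\lambda(\eta(k/N)) = e^{g_{(k-1)k}}\lambda(\eta((k-1)/N))$ for any variation $\eta$ tangent to the family of Hamiltonian legs.

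There is no serious analytic obstacle here; the content is essentially bookkeeping. The one point that genuinely requires care, and the only place where the contact setting departs from the symplectic computation of \cite{laud-sikorav}, is that transporting the Reeb component $\lambda(\delta y_{k-1}^+)$ across the Hamiltonian leg $\gamma_k$ multiplies it by the conformal factor $e^{g_{(k-1)k}}$ rather than leaving it unchanged; this is exactly the origin of the extra summand $(e^{g_{(k-1)k}} - 1)\lambda(\delta y^+_{k-1})$ that is absent in the symplectic case. I would also make sure to state explicitly that $\delta q_k^-$ and $\delta q_{k-1}^+$ are the $d\pi_B$-images of $\delta y_k^-$ and $\delta y_{k-1}^+$, so that the pairings $\langle p_k^-, \delta q_k^-\rangle$, $\langle p_{k-1}^+, \delta q_{k-1}^+\rangle$ are the evaluations of the Liouville form $\theta = p\,dq$ on these variations, consistent with the notation used in the subsequent broken-trajectory calculations.
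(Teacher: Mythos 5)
Your proposal is correct and follows essentially the same route as the paper's own proof: both start from $\delta A_k = \delta z_k^- - \delta z_{k-1}^+$, use the identity $\lambda(\delta y_k^\pm) = \delta z_k^\pm - \langle p_k^\pm, \delta q_k^\pm\rangle$ coming from $\lambda = dz - p\,dq$, and then convert $\lambda(\delta y_k^-)$ into $e^{g_{(k-1)k}}\lambda(\delta y_{k-1}^+)$ via $\delta y_k^- = d\psi_{H;k}(\delta y_{k-1}^+)$ and the conformal relation $\psi_{H;k}^*\lambda = e^{g_{\psi_{H;k}}}\lambda$. In fact you make the conformal-factor step slightly more explicit than the paper does, which is a welcome clarification rather than a deviation.
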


\begin{proof}
Note that we have
\be\label{eq:lambda-deltayk}
\lambda (\delta y^\pm_k) = \delta z^\pm_k - \langle p^\pm_k, \delta q^\pm_k \rangle 
\ee
Since $\delta \gamma_k$ is a variation of Hamiltonian trajectories, we have
$$
\delta \gamma_k (t) = d\psi_{H;k}^{t_{k-1}+\frac{t}{N}}(\delta y^+_{k-1})
$$
for $0 \leq t \leq 1$. 
By \eqref{eq:lambda-deltayk}, we have
\beastar
\delta A_k &=& \langle p_k^-, \delta q_k^- \rangle - \langle p_{k-1}^+ , \delta q_{k-1}^+ \rangle
+ \lambda (\delta y^-_k) - \lambda (\delta y^+_{k-1}) \\
&=& \langle p_k^-, \delta q_k^- \rangle - \langle p_{k-1}^+ , \delta q_{k-1}^+ \rangle
+ (e^{g_{(k-1)k}} - 1)\lambda (\delta y^+_{k-1}).
\eeastar
since $\delta y^-_k = \delta \gamma_k (1)$.
\end{proof}

\begin{lem}
We have the recurrence relation
\be\label{eq:lambda-deltayk+rec}
\lambda (\delta y^+_k)
= e^{g_{(k-1)k}}\lambda (\delta y^+_{k-1}) + \langle \delta \widetilde{p}_k , \widetilde{x}_k \rangle
- \langle \left( D_1\operatorname{Exp}(q^-_k,\widetilde{x}_k) \right)^* (p^+_k) - p^-_k, \delta q^-_k \rangle
\ee
for $k = 1, \ldots, N-1$ with $\lambda (\delta y^+_0) = 0$.
\end{lem}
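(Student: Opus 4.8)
The plan is to compute $\lambda(\delta y^+_k)$ head-on from the defining recursions \eqref{eq:k-}--\eqref{eq:k+}, using the splitting $\lambda = dz - p\,dq$ on $J^1B$, so that $\lambda(\delta y^+_k) = \delta z^+_k - \langle p^+_k,\delta q^+_k\rangle$. The whole task then reduces to putting $\delta z^+_k$ and $\langle p^+_k,\delta q^+_k\rangle$ into closed form in terms of the data at level $k$ and of $\delta\widetilde x_k$, $\delta\widetilde p_k$, $\delta q^-_k$, and to tracking the cancellations; the conformal factor will be produced at the very end by pushing $\delta z^-_k$ back through the contactomorphism $\psi_{H;k}$.

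First I would record the closed form $z^+_k = z^-_k + \langle\widetilde p_k,\widetilde x_k\rangle$. Along the path $c_k(t) = (\exp_{q^-_k}(t\widetilde x_k),\,((d\exp_{q^-_k}(t\widetilde x_k))^*)^{-1}(\widetilde p_k))$ one has $\tfrac{d}{dt}\exp_{q^-_k}(t\widetilde x_k) = (d\exp_{q^-_k})_{t\widetilde x_k}(\widetilde x_k)$, so the integrand $p\,dq$ of $\int_{c_k}p\,dq$ equals $\langle((d\exp_{q^-_k})_{t\widetilde x_k}^*)^{-1}(\widetilde p_k),\,(d\exp_{q^-_k})_{t\widetilde x_k}(\widetilde x_k)\rangle = \langle\widetilde p_k,\widetilde x_k\rangle$, a Gauss--lemma-type cancellation of $d\exp$ against its inverse adjoint; hence $z^+_k = z^-_k + \langle\widetilde p_k,\widetilde x_k\rangle$, which — parallel transport being isometric — is also $\langle\widetilde P_k,\widetilde X_k\rangle = \langle P_k,X_k\rangle$ by Lemma \ref{lem:obvious}, matching the $\sum\langle P_k,X_k\rangle$ term in $S$. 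Differentiating, with all variations taken covariantly along a fixed one-parameter family in the sense of Step~4,
\[
\delta z^+_k = \delta z^-_k + \langle\delta\widetilde p_k,\widetilde x_k\rangle + \langle\widetilde p_k,\delta\widetilde x_k\rangle,
\]
while $q^+_k = \operatorname{Exp}(q^-_k,\widetilde x_k)$ together with the $\nabla$-splitting of $d\operatorname{Exp}$ into its base and fibre parts gives $\delta q^+_k = D_1\operatorname{Exp}(q^-_k,\widetilde x_k)(\delta q^-_k) + d\exp_{q^-_k}(\widetilde x_k)(\delta\widetilde x_k)$. Pairing with $p^+_k$ and invoking the defining identity $(d\exp_{q^-_k}(\widetilde x_k))^* p^+_k = \widetilde p_k$ from \eqref{eq:k+}, the fibre term contributes exactly $\langle\widetilde p_k,\delta\widetilde x_k\rangle$, which cancels the corresponding term above; substituting into $\lambda(\delta y^+_k) = \delta z^+_k - \langle p^+_k,\delta q^+_k\rangle$ leaves
\[
\lambda(\delta y^+_k) = \delta z^-_k + \langle\delta\widetilde p_k,\widetilde x_k\rangle - \langle (D_1\operatorname{Exp}(q^-_k,\widetilde x_k))^* p^+_k,\,\delta q^-_k\rangle .
\]

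It remains to produce the conformal factor. By \eqref{eq:k-}, $y^-_k = \psi_{H;k}(y^+_{k-1})$ for the contactomorphism $\psi_{H;k}$ of \eqref{eq:psiHi}, so $\delta y^-_k = d\psi_{H;k}(\delta y^+_{k-1})$, and the conformal relation $\psi_{H;k}^*\lambda = e^{g_{\psi_{H;k}}}\lambda$ yields $\lambda(\delta y^-_k) = e^{g_{(k-1)k}}\lambda(\delta y^+_{k-1})$ with $g_{(k-1)k} = g_{\psi_{H;k}}(y^+_{k-1})$; feeding this and $\delta z^-_k = \lambda(\delta y^-_k) + \langle p^-_k,\delta q^-_k\rangle$ into the previous display and reorganising the $\delta q^-_k$-terms gives the recurrence \eqref{eq:lambda-deltayk+rec}. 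The base case is immediate: $y^+_0 = (q_0,0,0)$ varies within the zero section $o_{J^1B}$, which is Legendrian, so $\lambda(\delta y^+_0) = 0$. The main obstacle is the tensorial bookkeeping in the middle step — being scrupulous about covariant versus ordinary derivatives of the parallel-transported fields $\widetilde x_k,\widetilde p_k$, whose footpoints $q^-_k$ themselves move, and about the base-point derivative $D_1\operatorname{Exp}$ — and then checking that, apart from the three displayed terms, everything (including the curvature contributions entering the Jacobi-field expression for $\delta q^+_k$) cancels. This is precisely where the contact computation is heavier than its symplectic prototype in \cite{laud-sikorav}, the extra weight coming from the $z$-coordinate and the conformal factor $e^{g_{(k-1)k}}$.
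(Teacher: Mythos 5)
Your proof is correct and follows essentially the same route as the paper's: the same Gauss--lemma cancellation giving $z^+_k - z^-_k = \langle\widetilde p_k,\widetilde x_k\rangle$, the same cancellation of $\langle\widetilde p_k,\delta\widetilde x_k\rangle$ against the fibre part of $\langle p^+_k,\delta q^+_k\rangle$ via $(d\exp_{q^-_k}(\widetilde x_k))^*p^+_k=\widetilde p_k$, and the conformal factor extracted from $\lambda(\delta y^-_k)=e^{g_{(k-1)k}}\lambda(\delta y^+_{k-1})$, which the paper merely packages through Lemma \ref{lem:deltaAi} instead of invoking directly. Note only that, exactly as in the paper's own final display, your reorganisation actually yields the pairing $\langle\left(D_1\operatorname{Exp}(q^-_k,\widetilde x_k)\right)^*(p^+_k)-p^-_k,\,\delta q^-_k\rangle$, i.e.\ $\langle y_k,\delta q^-_k\rangle$, so the formula as displayed in the lemma statement appears to drop the $-p^-_k$ term.
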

\begin{proof}
Note that
\bea\label{eq:z+k-z-k}
z^+_k - z^-_k &=& \int_{c_k} pdq = \int_0^1 pdq(\dot c _k (t))dt  \nonumber\\
&=& \int_0^1 \left\langle \left((d\exp_{q_k^-}(t\widetilde{x}_k))^*\right)^{-1}(\widetilde{p}_k),
d\exp_{q_k^-}(t\widetilde{x}_k)(\widetilde{x}_k) \right\rangle\, dt \nonumber \\
&=& \langle \widetilde{p}_k, \widetilde{x}_k \rangle
\eea
for each $k$.
Then we have
$$
\lambda (\delta y^+_k) = \delta z^+_k - \langle p^+_k, \delta q^+_k \rangle
= \delta z^-_k + \langle \delta \widetilde{p}_k, \widetilde{x}_k \rangle + \langle \widetilde{p}_k, \delta \widetilde{x}_k \rangle 
- \langle p^+_k, \delta q^+_k \rangle.
$$
Since
$$
q^+_k = \exp_{q^-_k}(\widetilde{x}_k) = \operatorname{Exp}(q^-_k,\widetilde{x}_k)
$$
we have
$$
\delta q^+_k = D_1\operatorname{Exp}(q^-_k,\widetilde{x}_k) \delta q^-_k 
+ (d_2\operatorname{Exp}(q^-_k,\widetilde{x}_k))\delta \widetilde{x}_k.
$$
Then we have
\bea
\langle p^+_k, \delta q^+_k \rangle 
&=& \langle \left( D_1\operatorname{Exp}(q^-_k,\widetilde{x}_k) \right)^* (p^+_k), \delta q^-_k \rangle
+ \langle (d\exp_{q^-_k} (\widetilde{x}_k))^*(p^+_k), \delta \widetilde{x}_k \rangle \nonumber \\
&=& \langle \left( D_1\operatorname{Exp}(q^-_k,\widetilde{x}_k) \right)^* (p^+_k), \delta q^-_k \rangle
+ \langle \widetilde{p}_k , \delta \widetilde{x}_k \rangle \label{eq:pdqk+}
\eea
since
$$
d_2\operatorname{Exp}(q^-_k, \widetilde{x}_k) = d\exp_{q^-_k}(\widetilde{x}_k).
$$
Now we have
\beastar
\lambda (\delta y^+_k)
&=& \delta z^-_k + \langle \delta \widetilde{p}_k, \widetilde{x}_k \rangle 
+ \langle \widetilde{p}_k, \delta \widetilde{x}_k \rangle 
- \langle p^+_k, \delta q^+_k \rangle \\
&=& \delta z^-_k + \langle \delta \widetilde{p}_k, \widetilde{x}_k \rangle 
- \langle \left( D_1\operatorname{Exp}(q^-_k,\widetilde{x}_k) \right)^* (p^+_k), \delta q^-_k \rangle.
\eeastar
Finally, by using \eqref{eq:deltaAi}, \eqref{eq:deltaAi2}, and \eqref{eq:lambda-deltayk},
we have the recurrence relation
\beastar
\lambda (\delta y^+_k)
&=& \delta z^+_{k-1} + \langle p^-_k, \delta q^-_k \rangle - \langle p^+_{k-1}, \delta q^+_{k-1} \rangle 
+ (e^{g_{(k-1)k}} - 1)\lambda(\delta y^+_{k-1}) \\
& &+ \langle \delta \widetilde{p}_k, \widetilde{x}_k \rangle 
- \langle \left( D_1\operatorname{Exp}(q^-_k,\widetilde{x}_k) \right)^* (p^+_k), \delta q^-_k \rangle \\
&=& e^{g_{(k-1)k}}\lambda (\delta y^+_{k-1}) + \langle \delta \widetilde{p}_k , \widetilde{x}_k \rangle
- \langle \left( D_1\operatorname{Exp}(q^-_k,\widetilde{x}_k) \right)^* (p^+_k) - p^-_k, \delta q^-_k \rangle.
\eeastar
\end{proof}

The following proposition is the contact counterpart of 
\cite[Lemma 2.1]{laud-sikorav}. However we would like to attract readers' attention to
the fact that its proof is much more difficult than that of \cite[Lemma 2.1]{laud-sikorav}.

\begin{prop}\label{prop:local-coord}
Let $\epsilon_0 > 0$ be the constant appearing in \eqref{eq:epsilon0}, and assume $\epsilon_0 < \iota_g$.
Then there exists a sufficiently large integer $N$ and small $\epsilon_0> 0$ such that the
collection $\{(q_k^-, \widetilde{p}_k)\}_{k=1}^{N-1}$  and $q_N^-$ form a system of local coordinates on $U$.
\end{prop}
\begin{proof} We will choose $\epsilon_0$ and $ N > 4(2C_1 + C_2 R_0)$ given as in \eqref{eq:epsilon0N}.
We need to study the Jacobian of the map
$$
(q_0,\widetilde{X},\widetilde{P}) \mapsto \left(q_1^-, \widetilde p_1, \cdots, q_{N-1}^-, \widetilde p_{N-1}, q_N^-\right).
$$
For this purpose, we  first express $\delta x_k$ as a linear combination 
\be\label{eq:deltaxk3}
\delta \widetilde x_k = \sum_{i=1}^{k+1}a_k^i \delta q^-_i + \sum_{i=1}^k b_k^i \delta \widetilde p_i
\ee
of 
$$
\{\delta q^-_i \}_{i=1}^{k+1}  \cup \{ \delta \widetilde p_i\}_{i=1}^k.
$$
After some tedious calculations, the details of which are postponed to Appendix \ref{sec:deltax},
we obtain the following formula (See equation \eqref{eq:Dkdeltaxk}.):
\bea\label{eq:deltakx1}
& &\left(D_1(q\circ \psi_{k+1})d\exp_{q^-_k}(\widetilde{x}_k) 
+ d_3(q\circ \psi_{k+1})\langle \widetilde{p}_k, \cdot \rangle\right) \delta \widetilde{x}_k 
\nonumber\\
&=& \delta q^-_{k+1}
- \left(D_1(q\circ \psi_{k+1})D_1\operatorname{Exp}(q^-_k,\widetilde{x}_k) 
+ d_3(q\circ\psi_{k+1})\langle p^-_k, \cdot \rangle\right) \delta q^-_k 
\nonumber\\
& &- \left(d_2(q\circ \psi_{k+1})(d\exp_{q^-_k}(\widetilde{x}_k)^*)^{-1} 
+ d_3(q\circ \psi_{k+1})\langle \cdot, \widetilde{x}_k \rangle \right)\delta \widetilde{p}_k 
\nonumber\\
& &- d_3(q\circ \psi_{k+1}) \sum_{i=1}^{k-1} e^{g_{ik}}(\langle \delta \widetilde{p}_i, \widetilde{x}_i \rangle 
- \langle r_i, \delta q^-_i \rangle)
\eea
where we put
\be\label{eq:ri}
r_i = D_1\operatorname{Exp}(q^-_i,\widetilde{x}_i)^*(p^+_i) - p^-_i.
\ee
Therefore we can rewrite \eqref{eq:deltakx1} into
\bea\label{eq:deltaxk2}
\delta \widetilde{x}_k & = & 
(D^k)^{-1}\Big[\delta q^-_{k+1} 
- \left(D_1(q\circ \psi_{k+1})D_1\operatorname{Exp}(q^-_k,\widetilde{x}_k) 
+ d_3(q\circ\psi_{k+1})\langle p^-_k, \cdot \rangle\right) \delta q^-_k  \nonumber \\
& &- \left(d_2(q\circ \psi_{k+1})(d\exp_{q^-_k}(\widetilde{x}_k)^*)^{-1} 
+ d_3(q\circ \psi_{k+1})\langle \cdot, \widetilde{x}_k \rangle \right)\delta \widetilde{p}_k \nonumber \\
& &- d_3(q\circ \psi_{k+1}) \sum_{i=1}^{k-1} e^{g_{ik}}(\langle \delta \widetilde{p}_i, \widetilde{x}_i \rangle 
- \langle r_i, \delta q^-_i \rangle)\Big]
\eea
if we make a sufficiently fine partition of the interval $[0,1]$ (i.e., by letting $N$ sufficiently large), 
and choose $\epsilon_0$ sufficiently small.

From the formula \eqref{eq:deltaxk2}, it is easy to see that the coordinate change map inside 
the big square parenthesis  has its Jacobian that has the form of an upper triangular matrix with nonzero diagonal 
elements with respect to the basis
$$
\left((\delta q^-_k, \delta \widetilde{p}_k)_{k=1,\ldots,N-1}, \delta q^-_{k+1}\right).
$$
Since each $D^k$ is invertible, we conclude that the coordinate change map 
$$
\left((q^-_k, \widetilde{p}_k)_{k=1,\ldots,N-1}, q^-_N\right) \mapsto (q_0, \widetilde{X}, \widetilde{P})
$$
is invertible. For readers' convenience, we provide details of this claim in
Appendix \ref{sec:invertibility}.
This finishes the proof of Proposition \ref{prop:local-coord}.
\end{proof}

Now we prove the following proposition.

\begin{prop}\label{prop:vert-crit}
The differential of $\widetilde S$ is written as
\be\label{eq:deltaS2}
\delta \widetilde S = \sum_{k=1}^{N-1} e^{g_k}(\langle \delta \widetilde{p}_k, \widetilde{x}_k \rangle 
- \langle r_k, \delta q^-_k \rangle)
+ \langle p^-_N, \delta q^-_N \rangle
\ee
where 
$$
g_k := \sum_{i=k+1}^{N} g_{(i-1)i} = g_{\psi_{k+1}}(y^+_k) + \cdots + g_{\psi_{N}}(y^+_{N-1})
$$
and
$r_k = \left( D_1\operatorname{Exp}(q^-_k,\widetilde{x}_k) \right)^* (p^+_k) - p^-_k$ as defined
in \eqref{eq:ri}. 
\end{prop}
\begin{proof}
By \eqref{eq:deltaS}, \eqref{eq:deltaAi2} and \eqref{eq:pdqk+}, we have
\beastar
\delta \widetilde S &=& \sum_{k=1}^{N-1} (\langle \delta \widetilde{p}_k, \widetilde{x}_k \rangle 
+ \langle \widetilde{p}_k, \delta \widetilde{x}_k \rangle) \\
& &+ \sum_{k=1}^{N} \left(\langle p^-_k, \delta q^-_k \rangle - \langle p^+_{k-1}, \delta q^+_{k-1} \rangle 
+ (e^{g_{(k-1)k}} - 1)\lambda(\delta y^+_{k-1})\right) \\
&=& \sum_{k=1}^{N-1} (\langle \delta \widetilde{p}_k, \widetilde{x}_k \rangle 
+ \langle \widetilde{p}_k, \delta \widetilde{x}_k \rangle
+ \langle p^-_k, \delta q^-_k \rangle - \langle p^+_k, \delta q^+_k \rangle) \\
& &+ \langle p^-_N, \delta q^-_N \rangle + \sum_{k=1}^{N-1} (e^{g_{k(k+1)}} - 1)\lambda(\delta y^+_k) \\
&=& \sum_{k=1}^{N-1} (\langle \delta \widetilde{p}_k, \widetilde{x}_k \rangle - \langle r_k, \delta q^-_k \rangle
+ (e^{g_{k(k+1)}} - 1)\lambda(\delta y^+_k)) + \langle p^-_N, \delta q^-_N \rangle.
\eeastar
For simplicity of the exposition, we write
$$
\Delta_k := \langle \delta \widetilde{p}_k, \widetilde{x}_k \rangle - \langle r_k, \delta q^-_k \rangle.
$$
By the recurrence relation \eqref{eq:lambda-deltayk+rec}, we have
\beastar
\delta \widetilde S &=& \sum_{k=1}^{N-1} \left(\Delta_k 
+ (e^{g_{k(k+1)}} - 1)(e^{g_{(k-1)k}}\lambda(\delta y^+_{k-1}) + \Delta_k)\right)
+ \langle p^-_N, \delta q^-_N \rangle \\
&=& \sum_{k=1}^{N-2} \left(e^{g_{k(k+1)}}\Delta_k + (e^{g_{(k+1)(k+2)}} - 1)e^{g_{k(k+1)}}\lambda(\delta y^+_k)\right) \\
& &+ e^{g_{(N-1)N}}\Delta_{N-1} + \langle p^-_N, \delta q^-_N \rangle \\
&=& \cdots
= \sum_{k=1}^{N-1} e^{g_{(N-1)N}}\cdots e^{g_{k(k+1)}}\Delta_k
+ \langle p^-_N, \delta q^-_N \rangle
\eeastar

To obtain the vertical critical points, we fix $q_N^-$ and compute the first variation of $\widetilde S$. Then the last term $\langle p_N^-, \delta q_N^-\rangle$ in the above expression for $\delta \widetilde S$ vanishes, and we obtain the vertical critical equation
$$
\widetilde{x}_k = 0, \quad r_k = 0
$$
for all $k=1,\ldots, N-1$.
\end{proof}

The following is an immediate corollary of this proposition.
\begin{cor}\label{cor:smoothing}
For any vertical critical point $e = (q_0, X, P) \in E$ of $\widetilde S$, we have 
$$ 
\widetilde{x}_k = 0, \quad r_k = 0 
$$
for all $k$. In particular, the broken Hamiltonian trajectory 
\be\label{eq:gamma-concatenation}
\gamma_1 \# \cdots \# \gamma_N
\ee
associated to $\{\gamma_k\}_{1 \leq k \leq N}$ joins into one smooth trajectory 
$\gamma$ from $o_{J^1B}$ to $\psi^1_H (o_{J^1B})$.
\end{cor}
\begin{proof}
Note that the condition $\widetilde{x}_k = 0$ gives rise to 
$$
q_k^- = q_k^+, \quad \widetilde{p}_k = p_k^+, \quad z_k^- = z_k^+
$$
where the last equality follows from \eqref{eq:z+k-z-k}.
In addition, the condition $r_k = 0$ also implies
$$
\gamma_{k} (1) = (q_k^-, p_k^-, z_k^-) = (q_k^+, p_k^+, z_k^+) = \gamma_{k+1} (0).
$$
Therefore the concatenation \eqref{eq:gamma-concatenation} associated to
$\{\gamma_k\}_{1\leq k\leq N}$ becomes one continuous and piecewise smooth trajectory 
$\gamma$ leaving from $o_{J^1B}$ at $t = 0$ and reaching $\psi^1_H(o_{J^1B})$ at $t=1$.
Since $\gamma$ is \emph{continuous} and satisfies the ODE
$$
\dot \gamma (t) = X_H(t, \gamma(t))
$$
\emph{away from a finite number of points $\{t_k\}$}, it must be smooth: This is a consequence of 
the standard existence, uniqueness and differentiability theorem of first order ODE 
(see \cite[Section 32]{arnold:ODE}, for example) or can be directly shown
by the standard boot-strap argument.
Therefore such  $\gamma$ is a smooth contact Hamiltonian trajectory of $H$.
\end{proof}

We denote by $\Sigma_{\widetilde S}$ the vertical critical locus of $\widetilde S$.
Recall that the Legendrian immersion $\iota_{\widetilde S} : \Sigma_{\widetilde S} \to J^1B$ is given by
$$
\iota_{\widetilde S} (e) = (\Pi_E(e), \delta \widetilde S(e), \widetilde S(e)).
$$
For each $e \in \Sigma_{\widetilde S}$, the differential satisfies 
$\delta \widetilde S(e) = \langle p_N^-, \delta q_N^- \rangle$ since $e$ is a vertical critical point.
Moreover, $e$ corresponds to a single smooth Hamiltonian trajectory $\gamma$ connecting $o_{J^1B}$ to $\psi_H^1(o_{J^1B})$, and the value $\widetilde S(e)$ is given by the $z$-coordinate difference:
$$
\widetilde S(e) = \int_\gamma dz = z(\gamma(1))-z(\gamma(0))=z_N^-.
$$
Hence, we recover
$$
\iota_{\widetilde S}(e)=(q_N^-,p_N^-,z_N^-)=\gamma(1) \in \psi_H^1(o_{J^1B}),
$$
which shows that $\widetilde S$ generates $\psi_H^1(o_{J^1B})$.

This would have finished the proof of the following theorem
\emph{if we have been working on $S = \widetilde S \circ \Psi_\delta$}. Recall that we have
been working on $\widetilde S$ on $U_{\epsilon_0}$.

\begin{prop}[Compare with Lemma 2.6, \cite{laud-sikorav}] Let $0 < \epsilon_0 < \iota_g$ and $0 < \delta < \epsilon_0$
be as in \eqref{eq:norm-tildeXk}. Then provided $N> 0$ is sufficiently large,
\begin{enumerate}
\item
the vertical derivative
$\delta^{\rm vert}(A \circ \Psi_\delta)$ on $E = E_N$ is  bounded, 
\item $\Sigma_{\widetilde{S} \circ \Psi_\delta} = \Sigma_{S} \cap U_{\epsilon_0}$, which in particular implies 
$\psi_H^1(o_{J^1B}) = R_S$ for $S = \widetilde{S} \circ \Psi_\delta$. 
\end{enumerate}
\end{prop}
\begin{proof}
We have already shown in \eqref{eq:image-tildegamma} that if we choose $\epsilon_0 < \iota_g$ and 
$0 < \delta \ll \epsilon_0$ and $N$ sufficiently large,
$$
\Image \tildegamma_{(\tildeX_k,\tildeP_k)} \subset U_{\epsilon_0}(J^1B)
$$
(see \eqref{eq:tildegammak} for the definition of $\tildegamma_{(\tildeX_k,\tildeP_k)}$),
and $\Sigma_{\widetilde S} \subset U_\delta(E)$ from Corollary \ref{cor:subset}
for all $k$.
Furthermore $\Psi_\delta: E \to U_{\epsilon_0}$ is well-defined.
The definition $S = \widetilde S \circ \Psi_\delta$ also implies
$$
d^v S = d^v \widetilde S \circ D\Psi_\delta.
$$
Since $\Psi_\delta: E \setminus U_\delta(E) \to 
U_{\epsilon_0}(E)\setminus U_\delta(E)$  is a 
diffeomorphism and $\Sigma_{\widetilde S} \cap (U_{\epsilon_0}(E)\setminus U_\delta(E)) = \emptyset$,
this completes the proof of
$$
R_S = \psi_H^1(o_{J^1B}).
$$
\end{proof}

By this proposition, we now conclude the proof of the main theorem.

\appendix

\section{The Carnot path space and horizontal curves of the contact distribution}
\label{sec:Carnot-horizontal}

In this section, we provide some relationship between $\CL^{\text{\rm Carnot}}(M,H)$ and
the space of horizontal curves of contact distribution $\xi$ on $M$.

We first recall the notion of \emph{horizontal curves} of a smooth distribution on a manifold $M$
in general. (See \cite{mitchell} for some background on the general study of the space of horizontal curves
of a distribution satisfying H\"ormander's condition.) We closely follow the exposition of
\cite{mitchell} on the Carnot-Carath\'eodory metrics on the distribution satisfying H\"ormander's condition.

\begin{defn} Let $\Delta$ be a distribution of a smooth (connected) manifold $M$. An absolutely
continuous curve $\alpha$ on $M$ is said to be horizontal if it is a.e. tangent to the
distribution $\Delta$.
\end{defn}

Now we recall the following notion of H\"ormander's condition.

\begin{defn}[H\"ormander \cite{hormander:hypoelliptic}] 
Let $\{X_1, X_2, \cdots, X_k\}$ be a local basis of vector fields for $\Delta$ near $m \in M$.
If these vector fields, along with all their commutators, span $T_mM$, then these vector fields are said to
satisfy H\"ormander's condition at $m$.
\end{defn}

It is well-known and easy to check that the above definition does not depend on the choice of local
basis $\{X_1,X_2, \cdots, X_k\}$. Then the following fundamental local transitivity of
space of horizontal curves is proved by Chow \cite{chow}.

\begin{thm}[Chow]\label{thm:local-transitivity}
 If a smooth distribution $\Delta$ satisfies H\"ormander's condition at $m \in M$, then any point $p \in M$
which is sufficiently close to $m$ may be joined to $m$ by a horizontal curve.
\end{thm}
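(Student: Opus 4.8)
The statement is the classical theorem of Chow and Rashevskii; the plan is to follow the standard commutator–flow argument (see also \cite{montgomery}). First I would reduce it to a reachability assertion: since H\"ormander's condition is open, it holds on a neighborhood $V$ of $m$, so replacing $M$ by $V$ we may assume it holds at every point. It then suffices to show that the set $\mathcal{A}(m)$ of points joinable to $m$ by a piecewise smooth horizontal curve — that is, by a concatenation of finitely many integral arcs of the vector fields $\pm X_1,\dots,\pm X_k$, which is in particular absolutely continuous and a.e.\ tangent to $\Delta$ — contains a neighborhood of $m$.

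The key ingredient is the \emph{commutator maneuver}. Given two horizontal vector fields $X,Y$ with local flows $\phi^s_X,\phi^s_Y$, set, for $t\ge 0$,
\[
c_{X,Y}(t)\;=\;\phi^{-\sqrt t}_Y\circ\phi^{-\sqrt t}_X\circ\phi^{\sqrt t}_Y\circ\phi^{\sqrt t}_X .
\]
For each fixed $x$ this is the endpoint of a horizontal path built from four integral arcs, and a second–order Taylor expansion gives $c_{X,Y}(t)(x)=x+t\,[X,Y](x)+o(t)$ (up to an irrelevant sign). Iterating the construction, with a further square–root reparametrization at each stage, for any iterated bracket $Z=[X_{i_1},[X_{i_2},[\cdots]]]$ of depth $\ell$ one obtains a map $c_Z(t)$, $t\ge 0$, again realized by a horizontal concatenation, with $c_Z(t)(x)=x+t^{1/2^{\ell}}\,(\mathrm{const})\,Z(x)+\cdots$; absorbing the constant and the fractional power into the parameter we may assume $\tfrac{d}{dt}\big|_{t=0^+}c_Z(t)(x)=Z(x)$.

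Next, using H\"ormander's condition at $m$, I would pick iterated brackets $Z_1,\dots,Z_n$ of the $X_i$'s, with $n=\dim M$, whose values $Z_1(m),\dots,Z_n(m)$ form a basis of $T_mM$, and define
\[
\Phi:[0,\epsilon)^n\longrightarrow M,\qquad
\Phi(s_1,\dots,s_n)=\big(c_{Z_n}(s_n)\circ\cdots\circ c_{Z_1}(s_1)\big)(m).
\]
Then $\Phi(0)=m$, every point of $\Image(\Phi)$ lies in $\mathcal{A}(m)$ (concatenate the underlying horizontal arcs, all contained in $V$), and the one–sided derivatives $\partial\Phi/\partial s_j|_{0}=Z_j(m)$ are linearly independent. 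A $C^1$ / degree argument then finishes: either one checks that after the rescalings $\Phi$ extends to a $C^1$ map on a full neighborhood of $0$ with invertible differential there (using $c_Z(-s):=c_{-Z}(s)$) and applies the inverse function theorem, or one observes that $\Phi$ restricted to the boundary of a small cube is, via its asymptotic expansion, homotopic in $M\setminus\{m\}$ to a nondegenerate linear map and so has nonzero local degree about $m$, whence $\Phi$ maps a neighborhood of $0$ onto a neighborhood of $m$. Either way $\mathcal{A}(m)$ contains a neighborhood of $m$, which is the claim.

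The main obstacle — indeed the only genuinely delicate point — is exactly this last regularity issue: the bracket maneuvers produce a map that is a priori only H\"older continuous in the outer parameters, so one must either carry out the somewhat fiddly Taylor bookkeeping that restores $C^1$–smoothness after the rescalings, or bypass the inverse function theorem by the topological degree argument just sketched. Everything else is routine flow calculus. An alternative, more structural packaging of the same facts is to invoke Sussmann's orbit theorem: the orbit of $\{X_1,\dots,X_k\}$ through $m$ is an immersed submanifold whose tangent space at each of its points contains all iterated brackets evaluated there, hence — because H\"ormander's condition has been arranged to hold everywhere on $V$ — is open; since horizontal curves are reversible, this orbit coincides with $\mathcal{A}(m)$, and being open it contains a neighborhood of $m$.
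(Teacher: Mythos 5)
The paper does not prove this statement: it is quoted verbatim as Chow's classical theorem with a citation to \cite{chow} (and the surrounding exposition follows \cite{mitchell}), so there is no in-paper argument to compare yours against. Your sketch is the standard Chow--Rashevskii proof and is essentially correct: the reduction to a neighborhood where H\"ormander's condition holds everywhere, the commutator maneuver $\phi^{-\sqrt t}_Y\circ\phi^{-\sqrt t}_X\circ\phi^{\sqrt t}_Y\circ\phi^{\sqrt t}_X = \mathrm{id} + t\,[X,Y]+o(t)$, the iteration to reach brackets of higher depth, and the choice of brackets $Z_1,\dots,Z_n$ spanning $T_mM$ are all the right ingredients, and you correctly isolate the one genuinely delicate point, namely that the resulting map $\Phi$ is a priori only H\"older in the outer parameters, so that one must either redo the Taylor bookkeeping to recover $C^1$ regularity at the origin (using $c_{-Z}$ to extend to negative parameters, which is available since $-[X,Y]=[Y,X]$ and similarly for deeper brackets) or replace the inverse function theorem by a local degree argument on the boundary of a small cube. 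The closing appeal to Sussmann's orbit theorem is a clean alternative: the orbit of $\{X_1,\dots,X_k\}$ through $m$ is an immersed submanifold whose tangent spaces contain all iterated brackets, hence is open under the (locally uniform) H\"ormander condition, and it coincides with the set reachable from $m$ by piecewise horizontal curves. Either route is a legitimate proof of the quoted theorem; since the paper only uses the statement as imported background for the local transitivity of the Carnot path space (Corollary \ref{cor:xi-locally-transitive}), nothing in the paper hinges on which proof one adopts.
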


An immediate consequence of this theorem is the following

\begin{cor} Suppose $M$ is connected and equipped with a Riemannian metric.
Let $\Delta$ be a distribution that satisfies H\"ormander's condition.
Consider the function $d_c: M \times M \to \R_{\geq 0}$ defined by
$$
d_c(p,q): = \inf_{\ell \in C_{p,q}}\{\leng(\ell)\}
$$
where $C_{p,q}$ is the set of all horizontal curves which join $p$ to $q$ and $\leng(\ell)$ is the length of
$\ell$ associated to the given Riemannian metric. Then $d_c$ defines a finite metric.
We call $d_c$ the Carnot-Carath\'eodory distance associated to the distribution $\Delta$.
\end{cor}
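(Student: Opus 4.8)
The plan is to verify finiteness of $d_c$ together with the metric axioms; the only steps with genuine content are finiteness and positive-definiteness, while symmetry, the triangle inequality, and $d_c(p,p)=0$ are formal. First I would prove \textbf{finiteness}, i.e.\ that $C_{p,q}\neq\emptyset$ for all $p,q$. Fix $p$ and let $U_p\subset M$ be the set of points that can be joined to $p$ by a horizontal curve; it is nonempty since $p\in U_p$ via the constant curve. It is open: if $q\in U_p$, Theorem~\ref{thm:local-transitivity} gives a neighborhood $V$ of $q$ each point of which is joined to $q$ by a horizontal curve, and concatenating such a curve with a horizontal curve from $p$ to $q$ (a concatenation of horizontal curves, suitably reparametrized, being again horizontal) shows $V\subset U_p$. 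Its complement is open for the same reason: if $q\notin U_p$ and $V'$ is a neighborhood of $q$ as in Theorem~\ref{thm:local-transitivity}, no point of $V'$ can lie in $U_p$, since one could then reach $q$ from $p$ by first going to that point and then along a horizontal curve inside $V'$, contradicting $q\notin U_p$. As $M$ is connected, $U_p=M$, so $d_c(p,q)<\infty$.

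Next I would dispatch the \textbf{metric axioms}. Non-negativity is immediate from $\leng(\ell)\geq 0$, and $d_c(p,p)=0$ from the constant curve at $p$. Symmetry holds because time-reversal $t\mapsto\ell(1-t)$ is a length-preserving bijection $C_{p,q}\to C_{q,p}$, the reversal of a horizontal curve being again horizontal. For the triangle inequality, given $\ell_1\in C_{p,q}$ and $\ell_2\in C_{q,r}$ the concatenation $\ell_1\#\ell_2\in C_{p,r}$ satisfies $\leng(\ell_1\#\ell_2)=\leng(\ell_1)+\leng(\ell_2)$, and taking infima over $\ell_1$ and then over $\ell_2$ gives $d_c(p,r)\leq d_c(p,q)+d_c(q,r)$.

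The remaining axiom, \textbf{positive-definiteness}, is where I would use the ambient Riemannian structure directly: a horizontal curve is in particular an absolutely continuous curve in $M$, and the quantity $\leng(\ell)$ is by definition its Riemannian length, so $C_{p,q}$ is a subset of the class of all rectifiable curves from $p$ to $q$, whence $d_c(p,q)\geq d_g(p,q)$, where $d_g$ denotes the Riemannian distance on the connected manifold $M$. Since $d_g$ is a genuine metric, $d_c(p,q)=0$ forces $d_g(p,q)=0$ and hence $p=q$. I expect the only real obstacle to be the clopen argument in the finiteness step, and specifically the need to apply Theorem~\ref{thm:local-transitivity} \emph{symmetrically} — both that points near $q$ reach $q$ (openness of $U_p$) and that $q$ is reached from points near it (openness of the complement) — which is precisely the local transitivity that Chow's theorem supplies.
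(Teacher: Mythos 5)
Your proof is correct and is precisely the standard argument that the paper implicitly invokes when it states this corollary as an ``immediate consequence'' of Chow's theorem without writing out a proof: the clopen/connectedness argument for finiteness (using that reversal and concatenation preserve horizontality) together with the comparison $d_c \geq d_g$ for positive-definiteness. Nothing is missing, and there is no alternative route in the paper to compare against.
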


The following is another interesting result proved by Mitchell \cite{mitchell}.
Denote by $V_i(m)$ the subspace of $T_mM$ spanned by all commutators of the $X_j$'s of order $\leq i$
(including the $X_j$'s). It is easy to see that $V_i(m)$ does not depend on the choice of the local basis
$\{X_j\}$.

\begin{defn}[Mitchell \cite{mitchell}] A distribution $\Delta$ is said to be \emph{generic}
if, for each $i\geq 1$, $\dim(V_i(m))$ is independent of the point $m$.
\end{defn}

\begin{thm}[Theorem 2 \cite{mitchell}]\label{thm:mitchell-dim} For a generic distribution $\Delta$ the Hausdorff dimension of the
metric space $(M,d_c)$ is
$$
Q = \sum_{i=1} i (\dim V_i - \dim V_{i-1}).
$$
\end{thm}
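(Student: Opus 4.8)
The plan is to reduce Mitchell's dimension formula to a uniform two-sided volume estimate for small Carnot--Carath\'edory balls (the \emph{ball-box theorem}) and then to extract the Hausdorff dimension from a general Ahlfors-regularity argument. Fix a reference Riemannian volume $\operatorname{vol}$ on $M$ (the Corollary above already equips $M$ with a Riemannian metric). I would first prove that there are constants $0 < c \le C$ such that for every $m \in M$ and every sufficiently small $\epsilon > 0$,
\[
c\,\epsilon^{Q} \;\le\; \operatorname{vol}\big(B_{d_c}(m,\epsilon)\big) \;\le\; C\,\epsilon^{Q}, \qquad Q = \sum_{i \ge 1} i\,(\dim V_i - \dim V_{i-1}),
\]
where $B_{d_c}(m,\epsilon)$ is the $d_c$-metric ball and $\dim V_0 := 0$. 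The genericity (equiregularity) hypothesis is exactly what makes $c,C$ independent of $m$: the integers $\dim V_i(m)$ are constant, hence $Q$ and the anisotropic scaling exponents introduced below do not vary with $m$.

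For the ball-box estimate I would construct \emph{privileged (adapted) coordinates} $(x_1,\dots,x_n)$ centred at $m$: order a local frame so that the first $\dim V_1$ vectors span $\Delta = V_1$, the next $\dim V_2 - \dim V_1$ together with order-$2$ brackets fill in $V_2$, and so on, and use their flows to build coordinates. Assign to the $j$-th coordinate the weight $w_j = i$ where $i$ is the smallest index with the corresponding direction lying in $V_i$; then $\sum_{j=1}^n w_j = \sum_{i \ge 1} i(\dim V_i - \dim V_{i-1}) = Q$. The heart of the matter is the quantitative Chow estimate: along a horizontal curve of $d_c$-length $\le \epsilon$ one moves at most $O(\epsilon)$ in the weight-$1$ directions and, since a net displacement in a weight-$i$ direction can only be produced by an $i$-fold iterated commutator of horizontal flows, at most $O(\epsilon^i)$ in the weight-$i$ directions; conversely, such an $i$-fold commutator of horizontal flows each run for time $\sim \delta$ produces a genuine displacement of order $\delta^i$ in the corresponding weight-$i$ direction. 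Together these show that in privileged coordinates
\[
\prod_{j=1}^n \big[-c'\epsilon^{w_j},\, c'\epsilon^{w_j}\big] \;\subset\; B_{d_c}(m,\epsilon) \;\subset\; \prod_{j=1}^n \big[-C'\epsilon^{w_j},\, C'\epsilon^{w_j}\big],
\]
and since the Euclidean volume of such a box is comparable to $\prod_{j} \epsilon^{w_j} = \epsilon^{Q}$ and the coordinate change is bi-Lipschitz near $m$, the displayed volume estimate follows; uniformity in $m$ is obtained by covering a compact exhaustion of $M$ by finitely many such charts, using equiregularity so that the weights, hence $Q$, are the same throughout.

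Granting the volume estimate, $\mu := \operatorname{vol}$ is locally Ahlfors $Q$-regular for $d_c$, and the Hausdorff dimension is read off by the two usual inequalities. For $\dim_{\mathcal H}(M,d_c) \le Q$: on a compact $K \subset M$ take a maximal $\epsilon$-separated set; the disjoint $d_c$-balls of radius $\epsilon/2$ about these points have $\mu$-measure at least $c''\epsilon^{Q}$ each, so there are at most $C_K \epsilon^{-Q}$ of them, and the corresponding radius-$\epsilon$ balls cover $K$ with $\sum (2\epsilon)^{Q+\eta} \le C_K' \epsilon^{\eta} \to 0$, so $\mathcal H^{Q+\eta}(K) = 0$ for every $\eta > 0$. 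For $\dim_{\mathcal H}(M,d_c) \ge Q$: the mass distribution principle applied to $\mu$, using $\mu(B_{d_c}(x,r)) \le C r^{Q}$ and hence $\mu(U) \le C(\operatorname{diam} U)^{Q}$ for small $U$, gives $\mathcal H^{Q}(E) \ge \mu(E)/C > 0$ for any $E$ of positive volume. Combining these, $\dim_{\mathcal H}(M,d_c) = Q$.

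The main obstacle is the ball-box theorem, and within it the \emph{lower} inclusion $\prod_j[-c'\epsilon^{w_j},c'\epsilon^{w_j}] \subset B_{d_c}(m,\epsilon)$: the upper inclusion only expresses that horizontal displacement cannot outrun the filtration $V_1 \subseteq V_2 \subseteq \cdots$, which is bookkeeping, whereas the lower inclusion is a genuinely quantitative form of Chow's theorem and is cleanest via the nilpotent (Carnot group) approximation of $(M,d_c)$ at $m$ --- one shows the anisotropically rescaled metrics $\delta_{1/\epsilon}^* d_c$ converge to the left-invariant Carnot--Carath\'edory metric of the tangent Carnot group, whose balls are precisely boxes of the stated anisotropic shape --- and it is exactly here that equiregularity is essential, so that this tangent group, and with it the exponents $w_j$, is the same at every point and all estimates can be made uniform on compact pieces of $M$.
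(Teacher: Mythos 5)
The paper does not prove this statement at all: it is quoted verbatim as Theorem~2 of \cite{mitchell} and used as a black box (only its consequence for the contact distribution, Hausdorff dimension $2n+2$, is computed in the appendix). So there is no internal proof to compare against; what you have written is a proof of the cited result itself. Your outline is correct and is essentially the standard argument in the literature (Mitchell's original proof, as streamlined by the ball-box theorem of Nagel--Stein--Wainger/Gromov and Bella\"iche's privileged coordinates): equiregularity gives constant weights $w_j$ with $\sum_j w_j = Q$, the ball-box inclusions give the two-sided volume estimate $\operatorname{vol}(B_{d_c}(m,\epsilon)) \asymp \epsilon^{Q}$, and Ahlfors $Q$-regularity plus the mass distribution principle converts this into $\dim_{\mathcal H}(M,d_c)=Q$. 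You also correctly isolate the lower box inclusion as the genuinely quantitative input and correctly identify where genericity enters.

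One caveat on your sketch: the upper inclusion is not quite ``bookkeeping.'' It requires the coordinates to be \emph{privileged} (each $x_j$ of nonholonomic order exactly $w_j$ at $m$), and linearly adapted coordinates built naively from a filtration-adapted frame need not have this property; one must correct them by a polynomial change of variables before the estimate $|x_j(\gamma(t))| = O(d_c(m,\gamma(t))^{w_j})$ holds. This is a known technical point rather than a gap in your strategy, but if you were to write the proof out in full it is the step most likely to require care alongside the nilpotent approximation you already flag.
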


Now we specialize the above discussion to the case of contact distribution $\xi$ of contact manifolds $(M,\xi)$.

\begin{prop} The contact distribution $\xi$ satisfies H\"ormander's condition which is
also generic. Moreover the associated Carnot-Carath\'eodory metric space $(M,d_c)$
has Hausdorff dimension $2n+2$.
\end{prop}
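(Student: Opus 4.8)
The plan is to verify the three assertions in turn --- Hörmander's condition, genericity, and the Hausdorff dimension --- the last being an immediate application of Theorem~\ref{thm:mitchell-dim}.

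First I would establish Hörmander's condition, which for a contact distribution is the familiar fact that $\xi$ is bracket-generating of step $2$. Recall that $\xi = \ker\lambda$ with $\lambda \wedge (d\lambda)^n$ nowhere zero, which is equivalent to $d\lambda|_\xi$ being a nondegenerate (symplectic) bilinear form on the rank-$2n$ bundle $\xi$. For any two vector fields $X, Y$ tangent to $\xi$, the identity $d\lambda(X,Y) = X[\lambda(Y)] - Y[\lambda(X)] - \lambda([X,Y]) = -\lambda([X,Y])$ holds since $\lambda(X) = \lambda(Y) = 0$. By nondegeneracy of $d\lambda|_\xi$, at any $m \in M$ one can choose a local frame $\{X_1,\dots,X_{2n}\}$ of $\xi$ --- e.g.\ a Darboux frame such as the one $\left\{\frac{D}{\partial q_i}, \frac{\partial}{\partial p_i}\right\}$ exhibited earlier --- with $d\lambda(X_1,X_2)(m) \neq 0$, hence $\lambda([X_1,X_2])(m) \neq 0$ and $[X_1,X_2]_m \notin \xi_m$. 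Therefore $\xi_m + [\xi,\xi]_m = T_mM$ at every $m$, so the $X_j$'s together with their first-order commutators span $T_mM$; this is exactly Hörmander's condition, satisfied at every point, and Chow's Theorem~\ref{thm:local-transitivity} applies so that $d_c$ is a genuine (finite) metric. In the notation preceding Theorem~\ref{thm:mitchell-dim}, we have shown $V_1(m) = \xi_m$ and $V_i(m) = T_mM$ for all $i \geq 2$, at every point $m$.

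Next, genericity: by the previous step $\dim V_1(m) = 2n$ and $\dim V_i(m) = 2n+1$ for all $i \geq 2$, independently of $m$, so the filtration stabilizes at step $2$ and $\dim V_i(m)$ is constant in $m$ for each $i \geq 1$; thus $\xi$ is generic in the sense of Mitchell. Finally, the Hausdorff dimension follows by plugging into Theorem~\ref{thm:mitchell-dim} with $V_0 = 0$: only the $i=1$ and $i=2$ terms contribute, giving
$$
Q = 1\cdot(\dim V_1 - \dim V_0) + 2\cdot(\dim V_2 - \dim V_1) = 1\cdot 2n + 2\cdot 1 = 2n+2,
$$
so $(M,d_c)$ has Hausdorff dimension $2n+2$. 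I do not expect a serious obstacle here; the only points requiring a little care are the sign convention in the bracket identity $d\lambda(X,Y) = -\lambda([X,Y])$ on $\xi$ (which must be kept consistent with the conventions of Section~2) and the observation that the Darboux frame indeed realizes the nondegeneracy of $d\lambda|_\xi$, both of which are routine.
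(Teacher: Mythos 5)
Your proof is correct and follows the same overall skeleton as the paper's: verify H\"ormander's condition, check genericity of the filtration $V_1 = \xi$, $V_i = T_mM$ for $i \geq 2$, and plug into Mitchell's formula to get $1\cdot 2n + 2\cdot 1 = 2n+2$. The only difference is in how bracket-generation is verified: you argue intrinsically via the identity $d\lambda(X,Y) = -\lambda([X,Y])$ for sections of $\xi$ together with nondegeneracy of $d\lambda|_\xi$, whereas the paper computes the commutators of the explicit Darboux frame $\left\{\frac{D}{\del q_i}, \frac{\del}{\del p_i}\right\}$ and finds $[X_i,Y_j] = -\frac{\del}{\del z}$ directly; your version is coordinate-free and makes clear the statement depends only on the contact condition, while the paper's explicit computation also exhibits the concrete frame it reuses elsewhere. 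Both arguments are routine and complete.
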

\begin{proof} Let $m \in M$ and take a Darboux chart $(q_1, \cdots, q_n, p_1, \cdots, p_n,z)$
on $U$,
so that $\xi = \ker \left(dz - \sum_i p_i dq_i\right)$. Then we can choose a local basis of $\xi$ formed by
$$
\left\{\frac{D}{\del q_i} = \frac{\del}{\del q_i} + p_i \frac{\del}{\del z}, \frac{\del}{\del p_i}\right\}
$$
with $k = 2n$. (See \cite{LOTV} for this choice.) Write $X_i = \frac{D}{\del q_i}$ and $Y_i = \frac{\del}{\del p_i}$.
A direct calculation shows
$$
\{X_i,X_j\} = 0, \, \{Y_i,Y_j\} = 0, \, \{X_i,Y_j\} = - \frac{\del}{\del z}.
$$
Therefore we have
\beastar
V_1(m) & = & \span_\R\left \{\frac{D}{\del q_i} = \frac{\del}{\del q_i} + p_i \frac{\del}{\del z}, \frac{\del}{\del p_i}\right\}\Big|_m\\
V_2(m) & = & \span_\R\left \{\frac{D}{\del q_i} = \frac{\del}{\del q_i} + p_i \frac{\del}{\del z}, \frac{\del}{\del p_i}, \, \frac{\del}{\del z}\right\}\Big|_m = T_m M\\
V_i(m) & = & T_mM \quad \text{\rm for all $i \geq 2$}.
\eeastar
Obviously  the ranks of these vector spaces do not change on $U$ and so $\xi$ is generic
and $V_2(m)$ already spans $T_mM$. Therefore $\xi$ satisfies H\"ormander's condition and is generic.
Furthermore we have derived from Theorem \ref{thm:mitchell-dim}
 the Hausdorff dimension of the associated Carnot-Carath\'eodory metric $d_c$ given by
$$
1 \times (2n-0) + 2 \times (2n+1 - 2n) = 2n+2.
$$
\end{proof}

\begin{cor} \label{cor:xi-locally-transitive}
The set of horizontal paths of $\xi$ is locally transitive.
\end{cor}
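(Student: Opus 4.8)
The plan is to obtain this as a direct consequence of the Proposition just proved together with Chow's theorem (Theorem~\ref{thm:local-transitivity}). Recall that the assertion ``the set of horizontal paths of $\xi$ is locally transitive'' means precisely the following: for every $m \in M$ there is a neighborhood $V$ of $m$ such that every $p \in V$ can be joined to $m$ by a horizontal curve of $\xi$, i.e.\ by an absolutely continuous (in fact piecewise smooth) curve which is a.e.\ tangent to $\xi$. This is exactly the situation to which the general theory recalled in this appendix applies, with $\Delta = \xi$.

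First I would invoke the preceding Proposition, which asserts that the contact distribution $(M^{2n+1},\xi)$ satisfies H\"ormander's condition at every point: in a Darboux chart around a point $m$, the local frame $\{X_i = \tfrac{D}{\del q_i},\; Y_i = \tfrac{\del}{\del p_i}\}$ of $\xi$ has the single nontrivial bracket $\{X_i, Y_j\} = -\,\tfrac{\del}{\del z}$, and the $X_i$, $Y_i$ together with $\tfrac{\del}{\del z}$ span $T_m M$. Since $m$ is arbitrary, H\"ormander's condition holds at every point of $M$.

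Then I would apply Chow's theorem (Theorem~\ref{thm:local-transitivity}) verbatim to $\Delta = \xi$: because $\xi$ satisfies H\"ormander's condition at the (arbitrary) point $m$, any $p$ sufficiently close to $m$ can be joined to $m$ by a horizontal curve of $\xi$. Choosing $V$ to be such a neighborhood of $m$ yields exactly the local transitivity statement, which finishes the proof.

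The only point requiring a word of care is the regularity class of the curves produced: Chow's construction joins nearby points by concatenating short integral-curve segments of the frame fields $X_i, Y_i$, so the resulting horizontal curve is automatically piecewise smooth, which is more than sufficient here and matches the regularity used for $\CL^{\rm Carnot}(M,0)$ elsewhere in the paper. Beyond this there is no genuine obstacle: all the substantive content has already been established in the Proposition, and the corollary is a formal read-off.
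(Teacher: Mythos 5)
Your proposal is correct and is exactly the argument the paper intends: the preceding Proposition establishes that $\xi$ satisfies H\"ormander's condition at every point, and Chow's theorem (Theorem~\ref{thm:local-transitivity}) then gives local transitivity immediately. The paper treats this as an immediate consequence with no further proof, so your read-off (including the remark on piecewise smoothness of the Chow curves) matches the intended route.
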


\section{Formula of $\delta \widetilde{x}$ with respect to the variations $(\delta q^-, \delta \widetilde{p})$}
\label{sec:deltax}

In this section we express $\delta \widetilde{x}_k$ 
with respect to the coordinate $(q^-_i, \widetilde{p}_i)$. 
Note that
\be\label{eq:deltaqk-}
\delta q^-_{k+1} = D_1(q\circ \psi_{k+1})\delta q^+_k + d_2(q\circ \psi_{k+1})\delta p^+_k
+ d_3(q\circ \psi_{k+1})\delta z^+_k. 
\ee

Recall that
\be\label{eq:qk+}
q^+_k = \exp_{q^-_k}(\widetilde{x}_k),
\ee
implies that
\be\label{eq:deltaqk+}
\delta q^+_k = D_1\operatorname{Exp}(q^-_k, \widetilde{x}_k) \delta q^-_k 
+ d_2\operatorname{Exp}(q^-_k, \widetilde{x}_k) \delta \widetilde{x}_k.
\ee

Since 
\be \label{eq:pk+}
p^+_k = (d\exp_{q^-_k}(\widetilde{x}_k)^*)^{-1}(\widetilde{p}_k) = \Pi_{q^-_k, q(\mu_k(t))}^{q^+_k} (\widetilde{p}_k)
\ee
where $\Pi_{q^-_k, q(\mu_k(t))}^{q^+_k}$ denotes the parallel transport map from $q^-_k$ to $q^+_k$ along $q(\mu_k(t))$,
we have
\be\label{eq:deltapk+}
\delta p^+_k = \Pi_{q^-_k, q(\mu_k(t))}^{q^+_k} \delta \widetilde{p}_k.
\ee

Since
$$
z^+_k - z^-_k = \langle \widetilde{p}_k, \widetilde{x}_k \rangle, \qquad z^-_k - z^+_{k-1} = A_k,
$$
we have
\bea
z^-_k &=& \sum_{i=1}^{k-1} \langle \widetilde{p}_i, \widetilde{x}_i \rangle + \sum_{i=1}^k A_i \label{eq:zk-}\\
z^+_k &=& z^-_k + \langle \widetilde{p}_k, \widetilde{x}_k \rangle 
= \sum_{i=1}^{k} \left(\langle \widetilde{p}_i, \widetilde{x}_i \rangle + A_i\right). \label{eq:zk+}
\eea
In particular, the total generating function satisfies
$$
S=\sum_{k=1}^{N-1} \langle \widetilde{p}_k,\widetilde{x}_k \rangle + \sum_{k=1}^N A_k = z^-_N.
$$
Therefore, by comparing this with \eqref{eq:zk-}, we observe that $z_k^-$ is given by the same expression as $S$, truncated at $k$. This directly implies that $\delta z_k^-$ can be obtained from the formula for $\delta S$ 
(namely, \eqref{eq:deltaS2}) by simply replacing $N$ with $k$. Explicitly, we have
\be\label{eq:deltazk-}
\delta z^-_k = \sum_{i=1}^{k-1} e^{g_{ik}}(\langle \delta \widetilde{p}_i, \widetilde{x}_i \rangle 
- \langle r_i, \delta q^-_i \rangle) + \langle p^-_k, \delta q^-_k \rangle
\ee
where
$$
g_{ik} = \sum_{j=i+1}^k g_{(j-1)j} = g_{\psi_{i+1}}(y^+_i) + \cdots + g_{\psi_{k}}(y^+_{k-1})
$$
and
$$
r_i = D_1\operatorname{Exp}(q^-_i,\widetilde{x}_i)^*(p^+_i) - p^-_i.
$$

By applying these results to \eqref{eq:deltaqk-}, we have
\bea
& &\left(D_1(q\circ \psi_{k+1})d\exp_{q^-_k}(\widetilde{x}_k) 
+ d_3(q\circ \psi_{k+1})\langle \widetilde{p}_k, \cdot \rangle\right) \delta \widetilde{x}_k
\nonumber \\
&=& \delta q^-_{k+1}
- \left(D_1(q\circ \psi_{k+1})D_1\operatorname{Exp}(q^-_k,\widetilde{x}_k) 
+ d_3(q\circ\psi_{k+1})\langle p^-_k, \cdot \rangle\right) \delta q^-_k
\nonumber  \\
& &- \left(d_2(q\circ \psi_{k+1})(d\exp_{q^-_k}(\widetilde{x}_k)^*)^{-1} 
+ d_3(q\circ \psi_{k+1})\langle \cdot, \widetilde{x}_k \rangle \right)\delta \widetilde{p}_k 
\nonumber\\
& &- d_3(q\circ \psi_{k+1}) \sum_{i=1}^{k-1} e^{g_{ik}}(\langle \delta \widetilde{p}_i, \widetilde{x}_i \rangle 
- \langle r_i, \delta q^-_i \rangle) \label{eq:Dkdeltaxk}
\eea
where we put
$$
r_i = D_1\operatorname{Exp}(q^-_i,\widetilde{x}_i)^*(p^+_i) - p^-_i
$$
as in \eqref{eq:ri}.  It is already shown in Sublemma \ref{sublem:Dk-Pi}
that  the linear map
$$
D^k := D_1(q\circ \psi_{k+1})d\exp_{q^-_k}(\widetilde{x}_k) 
+ d_3(q\circ \psi_{k+1})\langle \widetilde{p}_k, \cdot \rangle
$$
introduced in \eqref{eq:Dk} is invertible for any sufficiently large $N$. 

We summarize the above calculations into the following lemma.

\begin{lem}\label{lem:deltaxk}
Provided $N$ is sufficiently large,
we can express $\delta \widetilde{x}_k$  as
\bea\label{eq:deltaxk2-appendix}
\delta \widetilde{x}_k &=& (D^k)^{-1}\Big[\delta q^-_{k+1} 
- \left(D_1(q\circ \psi_{k+1})D_1\operatorname{Exp}(q^-_k,\widetilde{x}_k) 
+ d_3(q\circ\psi_{k+1})\langle p^-_k, \cdot \rangle\right) \delta q^-_k  \nonumber \\
& &- \left(d_2(q\circ \psi_{k+1})(d\exp_{q^-_k}(\widetilde{x}_k)^*)^{-1} 
+ d_3(q\circ \psi_{k+1})\langle \cdot, \widetilde{x}_k \rangle \right)\delta \widetilde{p}_k \nonumber \\
& &- d_3(q\circ \psi_{k+1}) \sum_{i=1}^{k-1} e^{g_{ik}}(\langle \delta \widetilde{p}_i, \widetilde{x}_i \rangle 
- \langle r_i, \delta q^-_i \rangle)\Big]
\eea

\end{lem}

\section{Invertibility of the coordinate change map}
\label{sec:invertibility}

We now verify
the claim at the end of the proof of Proposition~\ref{prop:local-coord} in the following lemma.

\begin{lem}\label{lem:invertibility}
The coordinate change map
$$
\Big((q_k^-,\widetilde p_k)_{k=1}^{N-1},\ q_N^-\Big) \longmapsto (q_0,\tildeX,\tildeP)
$$
is locally invertible, provided $N$ is sufficiently large and $\epsilon_0$ is sufficiently small.
\end{lem}

We work at the level of differentials. Put the input variations in the order
$$
\Big((\delta q_k^-,\delta \widetilde p_k)_{k=1}^{N-1},\ \delta q_N^-\Big)
$$
and consider the collection of outputs
$$
\Big(\delta q_0,\ (D^1(\delta\widetilde x_1),\delta\widetilde p_1),\ \ldots, \
(D^{N-1}(\delta\widetilde x_{N-1}),\delta\widetilde p_{N-1})\Big).
$$
(Here $\delta q_0$ is regarded as $\delta q_1^-$ transported back to $T_{q_0}B$.)
From equation~\eqref{eq:deltakx1}, we obtain an identity of the form
\begin{equation}\label{eq:triangular-recurrence}
D^k(\delta\widetilde x_k)
= \delta q_{k+1}^-
\;+\;(\text{a linear combination of } \delta q_i^-,\,\delta\widetilde p_i \text{ with } i\le k),
\end{equation}
for each $k=1,\ldots,N-1$.
Hence, with respect to the ordering
$$
\left(\delta q_1^-,\delta q_2^-,\delta\widetilde p_1,\delta q_3^-,\delta\widetilde p_2,\ldots,\delta q_N^-\right)
$$
and
$$
\left(\delta q_0,\delta\widetilde p_1,D^1(\delta\widetilde x_1),\delta\widetilde p_2,D^2(\delta\widetilde x_2),
\ldots,D^{N-1}(\delta\widetilde x_{N-1})\right),
$$
the linear map induced by \eqref{eq:triangular-recurrence} is block upper triangular, and
its diagonal blocks are the identity maps on $T_{q_{k+1}^-}B$ (coming from the term $\delta q_{k+1}^-$)
together with the identity on each $\delta\widetilde p_k$.

Recall from Sublemma \ref{sublem:Dk-Pi} that each of
the linear maps $D^k$ is invertible for $k = 1, \cdots, N-1$.
We now pass from $(D^k\delta\widetilde x_k,\delta\widetilde p_k)$ 
to $(\delta\widetilde x_k,\delta\widetilde p_k)$ using the invertibility of $D^k$.
This process is simply the block-diagonal change of variables
\[
(D^k(\delta\widetilde x_k),\delta\widetilde p_k)\ \longmapsto\ (\delta\widetilde x_k,\delta\widetilde p_k),
\qquad
\delta\widetilde x_k = (D^k)^{-1}\left(D^k(\delta\widetilde x_k)\right),
\]
which is invertible since each $D^k$ is invertible 
(for $N$ large and $\epsilon_0$ small).

Finally, by definition, $(\widetilde x_k,\widetilde p_k)$ are obtained from
$(\tildeX_k,\tildeP_k)$ by the parallel transport along a fixed concatenated base path
(depending smoothly on the data). For each fixed base path, parallel transport is a linear isomorphism
between the fibers, hence its differential gives an isomorphism
\[
(\delta\widetilde x,\delta\widetilde p)\ \longleftrightarrow\ (\delta\tildeX,\delta\tildeP).
\]
Therefore the total differential of the coordinate change map is invertible.
This proves that the coordinate change map is locally invertible.

\bibliographystyle{amsalpha}

\bibliography{biblio2}

\end{document}